\theoremstyle{plain}
\newtheorem{thm}{\protect Theorem}[section]
\newtheorem{prop}[thm]{\protect Proposition}
\newtheorem{defn}[thm]{\protect Definition}
\newtheorem{representation}[thm]{\protect Representation}
\newtheorem{lem}[thm]{\protect Lemma}
\newtheorem{rem}[thm]{\protect Remark}
\newtheorem{cor}[thm]{\protect Corollary}
\newcommand{\card}{{\rm Card}\,}
\newcommand{\nsp}{\vspace{-0.1cm}}
\definecolor{linkcolor}{rgb}{0,0,0.502}
\definecolor{urlcolor}{rgb}{1,0,0}
\begin{document}

\title{\textsc{Mean-field limit of generalized \\
Hawkes processes}}
\author{Julien Chevallier\footnote{Corresponding author: {\sf{e-mail: \href{mailto:julien.chevallier@unice.fr}{julien.chevallier@unice.fr}}}}\\
Univ. Nice Sophia Antipolis, CNRS, LJAD, UMR 7351, 06100 Nice, France.
}
\date{}

\maketitle

\begin{abstract}
We generalize multivariate Hawkes processes mainly by including a dependence with respect to the age of the process, i.e. the delay since the last point. 

Within this class, we investigate the limit behaviour, when $n$ goes to infinity, of a system of $n$ mean-field interacting age-dependent Hawkes processes. We prove that such a system can be approximated by independent and identically distributed age dependent point processes interacting with their own mean intensity. This result generalizes the study performed by Delattre, Fournier and Hoffmann in \cite{delattre2016}.

In continuity with \cite{chevallier2015microscopic}, the second goal of this paper is to give a proper link between these generalized Hawkes processes as microscopic models of individual neurons and the age-structured system of partial differential equations introduced by Pakdaman, Perthame and Salort in \cite{pakdaman2010dynamics} as macroscopic model of neurons.
\end{abstract}

\textit{Keywords}: Hawkes process,  mean-field approximation, interacting particle systems, renewal equation, neural network

\smallskip
\textit{Mathematical Subject Classification}: 60G55, 60F05, 60G57, 92B20

\section{Introduction}
In the recent years, the self-exciting point process known as the Hawkes process \cite{hawkes_1971} has been used in very diverse areas. First introduced to model earthquake replicas \cite{kagan_2010} or \cite{ogata_1998} (ETAS model), it has been used in criminology to model burglary \cite{mohler_2011}, in genomic data analysis to model occurrences of genes \cite{gusto_2005,reynaud_2010}, in social networks analysis to model viewing or popularity \cite{bao_2015,crane2008robust}, as well as in finance \cite{bacry_2012,CLT_Hawkes_lineaire}. We refer to \cite{liniger2009multivariate} or \cite{zhu7531nonlinear} for more extensive reviews on applications of Hawkes processes.
A univariate (nonlinear) Hawkes process is a point process $N$ admitting a stochastic intensity of the form
\begin{equation}\label{eq:def:Hawkes:univarie}
\lambda_{t}= \Phi\left( \int_{0}^{t-} h(t-z) N(dz) \right),
\end{equation}
where $\Phi:\mathbb{R}\rightarrow\mathbb{R}_{+}$ is called the intensity function, $h:\mathbb{R}_{+}\rightarrow\mathbb{R}$ is called the self-interaction function (also called exciting function or kernel function in the literature) and $N(dz)$ denotes the point measure associated with $N$. We refer to \cite{torrisi2016gaussian,torrisi2016poisson,zhu_2013,zhu2014process,zhu2015large} for recent papers dealing with nonlinear Hawkes process.

Such a form of the intensity is motivated by practical cases where all the previous points of the process may impact the rate of appearance of a new point. The influence of the past points is formulated in terms of the delay between those past occurrences and the present time, through the weight function $h$.
In the natural framework where $h$ is non-negative and $\Phi$ increasing, this choice of interaction models an excitatory phenomenon: each time the process has a jump, it excites itself in the sense that it increases its intensity and thus the probability of finding a new point. A classical case is the \emph{linear} Hawkes process for which $h$ is non-negative and $\Phi(x) = \mu + x$ where $\mu$ is a positive constant called the spontaneous rate.
Note however that Hawkes processes can also describe inhibitory phenomena. For example, the function $h$ may take negative values, $\Phi$ being the positive part modulo the spontaneous rate $\mu$, i.e. $\Phi(x)=\max(0,\mu+x)$.\\

Multivariate Hawkes processes consist of multivariate point processes $(N^{1},\dots ,N^{n})$ whose intensities are respectively given  for $i=1,\dots ,n$ by
\begin{equation}\label{eq:def:Hawkes:multivarie}
\lambda^{i}_{t}= \Phi_{i}\left( \sum_{j=1}^{n} \int_{0}^{t-} h_{j\to i}(t-z) N^{j}(dz) \right),
\end{equation}
where $\Phi_{i}:\mathbb{R}\rightarrow\mathbb{R}_{+}$ is the intensity function associated with the particle $i$ and $h_{j\to i}$ is the \emph{interaction function} describing the influence of each point of $N^{j}$ on the appearance of a new point onto $N^{i}$, via its intensity $\lambda^{i}$.

When the number of interacting particles is huge (as, for instance, financial or social networks agents), one may be willing to let the number of particles goes to infinity.
This is especially true for multivariate Hawkes processes subject to mean-field interactions. In such a case, we may indeed expect propagation of chaos, namely the particles are expected to become asymptotically independent, provided that they start from independent and identically distributed (i.i.d.) initial conditions and submitted to i.i.d. sources of noise. Mean-field type interactions involve some homogeneity and some symmetry through coefficients that depend upon the empirical measure of the processes: In the limit regime, the coefficients depend upon the common asymptotic distribution of the particles, which satisfies nonlinear dynamics, sometimes called of McKean-Vlasov type.

The study of mean-field situations for Hawkes processes was initiated by Delattre et al. \cite{delattre2016} 
by considering the following form of intensity
\begin{equation}\label{eq:intensity:HDHP}
\lambda^{i}_{t}= \Phi\left( \frac{1}{n} \sum_{j=1}^{n} \int_{0}^{t-}  h(t-z) N^{j}(dz) \right),
\end{equation}
where, in comparison with \eqref{eq:def:Hawkes:multivarie}, all the $\Phi_{i}$'s and the $h_{j\to i}$'s are the same.
In particular, it is shown in \cite{delattre2016} that mean-field interacting Hawkes processes are well approximated, when the size of the network $n$ goes to infinity, by i.i.d. Poisson processes of the McKean-Vlasov type in the sense that their intensity is given by the following implicit formula $\overline{\lambda}(t)=\Phi( \int_{0}^{t}  h(t-z) \overline{\lambda}(z) dz )$.

In the present article, a generalized version of Hawkes processes with mean-field interactions, namely Age Dependent Random Hawkes Processes (ADRHP for short), is studied.
For any point process $N$, we call \emph{predictable age process} associated with $N$ the predictable process $(S_{t-})_{t\geq 0}$ given by
\begin{equation*}
S_{t-}= t-\sup\{T\in N,\, T<t\}, \quad \text{ for all } t>0,
\end{equation*}
and extended by continuity in $t=0$. In particular, its value in $t=0$ is entirely determined by $N\cap\mathbb{R}_-$ and is well-defined as soon as there is a point therein. 
In comparison with the standard mean-field type Hawkes processes studied in \cite{delattre2016} we assume here that the intensity function $\Phi$ in \eqref{eq:intensity:HDHP} (which is denoted by $\Psi$ to avoid confusion) may also depend on the predictable age process $(S^{i}_{t-})_{t\geq 0}$ associated with the point process $N^{i}$, like for instance
\begin{equation}\label{eq:intensity:ADHP}
\lambda^{i}_{t}= \Psi\left( S^{i}_{t-}, \frac{1}{n} \sum_{j=1}^{n} \int_{0}^{t-}  h(t-z) N^{j}(dz) \right).
\end{equation}
This more general choice for the intensity makes the main difference with \cite{delattre2016}, where the intensity is assumed to be of the simpler form \eqref{eq:intensity:HDHP} only. We then show that, instead of Poisson processes of the McKean-Vlasov type, the limit processes associated with mean-field interacting age-dependent Hawkes processes are point processes of the McKean-Vlasov type whose stochastic intensity not only depends on the time but also on the age. More precisely, for the toy example \eqref{eq:intensity:ADHP}, the intensity of the limit process $\overline{N}$ would be given by the following implicit formula $\overline{\lambda}_{t}=\Psi( \overline{S}_{t-}, \int_{0}^{t}  h(t-z) \mathbb{E}\left[ \overline{\lambda}_{z} \right] dz )$ where $(\overline{S}_{t-})_{t\geq 0}$ is the predictable age process associated with $\overline{N}$.\\

Part of our analysis finds its motivation in the use of Hawkes processes for the modelling in neuroscience.
First of all, at a microscopic scale, Hawkes processes are commonly used in theoretical studies \cite{Chornoboy1988,hansen2015lasso,pillow2008spatio,reynaud2014goodness}
to describe the time occurrences of the action potentials of different neurons. These action potentials are associated with brutal changes of the membrane potential, called \emph{spikes} in the rest of the article. 
The motivation for using Hawkes process is well-understood and linked with the \emph{synaptic integration} phenomenon: the interaction functions $h_{j \rightarrow i}$ describe the fact that, whenever a neuron spikes, the membrane potential of the other neurons in the network (and thus their firing rate as well) may change.
In that sense, the $L^1$ norm of the interaction function $h_{j \rightarrow i}$, for $j \not = i$, is the analogue of the synaptic weight of neuron $j$ over neuron $i$, that is the strength of the influence of neuron $j$ over neuron $i$ through their synaptic connection. For example, if one considers $h_{j\to i}=\alpha_{j\to i}h$ for a fixed function $h$ then $\alpha_{j\to i}$ represents the (relative) synaptic weight of neuron $j$ over neuron $i$. Notice that in the present paper we allow the functions $h_{j \rightarrow i}$ to be random and thus the synaptic weights to be random as well (as in \cite{faugeras2009constructive} for instance).

To model a transition in the behaviour of the network at the shifting time $t=0$, the distribution of $N\cap\mathbb{R}_-$ is considered as an initial condition of the dynamics of the point process and may be different from the distribution of a Hawkes process. Therefore, to specify the dependence of the dynamics (on the positive times) upon the initial condition, the following form of intensity can be considered:
\begin{equation}\label{eq:def:Hawkes:passe}
\lambda_{t}= \Phi\left( \int_{-\infty}^{t-} h(t-z) N(dz) \right) = \Phi\left( \int_{0}^{t-} h(t-z) N(dz) + F(t) \right),
\end{equation}
where $F(t):=\int_{-\infty}^{0} h(t-z) N(dz)$ models, in a Hawkes manner, the influence of the initial condition\footnote{Remark that the integral is performed over $(-\infty,0]$. In particular, it is possible that $0\in N$ with positive probability depending on the choice of the initial condition (namely the distribution of $N\cap\mathbb{R}_-$).}. This choice of $F$ is taken from \cite{chevallier2015microscopic}. However, other choices are conceivable. For example, more general functions $F$ may describe a stimulus at a given time $t_0<0$ which is more convenient for peristimulus analyses like \cite{Pouzat2009STAR}.

However, standard Hawkes processes fail to model, in a convenient way, the neurophysiological constraint known as \emph{refractory period}, that is the fact that a neuron cannot spike twice in a too short delay. This is the main reason why we allow the intensity of the Hawkes process to depend upon the age in the present study. In comparison with \eqref{eq:def:Hawkes:univarie}, one may represent strict refractory period by considering, for instance, the following form of intensity:
\begin{equation}\label{eq:def:Hawkes:refractory:period}
\lambda_{t}= \Phi\left( \int_{0}^{t-} h(t-z) N(dz) \right)\mathds{1}_{S_{t-}\geq \delta},
\end{equation}
where $(S_{t-})_{t\geq 0}$ is the predictable age process associated with $N$ and $\delta$ is a parameter corresponding to the time length of the strict refractory period of a neuron.
This sounds as an alternative to the strategy used in \cite{chevallier2015detection}. Therein, refractory periods are described by choosing, in the standard formulation of Hawkes processes, strongly negative self-interaction functions at a very short range. The strategy used in the present article is more flexible: synaptic integration and refractory period involve different aspects of the physiology of a neuron and so we prefer to describe each of them by different elements in the modelling.\\

Mean-field approaches have been already used to pass from a microscopic to a macroscopic description of neural networks. Taking for granted that the network is symmetric enough, the mean-field modelling sounds quite fair. Indeed neural networks admit a large number of vertices and are highly connected (see \cite{faugeras2009constructive} where the mean-field approximation of the columns structure in the visual cortex is discussed or \cite{bojak2010connecting} where mean-field models are related to experimental data recorded at a macroscopic scale). One may distinguish three types of models: intrinsically spike generating models (like the FitzHugh–Nagumo model \cite{luccon2014mean}), threshold spike generating models (like the integrate-and-fire model \cite{brunel1999fast,delarue2015global,delarue2015particle}) and point processes models (\cite{fournier2014toy} or \cite{galves2015modeling,hodara2014}).\\

As usual with McKean-Vlasov dynamics, the asymptotic evolution (when $n$ goes to infinity) of the distribution of the population at hand can be described as the solution of a nonlinear partial differential equation (PDE).
In the present article, the candidate to describe the dynamics at a macroscopic level is the following age structured system of nonlinear PDEs studied by Pakdaman, Perthame and Salort in a series of articles \cite{pakdaman2010dynamics,pakdaman2013relaxation,pakdaman2014adaptation}.
\begin{equation}\label{eq:PPS:intro} \tag{PPS}
\begin{cases}
\displaystyle \frac{\partial n\left(s,t\right)}{\partial t}+
\frac{\partial n\left(s,t\right)}{\partial s}+
p\left(s,X\left(t\right)\right)n\left(s,t\right)=0,\\
\displaystyle m\left(t\right):=n\left(0,t\right)=
\int_{0}^{+\infty}p\left(s,X\left(t\right)\right)n\left(s,t\right)ds.
\end{cases}
\end{equation}
Here, $n(s,t)$ represents the probability density of the age $s$ of a neuron at time $t$ where the age of a neuron is the delay since its last spike. Of course, the definition of the age of a neuron fits with the definition of the age associated with a point process as soon as the spike train is modelled by a point process.
The function $p$ represents the firing rate which may depend on the age $s$. As already explained, this dependence describes for instance the phenomenon of refractory period (e.g. $p(s,x)=\mathds{1}_{s\geq \delta}$ for some $\delta>0$). The function $p$ may also depend on the global activity of the network which is denoted by $X(t):=\int_0^t d(z) n(0,t-z) dz$ where $d$ is some delay function.
This global (deterministic) variable $X(t)$ corresponds to the mean of the integral that appears in \eqref{eq:def:Hawkes:univarie}. This correspondence forms the basis of the previous work \cite{chevallier2015microscopic} where a bridge is made between a modified version of \eqref{eq:PPS:intro} and the distribution of the age of a single neuron (modelled by a point process). From a neural network point of view, this distribution can of course be recovered as the limit of the empirical distribution associated with a network of i.i.d. neurons.

The study of the link between the \eqref{eq:PPS:intro} system and a mean-field interacting neural network (modelled by point processes) was left as an open question in \cite{chevallier2015microscopic}. The heuristic of this mean-field interpretation comes from the specific structure of the variable $X(t)$ which brings out a non-linearity of the McKean-Vlasov type. One of the main purpose of the present paper is to answer that left open question.
To be precise, this kind of study is performed in a preliminary work \cite{quininao2015microscopic} for a firing rate $p$ that is continuous and non-decreasing in both variables and under Markovian assumptions. Transposed to the Hawkes framework, this last point corresponds to interaction functions of the form $h_{j\to i}(t)=e^{-\beta(t-\tau_{j})}\mathds{1}_{[\tau_{j},+\infty)}(t)$ where $\beta$ is a constant and the $\tau_{j}$'s are i.i.d. random variables describing the propagation time of the signal from the neuron to the network.
The convergence of the empirical measure is discussed in \cite{quininao2015microscopic} when $p$ is continuous only but without any rate of convergence.
In the present study, rates of convergence are given for non Markovian Hawkes processes (that is non necessary exponential interaction functions) as well as for firing rates that are discontinuous with respect to the age, like \eqref{eq:def:Hawkes:refractory:period} for instance. However, we make the crucial assumption that the firing rate $p$ is Lipschitz continuous with respect to the second variable.\\

To sum up, we call Age Dependent Random Hawkes Process (ADRHP) a multivariate age dependent Hawkes process (like \eqref{eq:def:Hawkes:refractory:period} for instance) with some general dependence with respect to the initial condition~\eqref{eq:def:Hawkes:passe} and with some randomness regarding the interaction functions $h_{j\to i}$.
This article has two main purposes: extend the mean-field approximation obtained in \cite{delattre2016} to this generalization of Hawkes processes and establish a proper link between the microscopic modelling of individual neurons given by a $n$-particle system of mean-field interacting age-dependent Hawkes processes (like \eqref{eq:intensity:ADHP} for instance) and the macroscopic modelling given by the \eqref{eq:PPS:intro} system. 

The paper is organized as follows. In section \ref{sec:ADRHP}, we introduce ADRHPs and we show how to represent them as solutions of an SDE driven by a Poisson noise. 
As a by-product of this representation, we get, on the one hand, the existence of such processes, and on the other hand, an efficient way to get a coupling between our $n$-particle system and $n$ i.i.d. limit processes.
As a first step towards the mean-field approximation, the limit dynamics is studied in Section \ref{sec:study:limit:dynamics}. Existence and uniqueness of a solution of the \eqref{eq:PPS:intro} system, which is our candidate to drive the limit dynamics, are proved in Theorem \ref{thm:existence:uniqueness:PPS}. 
As a consequence, we get the existence of point processes of the McKean-Vlasov type whose intensity depends on both the time and the age.
In Section \ref{sec:mean:field}, these processes are proved to be the mean-field approximation of age dependent random Hawkes processes (Theorem \ref{thm:counting:process:convergence} and Corollary \ref{cor:mean:field:approx:counting:and:age:process}) using coupling arguments under either of the two following main assumptions: the intensity is bounded or the intensity does not depend on the age. 
Notice that even when the intensity does not depend on the age, the results presented here extend the ones given in \cite{delattre2016} since random interaction functions $h_{j\to i}$ as well as dependences with respect to the dynamics before time $0$ cannot be taken into account in \cite{delattre2016}.
Finally, the link between age dependent random Hawkes processes and the \eqref{eq:PPS:intro} system is given by Corollary \ref{cor:mean:field:approx:counting:and:age:process}. For sake of readability, most of the computations and technical lemmas are given in two appendices.

\paragraph{General notations}
\begin{itemize}
\item The space of continuous function from $E$ to $\mathbb{R}$ is denoted by $\mathcal{C}(E)$.
\item The space of Radon (resp. probability) measures on $E$ is denoted by $\mathcal{M}(E)$ (resp. $\mathcal{P}(E)$).
\item For $\nu$ in $\mathcal{P}(E)$, $X\sim \nu$ means that $X$ is a random variable distributed according to $\nu$.
\item For $f:E\rightarrow \mathbb{R}$, $|| f ||_{1}$, $|| f ||_{2}$ and $|| f ||_{\infty}$ respectively denote the $L^{1}$, $L^{2}$ and $L^{\infty}$ norms of $f$.
\end{itemize}

\section{Age dependent random Hawkes processes}
\label{sec:ADRHP}
In all the sequel, we focus on locally finite simple point processes, $N$, on $(\mathbb{R},\mathcal{B}(\mathbb{R}))$ that are random countable sets of points of $\mathbb{R}$ such that for any bounded measurable set $K\subset \mathbb{R}$, the number of points in $N\cap K$ is finite almost surely (a.s.). The associated points define an ordered sequence of points $(T_{n})_{n\in \mathbb{Z}}$.
For a measurable set $A$, $N(A)$ denotes the number of points of $N$ in $A$. We are interested in the behaviour of $N$ on $(0,+\infty)$ ($N\cap \mathbb{R}_{-}$ is regarded as an initial condition) and we denote $t\in\mathbb{R}_{+} \mapsto N_t:=N((0,t])$ the associated counting process. Furthermore, the point measure associated with $N$ is denoted by $N(dt)$. In particular, for any non-negative measurable function $f$, $\int_{\mathbb{R}} f(t) N(dt) = \sum_{i\in \mathbb{Z}} f(T_{i})$. For any point process $N$, we call \emph{age process} associated with $N$ the process $(S_{t})_{t\geq 0}$ given by
\begin{equation}\label{eq:def:age:process}
S_{t}= t-\sup\{T\in N,\, T\leq t\}, \quad \text{ for all } t\geq 0.
\end{equation}
In comparison with the age process, we call \emph{predictable age process} associated with $N$ the predictable process $(S_{t-})_{t\geq 0}$ given by
\begin{equation}\label{eq:def:age:process:predictable}
S_{t-}= t-\sup\{T\in N,\, T<t\}, \quad \text{ for all } t>0,
\end{equation}
and extended by continuity in $t=0$.

We work on a filtered probability space $(\Omega,\mathcal{F},(\mathcal{F}_{t})_{t\geq 0},\mathbb{P})$ and suppose that the canonical filtration associated with $N$, namely $(\mathcal{F}_{t}^{N})_{t\geq 0}$ defined by $\mathcal{F}_{t}^{N}:=\sigma(N\cap (-\infty,t))$, is such that for all $t\geq 0$, $\mathcal{F}_{t}^{N}\subset	\mathcal{F}_{t}$. 
Let us denote $\mathbb{F}:=(\mathcal{F}_{t})_{t\geq 0}$.
We call $\mathbb{F}$-(predictable) intensity of $N$ any non-negative $\mathbb{F}$-predictable process $(\lambda_{t})_{t\geq 0}$ such that $(N_{t}-\int_{0}^{t} \lambda_{s}ds)_{t\geq 0}$ is an $\mathbb{F}$-local martingale. Informally, $\lambda_{t}dt$ represents the probability that the process $N$ has a new point in $[t,t+dt]$ given $\mathcal{F}_{t-}$. Under some assumptions that are supposed here, this intensity process exists, is essentially unique and characterizes the point process (see \cite{Bremaud_PP} for more insight).. In particular, since $N$ admits an intensity, for any $t\geq 0$, the probability that $t$ belongs to $N$ is null. Moreover, notice the following properties satisfied by the age processes:
\begin{itemize}
\item the two age processes are equal for all $t\geq 0$ except the positive times $T$ in $N$ (almost surely a set of null measure in $\mathbb{R}_{+}$),
\item for any $t\geq 0$, $S_{t-}=S_{t}$ almost surely (since $N$ admits an intensity),
\item and the value $S_{0-}=S_{0}$ is entirely determined by $N\cap\mathbb{R}_-$ and is well-defined as soon as there is a point therein.
\end{itemize}
\medskip

In analogy with the study of the dynamics of a variable over time, we use a dichotomy between the behaviour of the point process before time $0$ (which is treated as an initial condition) and its behaviour after time $0$ (which is supposed to admit a ``Hawkes type'' intensity). For every point process $N$, we denote $N_{-}=N\cap\mathbb{R}_-$ and $N_{+}=N\cap (0,+\infty)$. In the rest of the paper, a point process on $\mathbb{R}$ is characterized by:
\begin{enumerate}
\item the distribution of $N_{-}$, namely $\zeta_{N_{-}}$, which gives the dynamics of $N$ on $\mathbb{R}_{-}$;
\item the $\mathbb{F}$-predictable intensity $\lambda_{t}$, which gives the dynamics of $N$ on $(0,+\infty)$.
\end{enumerate}
In particular, $\zeta_{N_{-}}$ characterizes the distribution of $T_{0}$ that is the last point (spike) before time $0$.
Notice that the $\sigma$-algebra $\mathcal{F}_{0}$ is such that $N_{-}$ is $\mathcal{F}_{0}$-measurable.\\

\subsection{Parameters of the model}

The definition of an \emph{age dependent random Hawkes process} (ADRHP) is given bellow, but let us first introduce the parameters of the model:

$\bullet$ a positive integer $n$ which is the number of particles (e.g. neurons) in the network (for $i=1,\dots ,n$, $N^{i}$ represents the occurrences of the events (e.g. spikes) associated with the particle $i$);

 $\bullet$ a distribution $\zeta_{N_{-}}$ determining the initial conditions $(N_{-}^{i})_{i=1,..,n}$ which are i.i.d. point processes on $\mathbb{R}_{-}$ distributed according to $\zeta_{N_{-}}$;

 $\bullet$ a distribution $\mu_{H}$ determining the matrix of interaction functions $\mathbf{H}=(H_{ij})_{1\leq i,j\leq n}$ where $H_{ij}:\mathbb{R}_{+}\to \mathbb{R}$ are $\mathcal{F}_{0}$-measurable random functions distributed according to $\mu_{H}$ such that
\begin{equation}\label{eq:indep:Hij}
{\small
\begin{cases}
\text{for any fixed $i=1,\dots ,n$, the variables $H_{i1},\dots ,H_{in}$ are independent,}\\
\text{the vectors $(H_{i1},\dots ,H_{in})$ are exchangeable (with respect to $i$),}\\
\text{the matrix {\bf H} is independent from the initial conditions $(N_{-}^{i})_{i=1,..,n}$;}
\end{cases}
}
\end{equation}

 $\bullet$ a distribution $\nu_{F}$ determining the matrix of functions $\mathbf{F}=(F_{ij})_{1\leq i,j\leq n}$ where $F_{ij}:\mathbb{R}_{+}\to \mathbb{R}$ are $\mathcal{F}_{0}$-measurable random functions distributed according to $\nu_{F}$ such that
\begin{equation}\label{eq:indep:Fij}
{\small
\begin{cases}
\text{for any fixed $i=1,\dots ,n$, the variables $F_{i1},\dots ,F_{in}$ are independent,}\\
\text{the vectors $(F_{i1},\dots ,F_{in})$ are exchangeable (with respect to $i$);}
\end{cases}
}
\end{equation}

 $\bullet$ an intensity function $\Psi:\mathbb{R}_{+}\times \mathbb{R} \rightarrow \mathbb{R}_{+}$.

Note that the functions $H_{ij}$'s can in particular be equal to a given deterministic function $h$ which corresponds to more standard Hawkes processes.

\begin{rem}
\textbf{\emph{On the exchangeability.}} Assumptions \eqref{eq:indep:Hij} and \eqref{eq:indep:Fij} mean that a single particle receives i.i.d. interactions from its neighbours and that the particles are exchangeable: one can permute the particles without modifying their joint distribution.
\medskip

\noindent \textbf{\emph{On the synaptic weights.}} The link between synaptic weights (that is the strength with which one given neuron influences an other one)  and interaction functions can be well emphasized by the following choice of interaction functions.
Consider a fixed function $h:\mathbb{R}_{+}\to \mathbb{R}$ and, independently of everything else, a sequence $(\alpha_{j})_{j=1,\dots ,n}$ of i.i.d. random variables with values in $[0,1]$. Then, $(H_{ij})_{1\leq i,j\leq n}$ defined by $H_{ij}=\alpha_{j}h$ satisfies \eqref{eq:indep:Hij}. The $\alpha_{j}$'s represent the (relative) synaptic weight of neuron $j$ over all the other ones.

The interaction functions, even if they are random, are fixed at time $0$. The dynamics of synaptic weights is not taken into account here.
\medskip

\noindent \textbf{\emph{On the initial condition.}} Unlike the matrix ${\bf H}$, the matrix ${\bf F}$ can depend on the initial conditions as it can be seen in the following particular case which is derived from \eqref{eq:def:Hawkes:passe} for instance. For the same matrix ${\bf H}=(H_{ij})_{1\leq i,j\leq n}$ as in \eqref{eq:indep:Hij}, we may choose for all $1\leq i,j\leq n$, the function $F_{ij}:\mathbb{R}_{+}\rightarrow \mathbb{R}$ defined for all $t\geq0$ by
\begin{equation}\label{eq:def:F0}
F_{ij}(t)= \int_{-\infty}^{0} H_{ij}(t-z) N^{j}_{-}(dz).
\end{equation}
These random functions are $\mathcal{F}_{0}$-measurable and they satisfy the first two lines of \eqref{eq:indep:Fij} thanks to the independence of the $H_{ij}$'s and the $N^{j}_{-}$'s. Hence, one can consider the intensity given by \eqref{eq:def:ADRHP:intensity} with such a choice of $F$ to represent the contribution of the processes $(N^{i}_{-})_{i=1,\dots ,n}$ to the dynamics after time $0$. In this example, the $F_{ij}$'s are obviously dependent from the $N^{j}_{-}$'s.
\medskip

\noindent \textbf{\emph{On the post-stimulus study.}} In the case of neurons modelling, one can model external inputs via the functions $F_{ij}$. For example, one could take $F_{ij}=H_{ij}(t-\tau)$ where $\tau$ is some non-positive real number that may be random (independent of anything else) modelling that all the neurons have spiked at the same time $\tau<0$ thanks to a common stimulus.
\end{rem}

\subsection{Definition via the intensity}

The definition of an age dependent random Hawkes process is given by providing the form of its intensity.

\begin{defn}\label{def:Hawkes:intensity}
\begin{sloppypar}
An age dependent random Hawkes process (ADRHP) with parameters $(n,\mu_{H},\nu_{F},\Psi,\zeta_{N_{-}})$ is a family $(N^{i})_{i=1,..,n}$ of point processes on $\mathbb{R}$ such that $(N_{-}^{i})_{i=1,..,n}$ is a family of i.i.d. point processes on $\mathbb{R}_{-}$ distributed according to $\zeta_{N_{-}}$ and $(N_{+}^{i})_{i=1,..,n}$ is a family of point processes on $\mathbb{R}_{+}$ with $\mathbb{F}$-intensity given for all $i=1,\dots ,n$ by
\end{sloppypar}
\begin{equation}\label{eq:def:ADRHP:intensity}
\lambda^{i}_{t}=\Psi\left(S^{i}_{t-}, \frac{1}{n} \sum_{j=1}^{n} \left( \int_{0}^{t-} H_{ij}(t-z) N_{+}^{j}(dz) + F_{ij}(t) \right) \right) ,
\end{equation}
\begin{sloppypar}
where $(S^{i}_{t-})_{t\geq 0}$ is the predictable age process associated with $N^{i}$ defined in \eqref{eq:def:age:process:predictable} and $(H_{ij})_{1\leq i,j\leq n}$ (respectively $(F_{ij})_{1\leq i,j\leq n}$) is a random matrix with entries distributed according to $\mu_{H}$ (resp. $\nu_{F}$) and satisfying \eqref{eq:indep:Hij} (resp. \eqref{eq:indep:Fij}).
\end{sloppypar}
\end{defn}

Notice that the intensities depend on the predictable age processes and not the standard ones since an intensity process must be predictable. Furthermore, since the auto-interaction given by $H_{ii}$ is scaled by $1/n$, it vanishes when $n$ goes to infinity and so the asymptotic behaviour  proved in this article (Corollary \ref{cor:mean:field:approx:counting:and:age:process}) remains the same if one assumes that $H_{ii}=0$.

An age dependent random Hawkes process admits two different behaviours:
\begin{enumerate}
\item before time $0$, the processes $(N^{i}_{-})_{i=1,\dots ,n}$ are independent and identically distributed;
\item after time $0$, the processes $(N^{i}_{+})_{i=1,\dots ,n}$ are dependent (in general) and driven by their respective intensities which can be different from one process to another. 
\end{enumerate}

The dichotomy of behaviours can model a change of regime at time $t=0$. It should be interesting to see whether the results could be extended to initial conditions given by a mean-field dynamics and not necessarily i.i.d. ones. However, it is not in the scope of this article. 

\begin{rem}
\textbf{\emph{On the randomness.}} Given $\mathcal{F}_{0}$, the  randomness of $\lambda^{i}_{t}$ in Equation~\eqref{eq:def:ADRHP:intensity} only lies in the point measures $N_{+}^{j}(dz)$ and the predictable age process $(S^{i}_{t-})_{t\geq 0}$. These intensities, and so the point processes, are not exchangeable given $\mathcal{F}_{0}$. However, they are exchangeable when they are considered with respect to all the randomness (including the $N^{i}_{-}$'s, $H_{ij}$'s and $F_{ij}$'s).
\medskip

\noindent \textbf{\emph{Particular case for $\Psi$.}} As presented in the introduction (see Equation \eqref{eq:def:Hawkes:refractory:period}), a particular case we have in mind in this study is when there exists a function $\Phi:\mathbb{R}\to \mathbb{R}_{+}$ and a non-negative real number $\delta$ such that 
\begin{equation}\label{eq:example:HPRP}
\Psi(s,x)=\Phi(x)\mathds{1}_{s\geq \delta}.
\end{equation}
This particular choice of $\Psi$ provides an interesting modelling of the strict refractory period of a neuron. Furthermore, when $\delta=0$, there is no refractory period and one recovers more standard Hawkes processes. In particular, if $\mu_{H}$ is the Dirac mass located at some fixed function $h$ and  $\nu_{F}$ is the Dirac mass located at the null function, then one recovers the Hawkes processes studied in \cite{delattre2016}. Remark that the exchangeability of the Hawkes processes studied in \cite{delattre2016} is obvious since they have the same intensity at each time $t$.
\end{rem}

\subsection{List of assumptions}

In the present article, several assumptions on the parameters of the model are used depending on the context. For sake of simplicity, all these assumptions are gathered here.
\paragraph{Main assumptions}\ 
\smallskip

\newlength{\asslabel}
\setlength{\asslabel}{1.6cm}
\newlength{\assdef}
\setlength{\assdef}{11.2cm}
\newcommand{\spacenot}{\vspace{-0.2cm}}
\noindent
\begin{tabular}{>{\centering}p{\asslabel}|p{\assdef}}
\phantomsection
\label{ass:initial:condition:density} 
\spacenot $\left(\mathcal{A}^{\zeta_{N_{-}}}_{u^{\rm in}}\right)$:   & \textbf{Age at time $0$ admits a bounded probability density.}

If $N_{-}$ is distributed according to $\zeta_{N_{-}}$ ($N_{-}\sim \zeta_{N_{-}}$) and $T_{0}$ denotes the closest point of $N_{-}$ to 0, then $-T_{0}$ admits a density with respect to the Lebesgue measure denoted by $u^{\rm in}$ (``in'' stands for ``initial''). Furthermore, $u^{\rm in}$ is uniformly bounded.
\end{tabular}

\bigskip
\noindent
\begin{tabular}{>{\centering}p{\asslabel}|p{\assdef}}
\phantomsection
\label{ass:mu:H:infty} 
\spacenot $\left(\mathcal{A}^{\mu_{H}}_{\infty}\right)$:      & 	\textbf{Interaction functions: local integrability.}

If $H\sim \mu_{H}$, then there exists a deterministic function $G:\mathbb{R}_{+}\to \mathbb{R}$ such that a.s., for all $t\geq 0$, $|H(t)|\leq G(t)$.
The smallest possible deterministic function $G$, denoted by $M_{\mu_{H}}$, is moreover supposed to be locally integrable. In particular, $\mathbb{E}\left[ H(t) \right]$ is well-defined and we let $m_{\mu_{H}}(t):=\mathbb{E}\left[ H(t) \right]$.
\end{tabular}

\bigskip
\noindent
\begin{tabular}{>{\centering}p{\asslabel}|p{\assdef}}
\phantomsection
\label{ass:nu:F:locally:bounded}
\spacenot $\left(\mathcal{A}^{\nu_{F}}_{1}\right)$:      & \textbf{Expectation of the functions $F_{ij}$.}

If $F\sim\nu_{F}$, then $t\in\mathbb{R}_{+}\mapsto \mathbb{E}\left[ |F(t)| \right]$ is locally bounded. In particular, for all $t\geq 0$,  $\mathbb{E}\left[ F(t) \right]$ is well-defined and we let $m_{\nu_{F}}(t):=\mathbb{E}\left[ F(t) \right]$.
\end{tabular}

\bigskip
\noindent
\begin{tabular}{>{\centering}p{\asslabel}|p{\assdef}}
\phantomsection
\label{ass:Psi:Lipschitz}
\spacenot $\left(\mathcal{A}^{\Psi}_{\rm Lip}\right)$:      & \textbf{Lipschitz continuity.}

The function $\Psi:\mathbb{R}_{+}\times \mathbb{R}\to\mathbb{R}_{+}$ is uniformly Lipschitz continuous with respect to the second coordinate:
there exists a constant $C>0$ such that, for all $s\geq 0$, the function $x\mapsto \Psi(s,x)$ is Lipschitz with constant $C$. The smallest constant $C$ is denoted by ${\rm Lip}(\Psi)$.
 Furthermore, $s\in\mathbb{R}_{+} \mapsto \Psi(s,0)$ is uniformly bounded.
\end{tabular}

\smallskip
\noindent
\begin{tabular}{>{\centering}p{\asslabel}|p{\assdef}}
\phantomsection
\label{ass:Psi:uniformly:bounded}
\spacenot $\left(\mathcal{A}^{\Psi}_{\infty}\right)$:      & \textbf{Uniformly bounded.}

The function $\Psi$ is uniformly bounded, that is $|| \Psi ||_{\infty}<+\infty$.
\end{tabular}

\smallskip
\noindent
\begin{tabular}{>{\centering}p{\asslabel}|p{\assdef}}
\phantomsection
\label{ass:Psi:=Psi0}
\spacenot $\left(\mathcal{A}_{\Psi=\Psi_{0}}\right)$:      & \textbf{The intensity does not depend on the age.}

There exists a function $\Psi_{0}:\mathbb{R}\to \mathbb{R}_{+}$ such that, for all $s\geq 0$, $\Psi(s,\cdot)=\Psi_{0}(\cdot)$. In this case, if (\hyperref[ass:Psi:Lipschitz]{$\mathcal{A}^{\Psi}_{\rm Lip}$}) is satisfied then ${\rm Lip}(\Psi)$ is rather denoted by ${\rm Lip}(\Psi_{0})$.
\end{tabular}

\paragraph{Additional assumptions}\ 
\smallskip

\noindent
\begin{tabular}{>{\centering}p{\asslabel}|p{\assdef}}
\phantomsection
\label{ass:initial:condition:bounded}
\spacenot $\left(\mathcal{A}^{\zeta_{N_{-}}}_{\infty}\right)$:     & \textbf{The age at time $0$ is bounded.}

If $N_{-}\sim \zeta_{N_{-}}$ and $T_{0}$ denotes the closest point of $N_{-}$ to 0, then $-T_{0}$ is upper bounded a.s. that is there exists a constant $C>0$ such that $-T_{0}\leq C$ a.s. The smallest possible constant $C$ is denoted by $M_{T_{0}}$.
\end{tabular}

\bigskip
\noindent
\begin{tabular}{>{\centering}p{\asslabel}|p{\assdef}}
\phantomsection
\label{ass:mu:H:locally:square:integrable}
\spacenot $\left(\mathcal{A}^{\mu_{H}}_{\infty,2}\right)$:    & \textbf{Interaction functions: square local integrability.}

(\hyperref[ass:mu:H:infty]{$\mathcal{A}^{\mu_{H}}_{\infty}$}) is satisfied and $M_{\mu_{H}}$ is furthermore locally square integrable.
\end{tabular}

\bigskip
\noindent
\begin{tabular}{>{\centering}p{\asslabel}|p{\assdef}}
\phantomsection
\label{ass:nu:F:variance}
\spacenot $\left(\mathcal{A}^{\nu_{F}}_{2}\right)$:      & \textbf{Variance of the functions $F_{ij}$.}

(\hyperref[ass:nu:F:locally:bounded]{$\mathcal{A}^{\nu_{F}}_{1}$}) is satisfied and if $F\sim\nu_{F}$, then for all $t\geq 0$, $F(t)$ admits a variance denoted by $V_{\nu_{F}}(t)$ satisfying that for all $t\geq 0$, $\int_{0}^{t} V_{\nu_{F}}(t')^{1/2} dt'<+\infty$. Furthermore, $m_{\nu_{F}}$ is a continuous function.
\end{tabular}

\medskip

\noindent Notice that:
\begin{itemize}
\item Assumptions (\hyperref[ass:mu:H:infty]{$\mathcal{A}^{\mu_{H}}_{\infty}$}), (\hyperref[ass:nu:F:locally:bounded]{$\mathcal{A}^{\nu_{F}}_{1}$}) and (\hyperref[ass:Psi:Lipschitz]{$\mathcal{A}^{\Psi}_{\rm Lip}$}) are used to prove the existence of the $n$-particle system.
\item Assumptions (\hyperref[ass:mu:H:locally:square:integrable]{$\mathcal{A}^{\mu_{H}}_{\infty,2}$}), (\hyperref[ass:nu:F:variance]{$\mathcal{A}^{\nu_{F}}_{2}$}) and (\hyperref[ass:Psi:Lipschitz]{$\mathcal{A}^{\Psi}_{\rm Lip}$}) are used to prove the mean-field approximation under the addition of either (\hyperref[ass:Psi:uniformly:bounded]{$\mathcal{A}^{\Psi}_{\infty}$}) and (\hyperref[ass:initial:condition:density]{$\mathcal{A}^{\zeta_{N_{-}}}_{u^{\rm in}}$})
or (\hyperref[ass:Psi:=Psi0]{$\mathcal{A}_{\Psi=\Psi_{0}}$}).
\end{itemize}

\begin{rem}
There are two strong assumptions regarding the intensity function $\Psi$: Lipschitz continuity and boundedness. The continuity assumption is rather standard to prove existence of non-explosion of Hawkes processes and the mean-field approximation by the coupling method. The boundedness assumption is (up to our knowledge) necessary to prove the results stated in Section \ref{sec:study:limit:dynamics} (granting the well-posedness of the limit process). Nevertheless, once this well-posedness is given, we could relax the uniform boundedness assumption to a local boundedness one, provided that some a priori bounds hold for the two variables of $\Psi$: the limit age $\overline{S}_{t-}$ and the limit interaction variable $\int_{0}^{t} h(t-z) \overline{\lambda}(z)dz + f_{0}(t)$ -- see Equation \eqref{eq:mckean:vlasov:intensity}. The interested reader is referred to \cite{quininao2015microscopic} where this kind of argument is used.
\end{rem}

\subsection{Representation via a stochastic differential equation}

Definition \ref{def:Hawkes:intensity} describes ADRHPs as weak solutions. It characterizes their distribution but not their path-wise dynamics. As it is well emphasized in \cite{massoulie_1998}, point processes can be either represented as weak solutions thanks to their stochastic intensity or represented as strong solutions of a stochastic differential equation (SDE) driven by Poisson noise. The idea to represent point processes as \emph{strong} solutions of SDEs driven by Poisson measures was first introduced by Lewis and Shedler \cite{Lewis_Simul}, for inhomogeneous Poisson process, and extended by Ogata \cite{ogata1981on} (thinning procedure) under some weak assumptions on the intensity. 
It says that, if $N$ admits $(\lambda_{t})_{t\geq 0}$ as a $\mathbb{F}$-predictable intensity, then the point measure associated with $N$ can be represented by $N(dt)=\Pi(dt\times [0,\lambda_{t}])$ where $\Pi$ is a Poisson measure with intensity $1$ on $\mathbb{R}_{+}^{2}$.
This has been used to show existence or stability results for some classes of point processes by Br\'emaud and Massouli\'e in \cite{Bre_Massou} or \cite{massoulie_1998} and more recently to exhibit some suitable coupling between interacting Hawkes processes and their mean-field approximation in \cite{delattre2016}.
We introduce here the representation of ADRHPs based on such a thinning procedure.

\begin{representation}\label{def:Hawkes:Thinning}
Let $(N_{-}^{i})_{i\geq 1}$ be some i.i.d. point processes on $\mathbb{R}_{-}$ distributed according to $\zeta_{N_{-}}$.
Let $(H_{ij})_{1\leq i,j\leq n}$ (respectively $(F_{ij})_{1\leq i,j\leq n}$) be a random matrix with entries distributed according to $\mu_{H}$ (resp. $\nu_{F}$) and satisfying \eqref{eq:indep:Hij} (resp. \eqref{eq:indep:Fij}).
Let $(\Pi^{i}(dt,dx))_{i\geq 1}$ be some i.i.d. $\mathbb{F}$-Poisson measures with intensity $1$ on $\mathbb{R}_{+}^{2}$.

Let $(N_{t}^i)^{i=1,..,n}_{t \geq 0}$ be a family of counting processes such that, for $i=1,..,n$ and all $t\geq 0$,
\begin{equation} \label{eq:Hawkes:Thinning}
N_{t}^i= \int_0^t \int_0^\infty \mathds{1}_{ \left\{ x\leq \Psi\left( S^{i}_{t'-}, \frac{1}{n} \sum_{j=1}^{n} ( \int_{0}^{t'-} H_{ij}(t'-z) N_{+}^{j}(dz) + F_{ij}(t') ) \right) \right\}} \, \Pi^i(dt',dx),
\end{equation}
where $(S^{i}_{t-})_{t\geq 0}$ is the predictable age process associated with $N^{i}=N_{-}^{i}\cup N_{+}^{i}$ and $N_{+}^{i}$ is the point process associated with the counting process $(N_{t}^i)_{ t \geq 0}$. Then, $(N^{i})_{i=1,..,n}$ is an age dependent random Hawkes process with parameters $(n,\mu_{H},\nu_{F},\Psi,\zeta_{N_{-}})$.
\end{representation}

This representation is mainly used in this paper in order to provide a suitable coupling between ADRHPs and i.i.d. point processes describing the mean-field dynamics.

Going back and forth between the weak solution of Definition \ref{def:Hawkes:intensity} and the strong solution of Representation \ref{def:Hawkes:Thinning}  is classic: the thinning Theorem (see \cite[Lemma 2]{Bre_Massou} or \cite[Theorem B.11]{chevallier2015microscopic} for a complete proof) states that a strong solution is also a weak solution; and the Poisson inversion \cite[Lemma 4]{Bre_Massou} states that, from a weak solution $(N^{i})_{i=1,\dots ,n}$, one can construct Poisson measures on an enlarged probability space such that~\eqref{eq:Hawkes:Thinning} is satisfied.

At this stage, one has two equivalent concepts of ADRHPs but no result on the existence of such processes. Indeed, if there is too much self-excitation, then there may be an infinite number of points in finite time.
In the present paper, point processes that do not explode in finite time are considered and, thanks to Representation \ref{def:Hawkes:Thinning}, one can prove existence of these non-explosive processes.

\begin{prop}\label{prop:Thinning:well:posed}
\begin{sloppypar}
Under  (\hyperref[ass:mu:H:infty]{$\mathcal{A}^{\mu_{H}}_{\infty}$}), (\hyperref[ass:nu:F:locally:bounded]{$\mathcal{A}^{\nu_{F}}_{1}$}) and (\hyperref[ass:Psi:Lipschitz]{$\mathcal{A}^{\Psi}_{\rm Lip}$}), there exists an ADRHP  $(N^i)_{i=1,..,n}$ with parameters $(n,\mu_{H},\nu_{F},\Psi,\zeta_{N_{-}})$ such that $t\mapsto \mathbb{E}\left[ N_{t}^{1} \right]$ is locally bounded.
\end{sloppypar}
\end{prop}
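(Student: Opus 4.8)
The plan is to build the $n$-particle system directly from the i.i.d.\ Poisson measures of Representation~\ref{def:Hawkes:Thinning}, constructing the points of the pooled process $\bigcup_{i=1}^n N^i_+$ one at a time up to a possible explosion time $T_\infty$, and then to show $T_\infty=+\infty$ a.s.\ by means of a moment estimate resting on the compensation formula and $(\mathcal A^{\Psi}_{\rm Lip})$.

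First I would work conditionally on $\mathcal F_0$ (so that the $N_-^i$, $H_{ij}$, $F_{ij}$ are frozen) and proceed by induction: assuming the first $k$ points $0<T_1<\dots<T_k$ of $\bigcup_i N^i_+$ and their labels are known, each candidate intensity $t\mapsto\lambda^i_t$ in \eqref{eq:Hawkes:Thinning} is, on the half-line to the right of $T_k$ up to the next point, a deterministic function of $t$, because both the predictable age and the weighted sum evolve deterministically between jumps. Using $(\mathcal A^{\Psi}_{\rm Lip})$ in the form $\lambda^i_t\le\|\Psi(\cdot,0)\|_\infty+{\rm Lip}(\Psi)|\Xi^i_t|$, then $(\mathcal A^{\mu_H}_\infty)$ (local integrability of $M_{\mu_H}$) and $(\mathcal A^{\nu_F}_1)$ (which yields $\int_0^T|F_{ij}|<\infty$ a.s.\ by Tonelli), one checks $t\mapsto\lambda^i_t$ is a.s.\ locally integrable, so the region under its graph meets $\Pi^i$ in a locally finite set a.s.; the next point $T_{k+1}$ is then the earliest first coordinate of such an atom (possibly $+\infty$). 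This produces a process $(N^i_t)_{t<T_\infty}$, $T_\infty:=\sup_k T_k\in(0,\infty]$, that solves \eqref{eq:Hawkes:Thinning} on $[0,T_\infty)$.

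Next comes the key estimate. Since $N^i_{\cdot\wedge T_k}$ is bounded by $k$, localizing the local martingale $N^i_\cdot-\int_0^\cdot\lambda^i_s\,ds$ and passing to the limit gives $\mathbb E[N^i_{t\wedge T_k}]=\mathbb E[\int_0^t\lambda^i_s\mathds 1_{s\le T_k}\,ds]$, so $v_k(s):=\sum_{i=1}^n\mathbb E[\lambda^i_s\mathds 1_{s\le T_k}]$ lies in $L^1_{\rm loc}$, with $\int_0^t v_k\le nk$. On $\{s\le T_k\}$ every point of $N^j_+$ before $s$ already belongs to $N^j_{\cdot\wedge T_k}$, whose intensity is $\lambda^j_z\mathds 1_{z\le T_k}$; bounding $\mathds 1_{\{s\le T_k\}}\lambda^i_s$ via $(\mathcal A^{\Psi}_{\rm Lip})$ and $|H_{ij}|\le M_{\mu_H}$, taking expectations, applying the compensation formula with the deterministic (hence predictable) integrand $z\mapsto M_{\mu_H}(s-z)$, and summing over $i$, I expect to reach
\begin{equation*}
v_k(s)\ \le\ a(s)\ +\ {\rm Lip}(\Psi)\int_0^s M_{\mu_H}(s-z)\,v_k(z)\,dz,\qquad a(s):=n\|\Psi(\cdot,0)\|_\infty+n\,{\rm Lip}(\Psi)\,\mathbb E[|F(s)|],
\end{equation*}
with $F\sim\nu_F$, $a$ locally bounded by $(\mathcal A^{\nu_F}_1)$, the bound holding for a.e.\ $s$ uniformly in $k$. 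A Volterra--Grönwall argument for non-negative $L^1_{\rm loc}$ kernels then gives $\|v_k\|_{L^1[0,t]}\le C(t)$ with $C(t)$ independent of $k$, whence $\sum_i\mathbb E[N^i_{t\wedge T_\infty}]=\lim_k\int_0^t v_k\le C(t)<\infty$ by monotone convergence. On $\{T_\infty\le t\}$ the pooled process would have infinitely many points in $(0,t]$, so this finite bound forces $\mathbb P(T_\infty\le t)=0$ for every $t$, i.e.\ $T_\infty=+\infty$ a.s.; the resulting $(N^i)_{i=1,\dots,n}$ is then an ADRHP with the prescribed parameters by the thinning theorem, and $t\mapsto\mathbb E[N^1_t]\le C(t)$ is locally bounded.

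I expect the main obstacle to be the very last implication: because $M_{\mu_H}$ is only assumed locally integrable here — not bounded, and not locally square-integrable — one cannot bound $v_k$ pointwise, and the naive Neumann series $\sum_p{\rm Lip}(\Psi)^p M_{\mu_H}^{*p}\!*a$ need not converge in $L^\infty$ on a large interval; the argument has to stay at the level of $L^1_{\rm loc}$ controls and invoke existence of the $L^1_{\rm loc}$ resolvent kernel of ${\rm Lip}(\Psi)\,M_{\mu_H}$, which I would isolate as a lemma in the appendix. The path-by-path construction, the compensation identities, and the limit in $k$ are then routine.
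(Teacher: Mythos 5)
Your construction is correct, but it takes a genuinely different route from the paper. The paper does not build the process point by point and rule out explosion by a moment estimate; instead it invokes a \emph{stochastic domination} argument (Lemma~\ref{lem:stochastic:domination}): writing $\lambda^{i}_{t}\le G_{i}(t)+{\rm Lip}(\Psi)\,n^{-1}\sum_{j}\int_{0}^{t-}|H_{ij}(t-z)|\,N^{j}_{+}(dz)$ with $G_{i}$ a.s.\ locally integrable (by $(\mathcal{A}^{\nu_F}_{1})$ and Tonelli), it couples $(N^{i})$ with a dominating \emph{linear} multivariate Hawkes process $(\tilde N^{i})$ built from the same Poisson measures, whose non-explosion is already known from the Galton--Watson/branching representation of linear Hawkes processes. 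Once $N^{i}\subset\tilde N^{i}$ is established path by path, the local boundedness of $t\mapsto\mathbb{E}[N^{1}_{t}]$ follows by bounding $\mathbb{E}[n^{-1}\sum_{i}\tilde N^{i}_{t}]$ via Lemma~\ref{lem:double:Fubini} and the Gr\"onwall--Picard Lemma~\ref{lem:Gronwall:Picard}. In short: you prove non-explosion \emph{internally} by a localization/Volterra estimate on $v_k$, while the paper outsources non-explosion to the linear case via a comparison theorem. Your approach is more self-contained; the paper's is shorter because it reuses a well-known structural fact.

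One concrete remark on the step you flag as the main obstacle. You do not actually need an ad-hoc $L^{1}_{\rm loc}$ resolvent lemma: the paper's own Lemma~\ref{lem:double:Fubini} resolves it. Set $V_{k}(t):=\int_{0}^{t}v_{k}(s)\,ds$; this is locally bounded (by $nk$) and of bounded variation with $V_{k}(0)=0$. Integrating your Volterra inequality over $[0,t]$ and applying Lemma~\ref{lem:double:Fubini} with $\Phi=M_{\mu_H}$ and $\alpha=V_{k}$ converts the iterated integral into the clean convolution form
\begin{equation*}
V_{k}(t)\ \le\ \int_{0}^{t}a(s)\,ds\ +\ {\rm Lip}(\Psi)\int_{0}^{t}M_{\mu_{H}}(t-s)\,V_{k}(s)\,ds,
\end{equation*}
to which Lemma~\ref{lem:Gronwall:Picard} applies directly (locally integrable kernel, locally bounded unknown and source), giving $V_{k}(t)\le C(t)$ uniformly in $k$. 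This closes the argument exactly as you want, and keeps everything at the level of tools the paper already provides.
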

This result can be challenging in an infinite dimensional framework like in \cite{delattre2016}. However, in our finite dimensional framework, it is quite clear since the ADRHP can be stochastically dominated by some multivariate \emph{linear} Hawkes process (thanks to the Lipschitz assumption (\hyperref[ass:Psi:Lipschitz]{$\mathcal{A}^{\Psi}_{\rm Lip}$})). Yet the well-posedness of linear Hawkes processes is standard thanks to their branching structure \cite{hawkes_1974}.
Nevertheless, a proof of Proposition \ref{prop:Thinning:well:posed} is given in  \ref{sec:proof:prop:Thinning:well:posed}.\\

\section{Study of the limit dynamics}
\label{sec:study:limit:dynamics}

The interactions between the point processes involved in the definition of an ADRHP are of mean-field type. Therefore, the limit version of Equation~\eqref{eq:Hawkes:Thinning} is proposed below in Equation \eqref{eq:limit:equation:counting:process} (informally, the empirical means involved in~\eqref{eq:Hawkes:Thinning} are replaced by their expected values). The \emph{limit equation} with parameters $(h,f_{0},\Psi,\zeta_{N_{-}})$ is given by
\begin{equation}\label{eq:limit:equation:counting:process}
\forall t>0,\  \overline{N}_t= \int_0^t \int_0^\infty \mathds{1}_{ \left\{ x\leq \Psi\left( \overline{S}_{t'-},  \int_{0}^{t'-} h(t'-z) \mathbb{E} \left[\overline{N}_{+}(dz) \right] + f_{0}(t') \right)  \right\}}  \, \Pi(dt',dx),
\end{equation}
where $h$ and $f_{0}$ are some functions from $\mathbb{R}_{+}$ to $\mathbb{R}$, $\Pi(dt', dx)$ is an $\mathbb{F}$-Poisson measure on $\mathbb{R}_{+}^{2}$ with intensity $1$ and $(\overline{S}_{t-})_{t\geq 0}$ is the predictable age process associated with $\overline{N}=\overline{N}_{-}\cup \overline{N}_{+}$ where $\overline{N}_{-}$ is a point process distributed according to $\zeta_{N_{-}}$ and $\overline{N}_{+}$ is the point process associated with the counting process $(\overline{N}_t)_{ t \geq 0}$. 

Looking simultaneously at Equations \eqref{eq:Hawkes:Thinning} and \eqref{eq:limit:equation:counting:process} shows that the empirical mean of the random interaction functions $H_{ij}$ (\emph{respectively the random functions $F_{ij}$}) are replaced by $h$ (\emph{resp. $f_{0}$}) which should be the mean interaction function $m_{\mu_{H}}$ (\emph{resp. $m_{\nu_{F}}$}). Moreover, the empirical mean of the point measures $N_{+}^{j}(dz)$ in \eqref{eq:Hawkes:Thinning} is replaced by the expectation of the point measure $\overline{N}_{+}(dz)$.

Finally, let us note that the dependence with respect to the predictable age process is still present in the limit equation. This matches with experimental data in neuroscience where refractory periods are highlighted \cite{berry1998refractoriness,fuortes1962interpretation,gerstner2002spiking}. By comparison, there is no such dependence in the limit process given in \cite{delattre2016} which is an inhomogeneous Poisson process.\\

This limit equation is used in the next section to provide suitable couplings to prove the mean-field approximation. 
Hence, the main point of this section is to prove the well-posedness of the limit equation \eqref{eq:limit:equation:counting:process}. However, to study the probabilistic formulation of the mean-field dynamics described in Equation \eqref{eq:limit:equation:counting:process}, one first needs to find a representation of the distribution of a possible solution of \eqref{eq:limit:equation:counting:process}. As a first step, we prove existence/uniqueness results for a linearisation of the \eqref{eq:PPS:intro} system (Proposition \ref{prop:existence:uniqueness:PPS:linear}) as well as give a representation of the solution given by the method of characteristics (Proposition \ref{prop:characteristics:PPS:linear}). The second step is to deduce existence/uniqueness results for the \eqref{eq:PPS:intro} system (Theorem \ref{thm:existence:uniqueness:PPS}) from the linearised system via a fixed point argument.
Then, the well-posedness of the limit equation \eqref{eq:limit:equation:counting:process} is proved thanks to the results obtained for the \eqref{eq:PPS:intro} system. Finally, the link between the \eqref{eq:PPS:intro} system and the processes defined by the limit equation is fully investigated.

Note that the analytical study of the fixed point equation satisfied by the expectation of the solution of \eqref{eq:limit:equation:counting:process} (as it is done in \cite{delattre2016}) can be extended to the case when the intensity does depend on the age. However, the results for the \eqref{eq:PPS:intro} system are valid in a more general framework so they are favoured here.

\subsection{Study of the linear system}
\label{sec:linear:system}

In comparison with the \eqref{eq:PPS:intro} system, the linear system studied below corresponds to the case where the firing rate $p$ in \eqref{eq:PPS:intro} is a function of the time $t$ and the age $s$ only. More precisely, we consider the system
\begin{equation}\label{eq:edp:PPS:linear}
\begin{cases} 
\displaystyle \frac{\partial u\left(t,s\right)}{\partial t}+\frac{\partial u\left(t,s\right)}{\partial s} + f(t,s) u\left(t,s\right)=0, \\
\displaystyle u\left(t,0\right)=\int_{s\in \mathbb{R}_{+}} f(t,s) u\left(t,s\right)ds ,
\end{cases}
\end{equation}
where $f$ is a bounded function.

We state uniqueness of the solution of this system in a measure space as a consequence of the uniqueness result stated in \cite{canizo2013measure}.
More precisely, the result is stated in $\mathcal{BC}(\mathbb{R}_{+},\mathcal{M}(\mathbb{R}))$ that is the space of bounded continuous curves on $\mathcal{M}(\mathbb{R})$ (the space of Radon measures on $\mathbb{R}$) endowed with the \emph{bounded Lipschitz norm} as considered in \cite{canizo2013measure}.
As we are interested in probability measures, let us remark that the bounded Lipschitz norm on $\mathcal{P}(\mathbb{R}_{+})$ is equivalent, thanks to the duality of Kantorovich-Rubinstein, to the modified $1$-Wasserstein distance defined by
\begin{equation}\label{eq:def:modified:Wasserstein:distance}
\tilde{W}_{1} (\mu, \nu):=\inf \mathbb{E}\left[ \min(|X-Y|,1) \right],
\end{equation}
where the infimum is taken over all joint distributions of the random variables $X$ and $Y$ with marginals $\mu$ and $\nu$ respectively.

Since measure solutions are considered, a weak form of the system is given. The following set of test functions is used:

\smallskip
\noindent$\mathcal{C}_{c,b}^{\infty}(\mathbb{R}_{+}^{2})\,
 \textrm{\begin{tabular}{|l} The function $\varphi$ belongs to $\mathcal{C}_{c,b}^{\infty}(\mathbb{R}_{+}^{2})$  if  \\ 
 $\ \bullet$ $\varphi$ is continuous, uniformly bounded, \\ 
 $\ \bullet$ $\varphi$ has  uniformly bounded derivatives of every order,\\ 
 $\ \bullet$ there exists $T>0$ such that $\varphi(t,s)=0$ for all $t>T$ and $s\geq 0$.
 \end{tabular}}$\\
 
The result stated below is a consequence of \cite[Theorem 2.4.]{canizo2013measure} in the same essence than the one presented in \cite[Section 3.3.]{canizo2013measure}.
Its proof is given in  \ref{sec:proof:prop:existence:uniqueness:PPS:linear}.

\begin{prop}\label{prop:existence:uniqueness:PPS:linear}
Assume that $f:\mathbb{R}_{+}\times \mathbb{R}_{+}\to \mathbb{R}$ is bounded and continuous (uniformly in the second variable) with respect to the first variable. Assume that $u^{\rm in}$ belongs to $\mathcal{M}(\mathbb{R}_{+})$.

Then, there exists a unique solution in the weak sense $u$ such that $t\mapsto u(t,\cdot)$ belongs to $\mathcal{BC}(\mathbb{R}_{+},\mathcal{M}(\mathbb{R}_{+}))$ of the system \eqref{eq:edp:PPS:linear} with initial condition $u(0,\cdot)=u^{\rm in}$. The weak sense means here that for every $\varphi$ in $\mathcal{C}^{\infty}_{c,b}(\mathbb{R}_{+}^{2})$,
\begin{multline}\label{eq:edp:PPS:linear:weak:sense}
\int_{\mathbb{R}_{+}^{2}} \left(\frac{\partial}{\partial t}+\frac{\partial}{\partial s}\right) \varphi\left(t,s\right)u\left( t, ds\right) dt +\int_{\mathbb{R}_{+}} \varphi(0,s) u^{\rm in}(ds)  \\
+ \int_{\mathbb{R}_{+}^{2}} [\varphi(t,0)-\varphi(t,s) ] f(t,s) u( t, ds) dt=0.
\end{multline}
\end{prop}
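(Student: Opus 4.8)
The plan is to obtain the result as a direct application of the abstract measure-solution theory of Cañizo, Carrillo and Cuadrado \cite[Theorem 2.4.]{canizo2013measure}, after recasting \eqref{eq:edp:PPS:linear} as a transport equation with a linear boundary reinjection term on the half-line $\mathbb{R}_+$. First I would write the system in the form required by that reference: the characteristic field is the constant vector field $\partial_t + \partial_s$ (age increases at unit speed), the loss term is the multiplication operator $u \mapsto f(t,s)u$, and the boundary condition at $s=0$ prescribes the incoming mass as $u(t,0) = \int_{\mathbb{R}_+} f(t,s)u(t,\mathrm{d}s)$, i.e. the total mass lost through the reaction term is reinjected at age $0$. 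Crucially, this coupling is mass-conserving, which is exactly the structural feature that makes the fixed-point/semigroup construction of \cite{canizo2013measure} applicable; I would check that the boundedness of $f$ (hence of the reaction rate) and its continuity in $t$, uniform in $s$, are precisely the regularity hypotheses under which their theorem yields a unique curve $t \mapsto u(t,\cdot)$ in $\mathcal{BC}(\mathbb{R}_+, \mathcal{M}(\mathbb{R}_+))$ for the bounded Lipschitz norm, with the prescribed initial datum $u^{\mathrm{in}} \in \mathcal{M}(\mathbb{R}_+)$.

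Next I would verify that the weak formulation \eqref{eq:edp:PPS:linear:weak:sense} is the correct dual formulation. Starting from a (formal) classical solution, I multiply the PDE by $\varphi \in \mathcal{C}^\infty_{c,b}(\mathbb{R}_+^2)$, integrate over $\mathbb{R}_+^2$, and integrate by parts in both $t$ and $s$. The $t$-integration by parts produces the term $\int_{\mathbb{R}_+}\varphi(0,s)u^{\mathrm{in}}(\mathrm{d}s)$ (the boundary at $t=0$; the boundary at $t=+\infty$ vanishes since $\varphi$ is compactly supported in time). The $s$-integration by parts produces a boundary term at $s=0$, namely $-\int_{\mathbb{R}_+}\varphi(t,0)u(t,0)\,\mathrm{d}t$, into which I substitute the boundary condition $u(t,0)=\int_{\mathbb{R}_+}f(t,s)u(t,\mathrm{d}s)\,\mathrm{d}s$. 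Collecting the reaction term $-\int \varphi f u$ together with this $-\int \varphi(t,0) f u$ contribution yields exactly the bracketed expression $[\varphi(t,0)-\varphi(t,s)]f(t,s)$ in \eqref{eq:edp:PPS:linear:weak:sense}. This identifies the weak form; conversely, any weak solution is a renormalized/distributional solution of the transport-with-boundary problem in the sense handled by \cite{canizo2013measure}, so their uniqueness applies verbatim.

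The main obstacle is a bookkeeping one rather than a conceptual one: matching conventions. One must check carefully that the test-function class $\mathcal{C}^\infty_{c,b}(\mathbb{R}_+^2)$ used here is adapted to (or at least dense enough to be equivalent to) the class used in \cite{canizo2013measure}, and that the half-line state space $\mathbb{R}_+$ with a boundary at $s=0$ — rather than a manifold without boundary — fits their framework, which requires treating the reinjection at $s=0$ as part of the dynamics. I would handle this exactly as in \cite[Section 3.3.]{canizo2013measure}, where the analogous age-structured renewal equation is treated: the only points needing attention are (i) that the flow map associated with $\partial_t+\partial_s$ is globally defined on $\mathbb{R}_+$ and pushes mass toward larger ages, with mass created at the boundary, and (ii) that the bounded Lipschitz norm is the right topology for the contraction estimate, which on $\mathcal{P}(\mathbb{R}_+)$ coincides (up to equivalence, by Kantorovich–Rubinstein duality) with the modified Wasserstein distance $\tilde W_1$ of \eqref{eq:def:modified:Wasserstein:distance}. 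Since the reaction rate $f$ is bounded, the map defining the boundary value is Lipschitz in the measure argument for this norm, which closes the argument; I would then simply invoke \cite[Theorem 2.4.]{canizo2013measure} to conclude existence and uniqueness, deferring the routine verifications to \ref{sec:proof:prop:existence:uniqueness:PPS:linear}.
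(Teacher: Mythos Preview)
Your proposal is correct and follows essentially the same route as the paper: both recast \eqref{eq:edp:PPS:linear} in the formalism of \cite{canizo2013measure} (transport plus a loss term $-f u$ and a boundary reinjection at $s=0$ encoded as a Dirac source) and invoke \cite[Theorem 2.4.]{canizo2013measure}, handling the half-line issue via the discussion in \cite[Section 3.3.]{canizo2013measure}. The only point on which the paper is more explicit is the mechanism for the half-line: since \cite[Theorem 2.4.]{canizo2013measure} is stated for measures on $\mathbb{R}$, the paper extends $f$ to $s\in\mathbb{R}$ by mirror symmetry, checks hypotheses (H1)--(H5) there, and then verifies support conservation to return to $\mathcal{M}(\mathbb{R}_+)$; you gesture at this step without naming the extension trick, but your reference to \cite[Section 3.3.]{canizo2013measure} covers it.
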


Remark that the system is mass-conservative (e.g. take a sequence of functions converging to $t\mapsto \mathds{1}_{[0,T]}(t)$ as test functions in the weak equation \eqref{eq:edp:PPS:linear:weak:sense}). 
As we are interested in probability measures as solutions, let us remark that the mass-conservation alone cannot ensure that the solution is a probability even if the initial condition is a probability. However, when the initial condition is a probability which admits a density, the method of characteristics shows that the solution of \eqref{eq:edp:PPS:linear} is a probability density function for all time $t\geq 0$.
\begin{prop}\label{prop:characteristics:PPS:linear}
Under the assumptions of Proposition \ref{prop:existence:uniqueness:PPS:linear}, assume that $u^{\rm in}$ is a probability which admits a density (denoted by $u^{\rm in}$ as well) with respect to the Lebesgue measure. Then, there exists a unique locally bounded function $u_{0}:\mathbb{R}_{+}\to \mathbb{R}$ (which is furthermore non-negative) such that $u$ defined by
\begin{numcases}{}
u(t,s) = u^{\rm in}(s-t) \exp\left( -\int_{0}^{t} f(t',s-t+t') dt'\right),  \quad \text{for } s\geq t \label{eq:characteristics:s>t}\\
u(t,s) = u_{0}(t-s) \exp\left( -\int_{0}^{s} f(t-s+s',s') ds'\right),  \quad \text{ for } t\geq s \label{eq:characteristics:t>s}
\end{numcases}
is the unique solution of \eqref{eq:edp:PPS:linear}.
In particular, 
\begin{itemize}
\item $u$ satisfies the second equation of \eqref{eq:edp:PPS:linear} in a strong sense,
\item since $u_{0}$ is non-negative and the system is mass-conservative, the function $u(t,\cdot)$ is a density for all time $t\geq 0$.
\end{itemize}
\end{prop}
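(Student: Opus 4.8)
The plan is to use the method of characteristics to transport the initial datum along the lines $s - t = \text{const}$, and then to determine the boundary value $u_0(t) = u(t,0)$ by turning the boundary condition of \eqref{eq:edp:PPS:linear} into a renewal-type integral equation which is solved by a fixed-point (Picard iteration / contraction) argument. First I would verify that the two formulae \eqref{eq:characteristics:s>t}--\eqref{eq:characteristics:t>s} formally solve the transport equation: along a characteristic $t \mapsto (t, s_0 + t)$ (for $s_0 \geq 0$, giving $s \geq t$) the function $v(t) := u(t, s_0 + t)$ satisfies $v'(t) = -f(t, s_0 + t) v(t)$, hence $v(t) = v(0)\exp(-\int_0^t f(t', s_0 + t')\,dt')$ with $v(0) = u^{\rm in}(s_0) = u^{\rm in}(s-t)$; similarly along $t \mapsto (t_0 + t, t)$ (for $t_0 \geq 0$, giving $t \geq s$) one gets the second formula with the free value $u(t_0, 0) = u_0(t_0)$. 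These formulae are consistent on the diagonal $s = t$ only through the as-yet-undetermined $u_0$, and the point of the proposition is that $u_0$ is forced.

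Next, I would plug \eqref{eq:characteristics:s>t}--\eqref{eq:characteristics:t>s} into the boundary condition $u(t,0) = \int_0^\infty f(t,s) u(t,s)\,ds$, splitting the integral at $s = t$. The piece $s \geq t$ contributes a known function of $t$, call it $\Phi^{\rm in}(t) := \int_t^\infty f(t,s)\, u^{\rm in}(s-t)\exp(-\int_0^t f(t', s-t+t')\,dt')\,ds$; after the change of variables $\sigma = s - t$ this is $\int_0^\infty f(t, t+\sigma)\, u^{\rm in}(\sigma)\, e^{-\int_0^t f(t', \sigma + t')\,dt'}\,d\sigma$, which is finite and locally bounded since $u^{\rm in}$ is a probability density and $f$ is bounded. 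The piece $s < t$ gives, using \eqref{eq:characteristics:t>s} and the substitution (keeping $a := t - s$ so $a$ ranges over $(0,t)$), a term of the form $\int_0^t K(t, a)\, u_0(a)\,da$ with kernel $K(t,a) = f(t, t-a)\exp(-\int_0^{t-a} f(a + s', s')\,ds')$, which satisfies $0 \leq K(t,a) \leq \|f\|_\infty$ wait — more carefully $|K(t,a)| \le \|f\|_\infty e^{(t-a)\|f\|_\infty}$, so $K$ is bounded on compact time intervals. Thus $u_0$ must satisfy the linear Volterra equation
\begin{equation*}
u_0(t) = \Phi^{\rm in}(t) + \int_0^t K(t,a)\, u_0(a)\, da.
\end{equation*}
On $[0,T]$ this is a Volterra equation of the second kind with bounded kernel, so the standard Picard iteration converges in $L^\infty([0,T])$ (the $n$-th iterated kernel is bounded by $(\|f\|_\infty e^{T\|f\|_\infty})^n T^{n-1}/(n-1)!$, which is summable); this yields existence and uniqueness of a locally bounded $u_0$. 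Non-negativity of $u_0$ follows because $\Phi^{\rm in} \geq 0$ (as $f \geq 0$ should it be assumed — actually $f$ is only bounded here, but $f \ge 0$ in the intended application; if $f$ is genuinely signed one instead argues via the probabilistic representation or directly via the iteration preserving sign when $f \ge 0$) and $K \geq 0$ when $f \ge 0$, so every Picard iterate starting from $\Phi^{\rm in}$ stays non-negative and the limit inherits this.

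Finally I would check the two bulleted conclusions. The second equation of \eqref{eq:edp:PPS:linear} holds in the strong sense essentially by construction, since $u_0$ was defined precisely so that $u(t,0) = \int_0^\infty f(t,s) u(t,s)\,ds$ holds pointwise in $t$; one only needs to justify interchanging the integral and the defining relations, which is routine from the local boundedness just established. For the mass-conservation / density claim: the remark preceding the proposition already records that the weak system is mass-conservative, so $\int_0^\infty u(t,s)\,ds = \int_0^\infty u^{\rm in}(s)\,ds = 1$ for all $t$; combined with $u(t,\cdot) \geq 0$ (which follows from $u^{\rm in} \geq 0$, $u_0 \geq 0$, and the non-negativity of the exponential factors), this shows $u(t,\cdot)$ is a probability density for every $t \geq 0$. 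I expect the main obstacle to be the bookkeeping in the two changes of variables that turn the boundary condition into the clean Volterra form — in particular being careful that the exponential weights from \eqref{eq:characteristics:s>t} and \eqref{eq:characteristics:t>s} are expressed along the correct characteristic lines — together with, if $f$ is not assumed non-negative, supplying an alternative argument for the sign of $u_0$ (e.g. via the probabilistic interpretation of Proposition~\ref{prop:existence:uniqueness:PPS:linear}'s solution, or simply adding the hypothesis $f \ge 0$, which holds in all the applications in this paper).
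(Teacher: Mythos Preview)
Your proposal is correct and takes essentially the same route as the paper: derive the two characteristic formulas, then determine $u_0$ as the unique fixed point of the Volterra-type equation obtained by substituting \eqref{eq:characteristics:s>t}--\eqref{eq:characteristics:t>s} into the boundary condition. The only differences are cosmetic---the paper makes the map a $\tfrac12$-contraction on a short interval $[0,T]$ (choosing $T\|f\|_\infty\le \tfrac12$, using that the exponential argument is non-positive, i.e.\ implicitly $f\ge 0$) and then iterates forward in time, rather than summing the Neumann series directly as you do---and the paper closes with an explicit integration-by-parts computation verifying that the constructed $u$ satisfies the weak formulation \eqref{eq:edp:PPS:linear:weak:sense}, a step you allude to but should carry out in your write-up.
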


A detailed proof of this result is given in  \ref{sec:proof:prop:characteristics:PPS:linear}.
Here are listed some properties of the solution $u$ of the linear system.

\begin{prop}\label{prop:properties:solution:PPS:linear}
Under the assumptions of Proposition \ref{prop:characteristics:PPS:linear}, assume furthermore that there exists $M>0$ such that for all $s\geq 0,$ $0\leq u^{\rm in}(s)\leq M$.

Then, the solution $u$ of \eqref{eq:edp:PPS:linear} is such that the function $t\mapsto u(t,\cdot)$ belongs to $\mathcal{C}(\mathbb{R}_{+},L^{1}(\mathbb{R}_{+}))$, the function $t\mapsto u(t,0)$ is continuous and
\begin{equation}\label{eq:apriori:bound:u:linear}
0\leq u(t,s)\leq \max(M,|| f ||_{\infty}), \quad \text{for all $t,s\geq 0$.}
\end{equation}
\end{prop}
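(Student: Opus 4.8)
The plan is to read off all three assertions from the explicit representation \eqref{eq:characteristics:s>t}--\eqref{eq:characteristics:t>s} furnished by Proposition \ref{prop:characteristics:PPS:linear}, so the first step is to record that under the present hypotheses that representation holds with a locally bounded non-negative $u_{0}$. The pointwise bound \eqref{eq:apriori:bound:u:linear} should be obtained first, because continuity of $u_{0}$ (hence of $t\mapsto u(t,0)$) will be needed for the other two claims and its proof is cleanest once one knows $u$ is controlled. For $s\geq t$ one has $u(t,s)=u^{\rm in}(s-t)\exp(-\int_{0}^{t} f(t',s-t+t')\,dt')\leq M$ since $f\geq 0$ (here I would note that boundedness of $f$ together with the second equation forcing $u(t,0)\geq 0$, or the non-negativity coming from the method of characteristics, gives $f\geq0$; if $f$ is not assumed non-negative the exponential is bounded by $e^{t\|f\|_\infty}$, which is only locally bounded, so I would double-check whether the intended hypothesis includes $f\geq0$ — in the \eqref{eq:PPS:intro} context $p$ is a firing rate, hence non-negative, and I will assume that). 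For $t\geq s$ I would use the boundary identity $u(t,0)=\int_{0}^{\infty} f(t,\sigma)u(t,\sigma)\,d\sigma=u_{0}(t)$ together with a Gronwall-type argument on $t\mapsto \sup_{s}u(t,s)$, or alternatively integrate the transport equation along characteristics: writing $v(t)=u(t,0)$, the relation $v(t)=\int_{0}^{t} f(t,s)u_{0}(t-s)e^{-\int_{0}^{s}f(t-s+s',s')ds'}\,ds+\int_{t}^{\infty} f(t,s)u^{\rm in}(s-t)e^{-\cdots}\,ds$ gives, since $f e^{-\int f}\le -\frac{d}{ds}e^{-\int f}\le$ is a sub-probability weight in $s$, the bound $v(t)\le \max(\sup_{[0,t]}v,\|u^{\rm in}\|_\infty)\cdot\|f\|_\infty\cdot(\text{something})$; cleaner is to observe directly that $u(t,s)\le \|f\|_\infty\cdot(\sup\text{ of the relevant }u_0\text{ or }u^{\rm in})$ only after bounding $u_0$, so the honest route is: along each backward characteristic $u$ decays, hence $\sup_s u(t,s)\le\max(\sup_s u(t-\varepsilon,s),\ldots)$ and a continuity/boundedness bootstrap yields $u(t,s)\le\max(M,\|f\|_\infty)$, the $\|f\|_\infty$ appearing precisely because $u_0(t)=u(t,0)\le \|f\|_\infty\int u(t,\cdot)=\|f\|_\infty$ by mass conservation.

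Next I would prove $t\mapsto u(t,0)=u_0(t)$ is continuous. From Proposition \ref{prop:characteristics:PPS:linear}, $u_0$ is the unique locally bounded function making \eqref{eq:characteristics:s>t}--\eqref{eq:characteristics:t>s} consistent; plugging these into the boundary condition $u(t,0)=\int_0^\infty f(t,s)u(t,s)\,ds$ and splitting at $s=t$ expresses $u_0(t)$ as
\[
u_0(t)=\int_0^t f(t,s)\,u_0(t-s)\,e^{-\int_0^s f(t-s+s',s')\,ds'}\,ds+\int_t^\infty f(t,s)\,u^{\rm in}(s-t)\,e^{-\int_0^t f(t',s-t+t')\,dt'}\,ds.
\]
This is a renewal-type integral equation for $u_0$ with a kernel and a source term that are continuous in $t$ — the continuity of the source uses dominated convergence (kernel bounded by $\|f\|_\infty$, $u^{\rm in}\le M$, integrable tail because $u^{\rm in}$ is a density and the exponential is $\le1$), and the continuity in $t$ of $s\mapsto f(t,\cdot)$-integrals uses the assumed uniform-in-$s$ continuity of $f$ in its first variable. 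A standard argument (locally uniform bound on $u_0$ already established, plus the integral equation) then upgrades local boundedness of $u_0$ to continuity: $|u_0(t+\eta)-u_0(t)|$ is controlled by the modulus of continuity of the source plus $\|f\|_\infty\int_0^{t+\eta}|u_0|$-type terms handled by Gronwall on the difference.

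Finally, for $t\mapsto u(t,\cdot)\in\mathcal C(\mathbb R_+,L^1(\mathbb R_+))$ I would again use the explicit formula. Mass conservation (already remarked after Proposition \ref{prop:existence:uniqueness:PPS:linear}) gives $\|u(t,\cdot)\|_1=1$ for all $t$, so it suffices to show $s\mapsto u(t,s)$ varies continuously in $L^1$. Fix $t$ and $t+\eta$; split each at $s=t$ and $s=t+\eta$ and estimate $\int_0^\infty|u(t+\eta,s)-u(t,s)|\,ds$ by: (i) on $s$ large (both in the $s\ge t$ regime) the integrand is $\le u^{\rm in}(s-t-\eta)e^{-\int f}+u^{\rm in}(s-t)e^{-\int f}$ plus a difference controlled by translation-continuity of $u^{\rm in}$ in $L^1$ and uniform continuity of $f$; (ii) on the intermediate strip $s\in[t,t+\eta]$, the measure of the strip is $\eta$ and the integrand is $\le 2\max(M,\|f\|_\infty)$ by \eqref{eq:apriori:bound:u:linear}, giving an $O(\eta)$ term; (iii) on $s\le t$ (both in the $t\ge s$ regime) the integrand involves $|u_0(t+\eta-s)-u_0(t-s)|$, controlled by the modulus of continuity of $u_0$ just established, times $\|f\|_\infty$-bounded exponentials, plus a $f$-continuity term. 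Each piece $\to0$ as $\eta\to0$, uniformly for $t$ in compacts. The main obstacle, as usual in these PPS-type arguments, is the handling of the discontinuity of the representation across the diagonal $s=t$: one must be careful that the two branch formulas match up (they do, since both equal $u_0(0)=u^{\rm in}(0^-)\!=\!$ the boundary value, but $u^{\rm in}$ being only $L^\infty$ this match is in an $L^1/$a.e. sense, not pointwise) and that the $O(\eta)$-strip estimate in step (ii)/(iii) genuinely uses the $L^\infty$ bound \eqref{eq:apriori:bound:u:linear} rather than pointwise values — which is exactly why \eqref{eq:apriori:bound:u:linear} is proved first.
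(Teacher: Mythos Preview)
Your argument is correct but organized differently from the paper's. You both obtain \eqref{eq:apriori:bound:u:linear} the same way: $u(t,0)=\int f(t,s)u(t,s)\,ds\le\|f\|_\infty$ by mass conservation, and then the two branches of the characteristics give the bound (your worry about the sign of $f$ is legitimate --- the paper's argument for the branch $s\ge t$ tacitly uses $f\ge 0$, which is indeed the case in the only application, Theorem~\ref{thm:existence:uniqueness:PPS}, where $f(t,s)=\Psi(s,Y(t)+f_0(t))\ge 0$). The difference lies in the two continuity statements. The paper proves $t\mapsto u(t,\cdot)\in\mathcal C(\mathbb R_+,L^1)$ first, by citing a standard fixed-point argument in $\mathcal C([0,T],L^1(\mathbb R_+))$, and then derives continuity of $t\mapsto u(t,0)$ in one line from the boundary identity:
\[
|u(t+t',0)-u(t,0)|\le \|f\|_\infty\,\|u(t+t',\cdot)-u(t,\cdot)\|_1+\sup_{s\ge 0}|f(t+t',s)-f(t,s)|.
\]
You instead reverse the dependency: you extract continuity of $u_0$ directly from the renewal equation it satisfies (via a Gronwall on $|u_0(t+\eta)-u_0(t)|$), and then feed that into an explicit three-region estimate on $\|u(t+\eta,\cdot)-u(t,\cdot)\|_1$ using the characteristics. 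Your route is more self-contained and makes every estimate explicit; the paper's route is shorter and cleaner once one accepts the $L^1$-continuity as classical, and in particular avoids the somewhat delicate diagonal/strip bookkeeping you flag in step~(ii).
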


\begin{proof}
The first continuity property is rather classic thanks to a fixed point argument in the space $\mathcal{C}([0,T],L^{1}(\mathbb{R}_{+}))$ for a good choice of $T>0$  (see \cite[Section 3.3.]{perthame2006transport} for instance).
The second one is given by the second equation of \eqref{eq:edp:PPS:linear} (which is satisfied in a strong sense by the solution given by the characteristics). Indeed,
\begin{eqnarray*}
|u(t+t',0)-u(t,0)| &\leq & \int_{0}^{+\infty} f(t+t',s) |u(t+t' ,s)-u(t,s) | ds \\
& & + \int_{0}^{+\infty} | f(t+t',s) - f(t,s) | u(t,s) ds\\
& \leq & || f ||_{\infty} || u(t+t',\cdot) - u(t,\cdot) ||_{1}\\
&& + \sup_{s\geq 0} | f(t+t',s) - f(t,s) |.
\end{eqnarray*}
Yet, the continuity properties of both functions f and $t\mapsto u(t,\cdot)$ and give that $|u(t+t',0)-u(t,0)|$ goes to $0$ as $t'$ goes to $0$, hence the continuity of $t\mapsto u(t,0)$.

Finally, one can prove that $u$ satisfies \eqref{eq:apriori:bound:u:linear} thanks to the representation given by the characteristics.
On the one hand, the function $u_{0}$ given in Proposition \ref{prop:characteristics:PPS:linear} is non-negative and so is $u$.
On the other hand, it follows from \eqref{eq:characteristics:s>t} that for $s\geq t$, $u(t,s)\leq M$ and it follows from the second equation of \eqref{eq:edp:PPS:linear} that for all $t\geq 0$, $u(t,0)\leq || f ||_{\infty}$ and so \eqref{eq:characteristics:t>s} implies that for $t\geq s$, $u(t,s)\leq || f ||_{\infty}$.
\end{proof}

\subsection{Study of the \eqref{eq:PPS:intro} system}

Here, a global existence result for the nonlinear system \eqref{eq:PPS:intro} is stated under suitable assumptions. Since this result is one of the cornerstone of this work, its proof is given even if its sketch is pretty similar to the proof of \cite[Theorem 5.1]{pakdaman2010dynamics}.

\begin{sloppypar}
In comparison with the uniqueness for the linear system which takes place in $\mathcal{BC}(\mathbb{R}_{+},\mathcal{M}(\mathbb{R}_{+}))$, the uniqueness result stated in this section takes place in $\mathcal{BC}(\mathbb{R}_{+},\mathcal{P}(\mathbb{R}_{+}))$. However, this last result is sufficient for our purpose since it is applied to measures that are probabilities a priori.
\end{sloppypar}

First of all, a technical lemma is needed to fully understand the non linearity involved in the system \eqref{eq:PPS:intro}.

\begin{lem}\label{lem:Xu:well:defined}
Under (\hyperref[ass:Psi:Lipschitz]{$\mathcal{A}^{\Psi}_{\rm Lip}$}) and (\hyperref[ass:Psi:uniformly:bounded]{$\mathcal{A}^{\Psi}_{\infty}$}), assume that $h:\mathbb{R}_{+}\to \mathbb{R}$ is locally integrable and that $f_{0}:\mathbb{R}_{+}\to \mathbb{R}$ is continuous.
Then, for all $u$ in $\mathcal{BC}(\mathbb{R}_{+},\mathcal{P}(\mathbb{R}_{+}))$ there exists a unique function $X_{u}:\mathbb{R}_{+}\mapsto \mathbb{R}$ such that
\begin{equation}\label{eq:fixed:point:Xu}
X_{u}(t)=\int_{z=0}^{t} \int_{s=0}^{+\infty} h(t-z) \Psi\left(s,   X_{u}(z) + f_{0}(z) \right)u\left(z,ds\right)dz.
\end{equation}
Furthermore, the function $X_{u}$ is continuous.
\end{lem}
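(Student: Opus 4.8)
The plan is a Picard--Banach fixed-point argument carried out on each finite time interval; uniqueness on intervals then lets the interval-solutions be glued together. First I would fix $T>0$ and introduce, on $\mathcal{C}([0,T],\mathbb{R})$, the operator
\[
\Gamma(x)(t) := \int_{0}^{t}\int_{0}^{+\infty} h(t-z)\,\Psi\big(s,\,x(z)+f_0(z)\big)\,u(z,ds)\,dz, \qquad t\in[0,T].
\]
I would check that $\Gamma$ is well defined and maps $\mathcal{C}([0,T],\mathbb{R})$ into itself. The inner integral $g(z):=\int_{0}^{+\infty}\Psi(s,x(z)+f_0(z))\,u(z,ds)$ is a Borel function of $z$ bounded by $\|\Psi\|_\infty<+\infty$ (by (\hyperref[ass:Psi:uniformly:bounded]{$\mathcal{A}^{\Psi}_{\infty}$}); the measurability in $z$ comes from continuity of $\Psi(s,\cdot)$, boundedness of $s\mapsto\Psi(s,0)$ and continuity of $z\mapsto u(z,\cdot)$ in $\tilde{W}_{1}$). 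Since $h$ is locally integrable, $|\Gamma(x)(t)|\le\|\Psi\|_\infty\|h\|_{L^1([0,T])}$; and, writing $\Gamma(x)$ on $[0,T]$ as the convolution of the $L^1$ kernel $h$ with the bounded function $g$ (both extended by $0$ outside $[0,T]$), continuity of $t\mapsto\Gamma(x)(t)$ follows since the convolution of an $L^1$ function with a bounded function is (uniformly) continuous.

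Next I would establish a contraction estimate. Using (\hyperref[ass:Psi:Lipschitz]{$\mathcal{A}^{\Psi}_{\rm Lip}$}) and that $u(z,\cdot)$ is a probability measure,
\[
|\Gamma(x)(t)-\Gamma(y)(t)|\le {\rm Lip}(\Psi)\int_{0}^{t}|h(t-z)|\,|x(z)-y(z)|\,dz,\qquad t\in[0,T].
\]
Equipping $\mathcal{C}([0,T],\mathbb{R})$ with the Bielecki-type norm $\|x\|_\lambda:=\sup_{0\le t\le T}e^{-\lambda t}|x(t)|$ (equivalent to the supremum norm), the change of variable $w=t-z$ yields
\[
e^{-\lambda t}|\Gamma(x)(t)-\Gamma(y)(t)|\le {\rm Lip}(\Psi)\,\|x-y\|_\lambda\int_{0}^{T}|h(w)|e^{-\lambda w}\,dw.
\]
Since $|h|\in L^1([0,T])$, dominated convergence shows $\int_{0}^{T}|h(w)|e^{-\lambda w}\,dw\to 0$ as $\lambda\to+\infty$, so for $\lambda$ large enough $\Gamma$ is a contraction of $(\mathcal{C}([0,T],\mathbb{R}),\|\cdot\|_\lambda)$, and Banach's fixed-point theorem provides a unique $X_u^{T}\in\mathcal{C}([0,T],\mathbb{R})$ solving \eqref{eq:fixed:point:Xu} on $[0,T]$. (Equivalently, one could avoid the weighted norm, first solve on a short interval $[0,T_0]$ with ${\rm Lip}(\Psi)\|h\|_{L^1([0,T_0])}<1$, and then iterate over the intervals $[kT_0,(k+1)T_0]$, the already-determined part of the integral contributing a fixed continuous function on each step.)

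Finally, by the uniqueness on each interval, $X_u^{T'}$ restricted to $[0,T]$ coincides with $X_u^{T}$ whenever $T\le T'$, so these functions glue into a single continuous $X_u:\mathbb{R}_+\to\mathbb{R}$ solving \eqref{eq:fixed:point:Xu} for every $t\ge 0$; uniqueness on $\mathbb{R}_+$ is immediate since any global solution restricts to the (unique) solution on each $[0,T]$, and continuity of $X_u$ is automatic because $X_u=\Gamma(X_u)$ with $\Gamma$ valued in continuous functions. I do not expect a genuine obstacle here: this is a textbook contraction scheme, and the only points needing a little care are the measurability/integrability bookkeeping for $z\mapsto g(z)$ and the isolation of the small quantity ${\rm Lip}(\Psi)\int_0^T|h(w)|e^{-\lambda w}\,dw$, which is precisely where the uniform bound on $\Psi$ from (\hyperref[ass:Psi:uniformly:bounded]{$\mathcal{A}^{\Psi}_{\infty}$}) and the local integrability of $h$ enter.
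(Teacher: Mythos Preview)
Your argument is correct and follows the same overall strategy as the paper (a Banach fixed-point scheme for a Volterra-type operator), but the implementation differs in two ways worth noting. First, the paper carries out the contraction in $L^{\infty}([0,T])$ for a small $T$ with ${\rm Lip}(\Psi)\int_0^T|h|\le 1/2$, iterates over successive short intervals to build a locally bounded solution, and only \emph{afterwards} proves continuity of $X_u$ by a separate Gr\"onwall-type estimate (their Lemma~\ref{lem:Gronwall:Picard}). You instead work directly in $\mathcal{C}([0,T])$ and observe that $\Gamma(x)$ is a convolution of an $L^1$ kernel with a bounded function, hence continuous; this makes continuity automatic and saves the third step of the paper's proof. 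Second, you use a Bielecki norm to obtain a contraction on an arbitrary $[0,T]$ in one shot, whereas the paper uses the small-interval iteration (which you also mention parenthetically). Both devices are standard and equivalent here; your route is a little tidier, while the paper's has the minor advantage that its first step (the a~priori bound $|X_u(t)|\le\|\Psi\|_\infty\int_0^t|h|$) makes explicit why the uniqueness class is ``locally bounded'' rather than merely ``continuous''.
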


\begin{proof}
The proof is divided in three steps:

\medskip
-1. Establish a priori estimates on $X_{u}$ to show that it is locally bounded. Indeed, using the the fact that $u$ belongs to $\mathcal{BC}(\mathbb{R}_{+},\mathcal{P}(\mathbb{R}_{+}))$ and the boundedness of $\Psi$, one deduces that
\begin{equation*}
X_{u}(t)\leq ||\Psi||_{\infty} \int_{z=0}^{t} \int_{s=0}^{+\infty} |h(t-z)| u\left(z,ds\right) dz = ||\Psi||_{\infty} \int_{z=0}^{t} |h(t-z)| dz.
\end{equation*}
Hence, the local integrability of $h$ implies the local boundedness of $X_{u}$.

\medskip
-2. Show that $X_{u}$ exists and is unique as a fixed point. For any $T>0$, consider $G_{T}: L^{\infty}([0,T]) \to L^{\infty}([0,T])$ defined, for all $X$ in $L^{\infty}([0,T])$, by
\begin{equation*}
G_{T}(X):=  \left( t\mapsto \int_{z=0}^{t} \int_{s=0}^{+\infty} h(t-z) \Psi\left(s,   X(z) + f_{0}(z) \right)u\left(z,ds\right) dz \right).
\end{equation*}
The Lipschitz continuity of $\Psi$ and the fact that $u(z,\cdot)$ is a probability lead, for any $X_{1},X_{2}$ in $L^{\infty}([0,T])$ and $t$ in $[0,T]$, to
\begin{eqnarray*}
\left| G_{T}(X_{1})(t) -G_{T}(X_{2})(t) \right| &\leq & {\rm Lip}(\Psi) \int_{0}^{t} |h(t-z)| \left| X_{1}(z)-X_{2}(z) \right| dz\\
& \leq & {\rm Lip}(\Psi) || X_{1}-X_{2} ||_{L^{\infty}([0,T])} \int_{0}^{T} |h(z)| dz.
\end{eqnarray*}
Fix $T>0$ such that ${\rm Lip}(\Psi) \int_{0}^{T} h(z) dz \leq 1/2$ so that $G_{T}$ is a contraction and admits a unique fixed point. Iterating this fixed point gives the existence and uniqueness of $X_{u}$ in the space of locally bounded functions. 

For instance, we give the idea for the first iteration. Denoting $W$ the fixed point of $G_{T}$, one can consider $G^{W}_{2T}: L^{\infty}([T,2T]) \to L^{\infty}([T,2T])$ defined, for all $X$ in $L^{\infty}([T,2T])$, by
\begin{equation*}
G^{W}_{2T}(X):=\left(\displaystyle t\mapsto \int_{z=0}^{t} \int_{s=0}^{+\infty} h(t-z) \Psi\left(s,   \tilde{X}(z) + f_{0}(z) \right)u\left(z,ds\right)dz \right),
\end{equation*}
where $\tilde{X}(t)=W(t)$ if $0\leq t <T$, $\tilde{X}(t)=X(t)$ if $T\leq t \leq 2T$,  and $\tilde{X}(t)=0$ otherwise. Applying the same argument as for the fixed point of $G_{T}$ leads to existence and uniqueness of the trace of $X_{u}$ on $[0,2T]$.

\medskip
-3. Finally, let us show that $X_{u}$ is continuous thanks to a generalized Gr\"onwall lemma. Using the Lipschitz continuity and the boundedness of $\Psi$, one deduces from \eqref{eq:fixed:point:Xu} that
\begin{multline*}
\left| X_{u}(t+t') - X_{u}(t) \right|\leq  ||\Psi||_{\infty}  \int_{t}^{t+t'} |h(y)| dy  \\
+  {\rm Lip}(\Psi) \int_{0}^{t} |h(y)| | (X_{u}+f_{0})(t-y) - (X_{u}+f_{0})(t+t'-y)| dy .
\end{multline*}
This means that the function $Y^{(t')}_{u}:= |X_{u}(\cdot+t') - X_{u}(\cdot)|$ satisfies
\begin{equation*}
Y_{u}^{(t')}\leq g^{(t')}(t) + {\rm Lip}(\Psi) \int_{0}^{t} |h(t-z)| Y_{u}^{(t')}(z) dz
\end{equation*}
where $g^{(t')}(t):={\rm Lip}(\Psi) \int_{0}^{t} |h(t-z)| | f_{0}(z+t') - f_{0}(z)| dz+ ||\Psi||_{\infty} \int_{t}^{t+t'} |h(y)| dy $. Applying Lemma \ref{lem:Gronwall:Picard} gives, for any $T>0$, $\sup_{t\in [0,T]} Y_{u}^{(t')} (t)\leq C_{T} \sup_{t\in [0,T]} g^{(t')}(t)$. Yet the continuity (hence uniform continuity on compact time intervals) of $f_{0}$ and the local integrability of $h$ gives that $\sup_{t\in [0,T]} g^{(t')}(t)$ goes to $0$ as $t'$ goes to $0$.
\end{proof}

Now, we have all the ingredients to state the existence/uniqueness result for the \eqref{eq:PPS:intro} system in a measure space of possible solutions.
Notice that the existence/uniqueness result used for the linear system would not directly apply to the non-linear system. In that sense, we extend the result stated in \cite{canizo2013measure}. Let us also mention that our argument is reminiscent of what is called weak-strong uniqueness for measure-valued solutions \cite{brenier2011weak,diperna1987oscillations}: namely prove uniqueness in a large (measure) space and existence in a smaller (smooth) space.

\begin{thm}\label{thm:existence:uniqueness:PPS} 
Under (\hyperref[ass:Psi:Lipschitz]{$\mathcal{A}^{\Psi}_{\rm Lip}$}) and (\hyperref[ass:Psi:uniformly:bounded]{$\mathcal{A}^{\Psi}_{\infty}$}), assume that $h:\mathbb{R}_{+}\to \mathbb{R}$ is locally integrable and that $f_{0}:\mathbb{R}_{+}\to \mathbb{R}$ is continuous. Assume that $u^{\rm in}$ is a non-negative function such that both $\int_{0}^{+\infty} u^{\rm in} (s) ds=1$ and there exists $M>0$ such that for all $s\geq 0,$ $0\leq u^{\rm in}(s)\leq M$. 

Then, there exists a unique solution in the weak sense $u$ such that $t\mapsto u(t,\cdot)$ belongs to $\mathcal{BC}(\mathbb{R}_{+},\mathcal{P}(\mathbb{R}_{+}))$ of the following (PPS) system
\begin{equation}\label{eq:edp:PPS}
\begin{cases} 
\displaystyle \frac{\partial u\left(t,s\right)}{\partial t}+\frac{\partial u\left(t,s\right)}{\partial s} +\Psi\left(s,  X(t)  + f_{0}(t) \right) u\left(t,s\right)=0, \\
\displaystyle u\left(t,0\right)=\int_{s\in \mathbb{R}_{+}} \Psi\left(s,   X(t) + f_{0}(t) \right)u\left(t,s\right)ds ,
\end{cases}
\end{equation}
with initial condition that $u(0,\cdot)=u^{\rm in}$, where for all $t\geq 0$, $X(t)=\int_{0}^{t} h(t-z) u(z,0)dz$.

Moreover, the solution $u$ is such that, for all $t\geq 0$, the measure $u(t,\cdot)$ is a probability and admits a density which is identified to the solution itself. Furthermore, the function $t\mapsto u(t,\cdot)$ belongs to $\mathcal{C}(\mathbb{R}_{+},L^{1}(\mathbb{R}_{+}))$, the function $t\mapsto u(t,0)$ is continuous and
\begin{equation}\label{eq:apriori:bound:u}
0\leq u(t,s)\leq \max(M,|| \Psi ||_{\infty}), \quad \text{for all $t,s\geq 0$.}
\end{equation}
\end{thm}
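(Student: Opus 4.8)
The plan is to deduce Theorem~\ref{thm:existence:uniqueness:PPS} from the linear theory (Propositions~\ref{prop:existence:uniqueness:PPS:linear}, \ref{prop:characteristics:PPS:linear}, \ref{prop:properties:solution:PPS:linear}) via a fixed point argument on the non-linearity carried by $X$. The key observation is that, given a candidate activity profile, the \eqref{eq:PPS:intro} system becomes a \emph{linear} system of the form \eqref{eq:edp:PPS:linear} with $f(t,s):=\Psi(s,X(t)+f_{0}(t))$. First I would set up the map $\Gamma$ on a suitable complete metric space and show it has a unique fixed point, then identify that fixed point with the solution of \eqref{eq:edp:PPS}, and finally transfer the regularity and a priori bounds from the linear theory.

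\textbf{Step 1 (the functional framework).} For $u$ in $\mathcal{BC}(\mathbb{R}_{+},\mathcal{P}(\mathbb{R}_{+}))$, Lemma~\ref{lem:Xu:well:defined} provides a unique continuous $X_{u}$ solving \eqref{eq:fixed:point:Xu}; note that $X_{u}(t)=\int_{0}^{t}h(t-z)u(z,0)\,dz$ once $u$ solves \eqref{eq:edp:PPS}, because the boundary relation in \eqref{eq:edp:PPS} gives $u(z,0)=\int\Psi(s,X_{u}(z)+f_{0}(z))u(z,ds)$. Set $f_{u}(t,s):=\Psi(s,X_{u}(t)+f_{0}(t))$. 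By (\hyperref[ass:Psi:Lipschitz]{$\mathcal{A}^{\Psi}_{\rm Lip}$}), (\hyperref[ass:Psi:uniformly:bounded]{$\mathcal{A}^{\Psi}_{\infty}$}) and continuity of $X_{u}$ and $f_{0}$, the function $f_{u}$ is bounded by $\|\Psi\|_{\infty}$ and continuous in $t$ uniformly in $s$, so Proposition~\ref{prop:existence:uniqueness:PPS:linear} yields a unique weak solution $\mathcal{L}(u)\in\mathcal{BC}(\mathbb{R}_{+},\mathcal{M}(\mathbb{R}_{+}))$ of \eqref{eq:edp:PPS:linear} with datum $u^{\rm in}$; Proposition~\ref{prop:characteristics:PPS:linear} (applicable since $u^{\rm in}$ is a probability density) shows $\mathcal{L}(u)(t,\cdot)$ is a probability density for every $t$, so $\Gamma(u):=\mathcal{L}(u)$ maps into $\mathcal{BC}(\mathbb{R}_{+},\mathcal{P}(\mathbb{R}_{+}))$.

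\textbf{Step 2 (contraction on short time).} Fix $T>0$ and work on $\mathcal{C}([0,T],\mathcal{P}(\mathbb{R}_{+}))$ with the $\tilde{W}_{1}$ distance \eqref{eq:def:modified:Wasserstein:distance}, equivalently the bounded Lipschitz norm. For $u,v$ in this space, I would estimate $\tilde{W}_{1}(\Gamma(u)(t,\cdot),\Gamma(v)(t,\cdot))$. Using the characteristics representation \eqref{eq:characteristics:s>t}--\eqref{eq:characteristics:t>s} one compares the two linear flows; the difference is controlled by $\int_{0}^{t}\sup_{s}|f_{u}(t',s)-f_{v}(t',s)|\,dt'$ plus a term from the boundary functions $u_{0}$, and $\sup_{s}|f_{u}(t',s)-f_{v}(t',s)|\le {\rm Lip}(\Psi)|X_{u}(t')-X_{v}(t')|$. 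It then remains to bound $|X_{u}-X_{v}|$ in terms of $\tilde{W}_{1}(u,v)$: from \eqref{eq:fixed:point:Xu}, a Grönwall-type argument (Lemma~\ref{lem:Gronwall:Picard}) using that $x\mapsto\Psi(s,x)$ is Lipschitz and $\Psi$ bounded gives $\sup_{[0,T]}|X_{u}-X_{v}|\le C_{T}\sup_{[0,T]}\tilde{W}_{1}(u(t,\cdot),v(t,\cdot))$. Combining, $\Gamma$ is Lipschitz on $[0,T]$ with constant $\le C'_{T}\,T$, hence a contraction for $T$ small; a unique fixed point follows, and one iterates on $[T,2T],[2T,3T],\dots$ exactly as in the proof of Lemma~\ref{lem:Xu:well:defined} to get a global solution in $\mathcal{BC}(\mathbb{R}_{+},\mathcal{P}(\mathbb{R}_{+}))$.

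\textbf{Step 3 (identification, regularity, uniqueness).} The fixed point $u=\Gamma(u)$ solves \eqref{eq:edp:PPS:linear} with $f=f_{u}$, and since $X_{u}$ by Lemma~\ref{lem:Xu:well:defined} coincides with $\int_{0}^{t}h(t-z)u(z,0)\,dz$ (using the boundary identity, which holds in the strong sense by Proposition~\ref{prop:characteristics:PPS:linear}), $u$ solves \eqref{eq:edp:PPS} in the weak sense. Propositions~\ref{prop:characteristics:PPS:linear} and \ref{prop:properties:solution:PPS:linear} then deliver, applied with $\|f_{u}\|_{\infty}\le\|\Psi\|_{\infty}$: the density property, $t\mapsto u(t,\cdot)\in\mathcal{C}(\mathbb{R}_{+},L^{1}(\mathbb{R}_{+}))$, continuity of $t\mapsto u(t,0)$, and the bound \eqref{eq:apriori:bound:u}. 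For uniqueness in $\mathcal{BC}(\mathbb{R}_{+},\mathcal{P}(\mathbb{R}_{+}))$: any such weak solution $\tilde u$ has $X_{\tilde u}$ continuous (Lemma~\ref{lem:Xu:well:defined}), hence is a weak solution of the \emph{linear} system with $f=f_{\tilde u}$, hence equals $\Gamma(\tilde u)$ by the linear uniqueness of Proposition~\ref{prop:existence:uniqueness:PPS:linear}; so $\tilde u$ is a fixed point of $\Gamma$, and Step~2 forces $\tilde u=u$.

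\textbf{Main obstacle.} The delicate point is Step~2: getting a \emph{quantitative} $\tilde{W}_{1}$-contraction estimate for $\Gamma$ rather than mere continuity. One must (i) control the dependence of the linear flow on the coefficient $f$ in the bounded Lipschitz / $\tilde{W}_{1}$ metric — most cleanly through the characteristics, being careful with the boundary term $u_{0}(t-s)$ whose stability is itself governed by the renewal (boundary) equation — and (ii) close the loop via the Grönwall bound on $|X_{u}-X_{v}|$, where the boundedness of $\Psi$ is what prevents the activity from blowing up. Everything else is a routine transcription of the linear results and the fixed-point iteration already used in Lemma~\ref{lem:Xu:well:defined}.
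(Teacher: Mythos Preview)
Your overall strategy---linearize, then close via a fixed point---is the right one, but there is a genuine gap in Step~2 at the point where you claim
\[
\sup_{[0,T]}|X_{u}-X_{v}|\le C_{T}\sup_{[0,T]}\tilde W_{1}\bigl(u(t,\cdot),v(t,\cdot)\bigr).
\]
From \eqref{eq:fixed:point:Xu} the difference $X_{u}(t)-X_{v}(t)$ contains the term
\[
\int_{0}^{t}h(t-z)\int_{0}^{\infty}\Psi\bigl(s,X_{u}(z)+f_{0}(z)\bigr)\,\bigl(u(z,ds)-v(z,ds)\bigr)\,dz,
\]
and to control $\int\Psi(\cdot,x)\,d(u-v)$ by the bounded Lipschitz norm you need $s\mapsto\Psi(s,x)$ to be Lipschitz (or at least uniformly continuous) in $s$. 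No such regularity is assumed: (\hyperref[ass:Psi:Lipschitz]{$\mathcal{A}^{\Psi}_{\rm Lip}$}) is Lipschitz in the \emph{second} variable only, and the paper explicitly allows discontinuities in $s$, e.g.\ the refractory case $\Psi(s,x)=\Phi(x)\mathds{1}_{s\ge\delta}$ of \eqref{eq:example:HPRP}. So the Gr\"onwall loop in Step~2(ii) cannot close in the $\tilde W_{1}$ metric as stated.

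The paper avoids this obstacle by running the fixed point not on the measure $u$ but on the scalar activity profile: for each continuous $Y$ one solves the linear system \eqref{eq:edp:PPS:frozen} to get $u_{Y}$, and then contracts the map $F_{T}:Y\mapsto\bigl(t\mapsto\int_{0}^{t}h(t-z)u_{Y}(z,0)\,dz\bigr)$ on $\mathcal{C}([0,T])$ with the sup norm. The contraction estimate \eqref{eq:F:contraction} is obtained by comparing the boundary traces $u_{Y_{1}}(\cdot,0)$ and $u_{Y_{2}}(\cdot,0)$ directly through the characteristics \eqref{eq:characteristics:s>t}--\eqref{eq:characteristics:t>s}; this uses only Lipschitzness of $\Psi$ in the second variable, boundedness of $\Psi$, and $\int u^{\rm in}=1$, never any regularity in $s$. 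Once the fixed point $W$ is found, $u_{W}$ is the solution and the regularity/uniqueness statements follow from the linear Propositions exactly as you outline in Step~3. If you want to salvage your measure-level fixed point, one way is to restrict $\Gamma$ to bounded densities and use the $L^{1}$ metric instead of $\tilde W_{1}$ (then $|\int\Psi(\cdot,x)(u-v)|\le\|\Psi\|_{\infty}\|u-v\|_{1}$ without any $s$-regularity), but this is a different argument from what you wrote.
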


\begin{rem}
The weak sense means here that for every $\varphi$ in $\mathcal{C}^{\infty}_{c,b}(\mathbb{R}_{+}^{2})$,
\begin{multline}\label{eq:edp:PPS:weak:sense}
\int_{\mathbb{R}_{+}^{2}} \left(\frac{\partial}{\partial t}+\frac{\partial}{\partial s}\right) \varphi\left(t,s\right)u\left( t, ds\right) dt +\int_{\mathbb{R}_{+}} \varphi(0,s) u^{\rm in}(s)ds  \\
+ \int_{\mathbb{R}_{+}^{2}} [\varphi(t,0)-\varphi(t,s) ] \Psi\left(s,   X(t) + f_{0}(t) \right) u(t, ds)dt=0
\end{multline}
where $X$ is the continuous function given by Lemma \ref{lem:Xu:well:defined} and satisfying
\begin{equation}\label{eq:fixed:point:X}
X(t)=\int_{z=0}^{t} \int_{s=0}^{+\infty} h(t-z) \Psi\left(s,   X(z) + f_{0}(z) \right)u\left(z,ds\right)dz.
\end{equation}
\end{rem}

As for the linear case, the system is mass-conservative (e.g. take a sequence of functions converging to $t\mapsto \mathds{1}_{[0,T]}(t)$ as test functions in the weak equation \eqref{eq:edp:PPS:weak:sense}).

\begin{proof}
The proof is divided in two steps. First, we apply the results of Section \ref{sec:linear:system} to a linearised version of the non-linear system \eqref{eq:edp:PPS} and then we find the auxiliary function $X$ corresponding to the solution $u$ as a fixed point in a space of continuous functions in order to deal with the non linearity of the system \eqref{eq:edp:PPS}.

\medskip
-1. The linearised version of the system takes the form
\begin{equation}\label{eq:edp:PPS:frozen}
\begin{cases} 
\displaystyle \frac{\partial u\left(t,s\right)}{\partial t}+\frac{\partial u\left(t,s\right)}{\partial s} +\Psi\left(s,  Y(t)  + f_{0}(t) \right) u\left(t,s\right)=0, \\
\displaystyle u\left(t,0\right)=\int_{s\in \mathbb{R}_{+}} \Psi\left(s,   Y(t) + f_{0}(t) \right)u\left(t,s\right)ds ,
\end{cases}
\end{equation}
for a fixed continuous function $Y$. Note that the function $f:(t,s)\mapsto \Psi\left(s,   Y(t) + f_{0}(t) \right)$ is bounded and continuous (uniformly in $s$)  with respect to $t$. So the assumptions of Propositions \ref{prop:existence:uniqueness:PPS:linear}, \ref{prop:characteristics:PPS:linear} and \ref{prop:properties:solution:PPS:linear} are satisfied. In particular, for any continuous function $Y$, there exists a unique solution $u_{Y}$ (with initial condition $u^{\rm in}$) in $\mathcal{BC}(\mathbb{R}_{+},\mathcal{P}(\mathbb{R}_{+}))\subset \mathcal{BC}(\mathbb{R}_{+},\mathcal{P}(\mathbb{M}_{+}))$ of the system \eqref{eq:edp:PPS:frozen} which furthermore satisfies the properties listed in Proposition \ref{prop:properties:solution:PPS:linear}.

\medskip
-2. Let us notice that for all $T>0$, if $Y$ belongs to the Banach space $(\mathcal{C}([0,T]), ||.||_{\infty,[0,T]})$ then $t\mapsto \int_{0}^{t} h(t-z) u_{Y}(z,0)dz$ belongs to $\mathcal{C}([0,T])$ too. Indeed, recall that $t\mapsto u_{Y}(t,0)$ is continuous thanks to Proposition \ref{prop:properties:solution:PPS:linear} and that $\int_{0}^{t} h(t-z) u_{Y}(z,0)dz=\int_{0}^{t} h(z) u_{Y}(t-z,0)dz$.
Hence one can consider the map
\begin{equation}\label{eq:def:FT}
\begin{array}{cccc}
F_{T}:\ & \mathcal{C}([0,T]) &\longrightarrow& \mathcal{C}([0,T])\\
 & Y &\longmapsto &\left(\displaystyle t\mapsto \int_{0}^{t} h(t-z) u_{Y}(z,0)dz \right),
\end{array}
\end{equation}
and show that it admits a fixed point for a good choice of $T$.
Computations given in  \ref{sec:proof:thm:existence:uniqueness:PPS} provide the following statement
\begin{equation}\label{eq:F:contraction}
\exists\, T>0, \forall\, Y_{1},Y_{2}\in L^{\infty}([0,T]), 
\quad || F_{T}(Y_{1}) - F_{T}(Y_{2}) ||_{L^{\infty}([0,T])} \leq \frac{1}{2} || Y_{1}-Y_{2} ||_{L^{\infty}([0,T])},
\end{equation}
where $T$ depends neither on the initial condition nor on $f_{0}$. 

Until the end of the proof, let fix such a $T$. Then, there exists a unique $W$ in $\mathcal{C}([0,T])$ such that $F_{T}(W)=W$. In particular, $u_{W}$ is a solution of \eqref{eq:edp:PPS} on $[0,T]$ so we have existence on $[0,T]$.

For the uniqueness, let us consider the trace (on $[0,T]$) of a solution $u\in \mathcal{BC}(\mathbb{R}_{+},\mathcal{P}(\mathbb{R}_{+}))$ of \eqref{eq:edp:PPS}. Then, the auxiliary function $X_{u}$ associated with $u$ defined in Lemma \ref{lem:Xu:well:defined} is continuous. Since $u$ is a solution of \eqref{eq:edp:PPS}, the trace of $X_{u}$ on $[0,T]$ is a fixed point of $F_{T}$ and so $X_{u}=W$ and $u=u_{W}$. This gives the uniqueness of the solution of \eqref{eq:edp:PPS} on $[0,T]$ in $\mathcal{BC}(\mathbb{R}_{+},\mathcal{P}(\mathbb{R}_{+}))$.

Taking $u_{W}(T,\cdot)$ instead of $u^{\rm in}$ as initial condition, the function $t\mapsto f_{0}(t+T) + \int_{0}^{T} h(t-z)u_{W}(z,0) dz$ instead of $f_{0}$ and applying the same kind of fixed point argument gives the trace of the solution on $[T,2T]$. Iterating this fixed point argument, one deduces that there exists a unique solution $u$ of \eqref{eq:edp:PPS} on $\mathbb{R}_{+}$ (remind that $T$ depends neither on the initial condition nor on $f_{0}$). In particular, the iteration is possible since the boundedness of the initial condition is carried on by the equation (see Equation \eqref{eq:apriori:bound:u:linear}).

The regularity and boundedness of the solution $u$, i.e. the continuity properties and Equation \eqref{eq:apriori:bound:u} listed at the end of the statement, come from the regularity and boundedness of the solutions $u_{Y}$ since $u$ is one of the $u_{Y}$'s.

\end{proof}

\subsection{Limit process}

The limit equation \eqref{eq:limit:equation:counting:process} describes an age dependent point process interacting with its own mean intensity in an Hawkes manner. More precisely, a solution $(\overline{N}_{t})_{t\geq 0}$ of \eqref{eq:limit:equation:counting:process}, if it exists, admits an intensity $\overline{\lambda}_{t}$ which depends on the time $t$ and the age $\overline{S}_{t-}$ in a McKean-Vlasov manner in the sense that it satisfies the following implicit equation $\overline{\lambda}_{t}=\Psi(\overline{S}_{t-},  \int_{0}^{t'-} h(t'-z) \mathbb{E} \left[\overline{\lambda}_{z} \right] dz + f_{0}(t') )$. Equation \eqref{eq:limit:equation:counting:process} is in particular a non trivial fixed point problem of the McKean-Vlasov type.
Notice that the dependence of $\overline{N}$ with respect to the past $\overline{N}_{-}$ is reduced to the age at time $0$, i.e. $\overline{S}_{0-}=-T_{0}$.
Throughout this article, a solution of the limit equation is called a \emph{point process of the McKean-Vlasov type whose intensity depends on time and on the age}.

The fixed point problem of the limit equation \eqref{eq:limit:equation:counting:process} is proved to be well-posed in two cases as given in the next two statements. In either case, the idea of the proof is first to compute the mean intensity (denoted by $\overline{\lambda}(t)$) of a possible solution of \eqref{eq:limit:equation:counting:process}. The next two propositions state the same result under different sets of assumptions and can be summarized as follows:
\begin{itemize}
\item in the first case, the intensity is bounded, i.e. $\Psi$ satisfies (\hyperref[ass:Psi:uniformly:bounded]{$\mathcal{A}^{\Psi}_{\infty}$}), and the mean intensity is given by the system \eqref{eq:edp:PPS}. More precisely, $\overline{\lambda}(t)=u(t,0)$  with $u$ given by Theorem \ref{thm:existence:uniqueness:PPS}.
\item in the second case, the intensity does not depend on the age process, i.e. $\Psi$ satisfies (\hyperref[ass:Psi:=Psi0]{$\mathcal{A}_{\Psi=\Psi_{0}}$}), and the mean intensity is given by a generalization of \cite[Lemma 24]{delattre2016}.
\end{itemize}

\begin{prop}\label{prop:limit:equation:well:posed:bounded:intensity}
Under (\hyperref[ass:initial:condition:density]{$\mathcal{A}^{\zeta_{N_{-}}}_{u^{\rm in}}$}), (\hyperref[ass:Psi:Lipschitz]{$\mathcal{A}^{\Psi}_{\rm Lip}$}) and (\hyperref[ass:Psi:uniformly:bounded]{$\mathcal{A}^{\Psi}_{\infty}$}), assume that $h:\mathbb{R}_{+}\to \mathbb{R}$ is locally integrable and that $f_{0}:\mathbb{R}_{+}\to \mathbb{R}$ is continuous. Denote by $u$ the unique solution of \eqref{eq:edp:PPS} with initial condition $u^{\rm in}$ as given by Theorem \ref{thm:existence:uniqueness:PPS} and let, for all $t\geq 0$, $\overline{\lambda}(t):=u(t,0)$. Then, the following statements hold
\begin{enumerate}[(i)]
\item if $(\overline{N}_t)_{t\geq 0}$ is a solution of \eqref{eq:limit:equation:counting:process} then $\mathbb{E} \left[\overline{N}_{+}(dt) \right]=\overline{\lambda}(t)dt$,
\item there exists a unique (once $\Pi$ and $\overline{N}_{-}$ are fixed) solution $(\overline{N}_t)_{t\geq 0}$ of the following system
\begin{equation}\label{eq:limit:equation:counting:process:rewritten:PPS}
\left\{
\begin{aligned}
&\overline{N}_t= \int_0^t \int_0^\infty \mathds{1}_{\Big\{ x\leq \Psi\left( \overline{S}_{t'-},  \int_{0}^{t'} h(t'-z) \overline{\lambda}(z)dz + f_{0}(t') \right)  \Big\}}  \, \Pi(dt',dx), \\
&\mathbb{E}\left[ \overline{N}_{t} \right] = \int_{0}^{t} \overline{\lambda}(t') dt',
\end{aligned}
\right.
\end{equation}
where $(\overline{S}_{t-})_{t\geq 0}$ is the predictable age process associated with $\overline{N}=\overline{N}_{-}\cup \overline{N}_{+}$ where $\overline{N}_{-}$ is a point process distributed according to $\zeta_{N_{-}}$ and $\overline{N}_{+}$ is the point process associated with the counting process $(\overline{N}_t)_{ t \geq 0}$
\end{enumerate}
In particular, $\overline{\lambda}$ is a continuous function satisfying 
\begin{equation}\label{eq:mckean:vlasov:intensity}
\overline{\lambda}(t)=\mathbb{E}\left[ \Psi\left( \overline{S}_{t-},  \int_{0}^{t} h(t-z) \overline{\lambda}(z)dz + f_{0}(t) \right)  \right]
\end{equation}
and the solution of \eqref{eq:limit:equation:counting:process:rewritten:PPS} is the unique (once $\Pi$ and $\overline{N}_{-}$ are fixed) solution of \eqref{eq:limit:equation:counting:process}.
\end{prop}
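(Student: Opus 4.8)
The plan is to build everything on one deterministic--probabilistic bridge, which I would isolate first as a lemma. Suppose $\overline{N}$ is a non-exploding point process on $\mathbb{R}$ whose $\mathbb{F}$-intensity on $(0,+\infty)$ is $\Psi(\overline{S}_{t-},g(t))$ for some \emph{fixed} continuous function $g:\mathbb{R}_{+}\to\mathbb{R}$, with $-T_{0}$ distributed with density $u^{\mathrm{in}}$. Then the curve of marginal laws $t\mapsto u(t,\cdot):=\mathcal{L}(\overline{S}_{t})$ belongs to $\mathcal{BC}(\mathbb{R}_{+},\mathcal{P}(\mathbb{R}_{+}))$ and is a weak solution of the linear transport system \eqref{eq:edp:PPS:linear} with $f(t,s)=\Psi(s,g(t))$ and initial datum $u^{\mathrm{in}}$. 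I would prove this by a Dynkin computation: for $\varphi\in\mathcal{C}^{\infty}_{c,b}(\mathbb{R}_{+}^{2})$, the age has unit drift between jumps and is reset to $0$ at each point of $\overline{N}_{+}$, so
\[
\varphi(t,\overline{S}_{t})=\varphi(0,\overline{S}_{0})+\int_{0}^{t}\Big(\tfrac{\partial}{\partial t}+\tfrac{\partial}{\partial s}\Big)\varphi(t',\overline{S}_{t'})\,dt'+\int_{0}^{t}\big[\varphi(t',0)-\varphi(t',\overline{S}_{t'-})\big]\,\overline{N}_{+}(dt');
\]
taking expectations, compensating $\overline{N}_{+}(dt')$ by $\Psi(\overline{S}_{t'-},g(t'))\,dt'$, using $\overline{S}_{t'-}=\overline{S}_{t'}$ a.s.\ and picking $t$ beyond the time-support of $\varphi$ yields exactly the weak formulation \eqref{eq:edp:PPS:linear:weak:sense}; all the integrability is supplied by $\varphi$ and its derivatives being bounded together with $\|\Psi\|_{\infty}<\infty$ under $(\mathcal{A}^{\Psi}_{\infty})$. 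Uniqueness of such a weak solution is then Proposition \ref{prop:existence:uniqueness:PPS:linear} (applicable since, by $(\mathcal{A}^{\Psi}_{\mathrm{Lip}})$ and $(\mathcal{A}^{\Psi}_{\infty})$, $f$ is bounded and continuous in $t$ uniformly in $s$), and the boundary relation of \eqref{eq:edp:PPS:linear} gives $u(t,0)=\int_{0}^{\infty}\Psi(s,g(t))\,u(t,ds)=\mathbb{E}[\Psi(\overline{S}_{t-},g(t))]=\mathbb{E}[\overline{\lambda}_{t}]$.

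\emph{Proof of (i).} Let $(\overline{N}_{t})$ solve \eqref{eq:limit:equation:counting:process}. From the thinning representation it is dominated by $\Pi(\cdot\times[0,\|\Psi\|_{\infty}])$, hence does not explode and $\mathbb{E}[\overline{N}_{t}]\le\|\Psi\|_{\infty}t$; in particular $\mathbb{E}[\overline{N}_{+}(dz)]$ is an atomless locally finite measure, so $\int_{0}^{t-}h(t-z)\,\mathbb{E}[\overline{N}_{+}(dz)]=\int_{0}^{t}h(t-z)\,\mathbb{E}[\overline{N}_{+}(dz)]$ and $g(t):=\int_{0}^{t}h(t-z)\,\mathbb{E}[\overline{N}_{+}(dz)]+f_{0}(t)$ is continuous (local integrability of $h$, boundedness of the mean intensity, continuity of $f_{0}$). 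Thus $\overline{N}$ has intensity $\Psi(\overline{S}_{t-},g(t))$ with $g$ continuous, so the bridge lemma applies: $u(t,\cdot):=\mathcal{L}(\overline{S}_{t})$ solves \eqref{eq:edp:PPS:linear} with $f(t,s)=\Psi(s,g(t))$, and the martingale property together with $u(t,0)=\mathbb{E}[\overline{\lambda}_{t}]$ gives $\mathbb{E}[\overline{N}_{+}(dt)]=\mathbb{E}[\overline{\lambda}_{t}]\,dt=u(t,0)\,dt$. Plugging this back into $g$ shows $g(t)=\int_{0}^{t}h(t-z)\,u(z,0)\,dz+f_{0}(t)$, i.e.\ $u$ solves the \emph{non-linear} system \eqref{eq:edp:PPS} with the same initial datum $u^{\mathrm{in}}$ (bounded by $(\mathcal{A}^{\zeta_{N_{-}}}_{u^{\mathrm{in}}})$). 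By the uniqueness part of Theorem \ref{thm:existence:uniqueness:PPS}, $u$ coincides with the solution there, hence $u(t,0)=\overline{\lambda}(t)$ and $\mathbb{E}[\overline{N}_{+}(dt)]=\overline{\lambda}(t)\,dt$.

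\emph{Proof of (ii) and of the final assertions.} Here $\overline{\lambda}$ is the fixed continuous function $t\mapsto u(t,0)$ from Theorem \ref{thm:existence:uniqueness:PPS}, so $g(t):=\int_{0}^{t}h(t-z)\,\overline{\lambda}(z)\,dz+f_{0}(t)$ is a deterministic continuous function. The first line of \eqref{eq:limit:equation:counting:process:rewritten:PPS} is then an ordinary thinning equation driven by a deterministic intensity field: once $\Pi$ and $\overline{N}_{-}$ are fixed, its jump times are obtained by the explicit Ogata construction (before the first jump the age is the deterministic curve $t'\mapsto t'-T_{0}$, so the first jump is the first atom of $\Pi$ below $t'\mapsto\Psi(t'-T_{0},g(t'))$, and one iterates), which does not explode since $\Psi$ is bounded; this gives existence and pathwise uniqueness of $(\overline{N}_{t})$ solving that first line, exactly as in Proposition \ref{prop:Thinning:well:posed}. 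To check the second line, apply the bridge lemma to this $\overline{N}$: its age marginal $u(t,\cdot):=\mathcal{L}(\overline{S}_{t})$ solves \eqref{eq:edp:PPS:linear} with $f(t,s)=\Psi(s,g(t))$ and datum $u^{\mathrm{in}}$; but the solution of Theorem \ref{thm:existence:uniqueness:PPS} solves this \emph{same} linear system (its frozen non-linearity being precisely this $g$) with the same datum, so by uniqueness for the linear system (Proposition \ref{prop:existence:uniqueness:PPS:linear}) the two coincide, whence $u(t,0)=\overline{\lambda}(t)$ and $\mathbb{E}[\overline{N}_{t}]=\int_{0}^{t}\mathbb{E}[\overline{\lambda}_{t'}]\,dt'=\int_{0}^{t}u(t',0)\,dt'=\int_{0}^{t}\overline{\lambda}(t')\,dt'$, which is the second line. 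Uniqueness of the whole system follows from the pathwise uniqueness of its first line. Equation \eqref{eq:mckean:vlasov:intensity} is the boundary relation of \eqref{eq:edp:PPS} read through $\overline{\lambda}(t)=u(t,0)$ and $\overline{S}_{t-}\sim u(t,\cdot)$. Finally, by (i) any solution of \eqref{eq:limit:equation:counting:process} satisfies $\mathbb{E}[\overline{N}_{+}(dz)]=\overline{\lambda}(z)\,dz$, hence also solves \eqref{eq:limit:equation:counting:process:rewritten:PPS}; conversely the argument just given shows that any solution of \eqref{eq:limit:equation:counting:process:rewritten:PPS} has $\mathbb{E}[\overline{N}_{+}(dz)]=\overline{\lambda}(z)\,dz$ and therefore solves \eqref{eq:limit:equation:counting:process}. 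Combined with (ii), this yields the claimed equivalence and uniqueness.

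The only genuinely delicate point is the bridge lemma, i.e.\ identifying the flow of age marginals of a point process with a prescribed age-dependent intensity as \emph{the} weak solution of the linear transport system. The subtlety lies in making the change-of-variables / Dynkin formula for $\varphi(t,\overline{S}_{t})$ rigorous with a time-dependent test function, in checking that the relevant expectations are finite, and in justifying the compensation of $\overline{N}_{+}(dt')$ by $\Psi(\overline{S}_{t'-},g(t'))\,dt'$ on the time window where $\varphi$ is supported; this is standard piecewise-deterministic-Markov-process bookkeeping (comparable to the computations used in \cite{chevallier2015microscopic}) and is rendered harmless by the boundedness of $\Psi$. Everything else is a matter of feeding the outcome into the two uniqueness statements already available — Proposition \ref{prop:existence:uniqueness:PPS:linear} for the linear system and Theorem \ref{thm:existence:uniqueness:PPS} for \eqref{eq:edp:PPS} — together with the elementary thinning construction.
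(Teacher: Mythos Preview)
Your proof is correct and follows essentially the same route as the paper: in both cases one shows that the law of the age of any solution satisfies the linear transport system, identifies this with the nonlinear system \eqref{eq:edp:PPS}, and concludes by the uniqueness of Theorem \ref{thm:existence:uniqueness:PPS} (and of Proposition \ref{prop:existence:uniqueness:PPS:linear} for part (ii)). The only cosmetic difference is that you spell out the ``bridge lemma'' via an explicit Dynkin-type change-of-variables for $\varphi(t,\overline{S}_{t})$, whereas the paper packages this step by invoking \cite[Section 4.1]{chevallier2015microscopic} together with Lemma \ref{lem:continuity:age:process} for the continuity $t\mapsto \mathcal{L}(\overline{S}_{t-})\in\mathcal{BC}(\mathbb{R}_{+},\mathcal{P}(\mathbb{R}_{+}))$.
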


\begin{proof}
-$(i)$ Suppose that $(\overline{N}_t)_{t\geq 0}$ is a solution of \eqref{eq:limit:equation:counting:process}. The thinning procedure implies that $(\overline{N}_t)_{t\geq 0}$ admits an intensity which only depends on the time $t$ and the age $\overline{S}_{t-}$. This allows us to denote by $f$ the bivariate function such that the intensity of $\overline{N}$ at time $t$ is given by $\overline{\lambda}_{t}=f(t,\overline{S}_{t-})$. It satisfies for all $t,s\geq 0$,
\begin{equation}\label{eq:intensity:limit:equation:time:age:expectation}
f(t,s) = \Psi\left( s,  \int_{0}^{t-} h(t-z) \mathbb{E}\left[ f(z,\overline{S}_{z-}) \right]dz + f_{0}(t) \right).
\end{equation}
In particular, the intensity is bounded since $\Psi$ is bounded. So, if we denote $w_{t}=w(t,\cdot)$ the distribution of the age $\overline{S}_{t-}$, Lemma \ref{lem:continuity:age:process} gives that $w$ belongs to $\mathcal{BC}(\mathbb{R}_{+},\mathcal{P}(\mathbb{R}_{+}))$ and \cite[Section 4.1]{chevallier2015microscopic} implies that $w$ satisfies the system
\begin{equation*}
\begin{cases}
\displaystyle \frac{\partial w\left(t,s\right)}{\partial t}+
\frac{\partial w\left(t,s\right)}{\partial s}+
f(t,s) w\left(t,s\right)=0,\\
\displaystyle w\left(t,0\right)=
\int_{0}^{+\infty} f(t,s) w\left(t,s\right)ds.
\end{cases}
\end{equation*}
Yet, by definition of $w$, $\mathbb{E}\left[ f(z,\overline{S}_{z-}) \right]=\int_{s=0}^{+\infty} f(z,s) w(z,ds)$, so \eqref{eq:intensity:limit:equation:time:age:expectation} rewrites as
\begin{equation}\label{eq:intensity:limit:equation:time:age:w}
f(t,s) = \Psi\left( s,  X(t)  + f_{0}(t) \right),
\end{equation}
where $X$ satisfies
\begin{equation*}
X(t)= \int_{0}^{t} h(t-z) \int_{0}^{+\infty} \Psi(s,X(z)+f_{0}(z))w(z,ds) dz.
\end{equation*}
Hence, $w$ is a solution in $\mathcal{BC}(\mathbb{R}_{+},\mathcal{P}(\mathbb{R}_{+}))$ of the system \eqref{eq:edp:PPS}. Yet, the solution of \eqref{eq:edp:PPS} is unique (Theorem \ref{thm:existence:uniqueness:PPS}) so we have $w=u$ (defined in Proposition \ref{prop:limit:equation:well:posed:bounded:intensity}) and in particular $\mathbb{E} \left[\overline{N}_{+}(dt) \right]=\mathbb{E}\left[ f(t,\overline{S}_{t-}) \right]dt=u(t,0)dt=\overline{\lambda}(t)dt$.

\medskip
-$(ii)$ The first equation of \eqref{eq:limit:equation:counting:process:rewritten:PPS} is a standard thinning equation with $\overline{\lambda}$ given by the first step so its solution $(\overline{N}_{t})_{t\geq 0}$ is a measurable function of $\Pi$ and $\overline{N}_{-}$ hence it is unique (once $\Pi$ and $\overline{N}_{-}$ are fixed).

To conclude  this step, it suffices to check that $(\overline{N}_{t})_{t\geq 0}$ satisfies the second equation of \eqref{eq:limit:equation:counting:process:rewritten:PPS}. Identifying $\overline{\lambda}(t)$ with $\mathbb{E}\left[ f(t,\overline{S}_{t-}) \right]$ in \eqref{eq:intensity:limit:equation:time:age:expectation}, the intensity of $\overline{N}$ is given by 
\begin{equation}\label{eq:intensity:limit:equation:time:age:lambda:barre}
f(t,\overline{S}_{t-}) = \Psi\left( \overline{S}_{t-},  \int_{0}^{t-} h(t-z) \overline{\lambda}(z)dz + f_{0}(t) \right)
\end{equation}
which is bounded.
Hence, \cite[Section 4.1]{chevallier2015microscopic} implies that the distribution of the age $\overline{S}_{t-}$ denoted by $v(t,\cdot)$ is the unique solution of
\begin{equation}\label{eq:edp:PPS:pre}
\begin{cases}
\displaystyle \frac{\partial v\left(t,s\right)}{\partial t}+
\frac{\partial v\left(t,s\right)}{\partial s}+
\Psi\left(s,  \int_{0}^{t} h(t-z) \overline{\lambda}(z) dz  + f_{0}(t) \right) v\left(t,s\right)=0,\\
\displaystyle v\left(t,0\right)=
\int_{0}^{\infty} \Psi\left(s,  \int_{0}^{t} h(t-z) \overline{\lambda}(z) dz  + f_{0}(t) \right) v\left(t,s\right)ds.
\end{cases}
\end{equation}
Since $\overline{\lambda}(t)=u(t,0)$ and $u$ is a solution of \eqref{eq:edp:PPS}, it is clear that $u$ satisfies this system, so $u(t,\cdot)$ is the density of $\overline{S}_{t-}$.
Finally, using Fubini's Theorem we have
\begin{eqnarray*}
\mathbb{E}\left[ \overline{N}_{t} \right] &=& \int_0^t \mathbb{E}\left[ \Psi\left( \overline{S}_{t'-},  \int_{0}^{t'-} h(t'-z) \overline{\lambda}(z) dz + f_{0}(t') \right)  \right] dt'\\
 & = & \int_0^t \int_{0}^{\infty}  \Psi\left( s,  \int_{0}^{t'} h(t'-z) \overline{\lambda}(z) dz + f_{0}(t') \right) u(t',s) ds dt'\\
 & = & \int_{0}^{t'} \overline{\lambda}(t') dt',
\end{eqnarray*}
since $u$ satisfies the second equation of \eqref{eq:edp:PPS:pre}.\\

Finally, the three remaining points are rather simple. Firstly, the continuity of $\overline{\lambda}$ comes from Theorem \ref{thm:existence:uniqueness:PPS}.
Secondly, using \eqref{eq:intensity:limit:equation:time:age:lambda:barre} and $(i)$ one has
$$\overline{\lambda}(t)=\mathbb{E}\left[ f(t,\overline{S}_{t-}) \right]=\mathbb{E}\left[ \Psi\left( \overline{S}_{t-},  \int_{0}^{t-} h(t-z) \overline{\lambda}(z)dz + f_{0}(t) \right)  \right].$$ 
Lastly, the solution of \eqref{eq:limit:equation:counting:process:rewritten:PPS} is clearly a solution of \eqref{eq:limit:equation:counting:process} and $(i)$ tells that a solution of \eqref{eq:limit:equation:counting:process} is necessarily a solution of \eqref{eq:limit:equation:counting:process:rewritten:PPS} which gives uniqueness.
\end{proof}

\begin{prop}\label{prop:limit:equation:well:posed:delta=0}
Under (\hyperref[ass:Psi:Lipschitz]{$\mathcal{A}^{\Psi}_{\rm Lip}$}) and (\hyperref[ass:Psi:=Psi0]{$\mathcal{A}_{\Psi=\Psi_{0}}$}) assume that $h:\mathbb{R}_{+}\to \mathbb{R}$ is locally integrable and that $f_{0}:\mathbb{R}_{+}\to \mathbb{R}$ is continuous. 

Then, there exists a unique function $\overline{\lambda}$ (which is furthermore continuous on $\mathbb{R}_{+}$) depending only on $\Psi_{0}$, $h$ and $f_{0}$ such that the following statements hold
\begin{enumerate}[(i)]
\item if $(\overline{N}_t)_{t\geq 0}$ is a solution of \eqref{eq:limit:equation:counting:process} then $\mathbb{E} \left[\overline{N}_{+}(dt) \right]=\overline{\lambda}(t)dt$,
\item there exists a unique (once $\Pi$ is fixed) solution $(\overline{N}_t)_{t\geq 0}$ to the following system
\begin{equation}\label{eq:limit:equation:counting:process:rewritten:delta=0}
\left\{
\begin{aligned}
&\overline{N}_t= \int_0^t \int_0^\infty \mathds{1}_{\Big\{ x\leq \Psi_{0}\left( \int_{0}^{t'} h(t'-z) \overline{\lambda}(z)dz + f_{0}(t') \right)  \Big\}}  \, \Pi(dt',dx), \\
&\mathbb{E}\left[ \overline{N}_{t} \right] = \int_{0}^{t} \overline{\lambda}(t') dt'.
\end{aligned}
\right.
\end{equation}
\end{enumerate}
In particular, $\overline{\lambda}(t)=\Psi_{0}\left(  \int_{0}^{t-} h(t-z) \overline{\lambda}(z)dz + f_{0}(t) \right) $ and the solution of \eqref{eq:limit:equation:counting:process:rewritten:delta=0} is the unique (once $\Pi$ is fixed) solution of \eqref{eq:limit:equation:counting:process}.
\end{prop}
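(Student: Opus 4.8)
The crucial simplification under (\hyperref[ass:Psi:=Psi0]{$\mathcal{A}_{\Psi=\Psi_{0}}$}) is that the argument of the intensity in \eqref{eq:limit:equation:counting:process} no longer involves the random age process, so that intensity is a \emph{deterministic} function of time. The plan is therefore to reduce the whole statement to a deterministic Volterra fixed point equation for the mean intensity $\overline{\lambda}$ — this is the announced generalization of \cite[Lemma 24]{delattre2016} — and then to read off $(i)$, $(ii)$ and the uniqueness for \eqref{eq:limit:equation:counting:process}.

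First I would construct $\overline{\lambda}$. For $\mu$ locally bounded and non-negative, set $\Gamma(\mu)(t):=\Psi_{0}\big(\int_{0}^{t}h(t-z)\mu(z)\,dz+f_{0}(t)\big)$; since $h$ is locally integrable, $f_{0}$ is continuous and $\Psi_{0}\geq 0$ is continuous, $\Gamma$ maps such functions to continuous non-negative ones. By (\hyperref[ass:Psi:Lipschitz]{$\mathcal{A}^{\Psi}_{\rm Lip}$}), on $\mathcal{C}([0,T])$ one has $\| \Gamma(\mu_{1})-\Gamma(\mu_{2}) \|_{L^{\infty}([0,T])}\leq {\rm Lip}(\Psi_{0})\big(\int_{0}^{T}|h(z)|\,dz\big)\| \mu_{1}-\mu_{2} \|_{L^{\infty}([0,T])}$, so for $T$ small enough (possible by local integrability of $h$, and $T$ does not depend on $f_{0}$) $\Gamma$ is a contraction there; iterating the resulting fixed point on successive intervals $[kT,(k+1)T]$ — at each step the already-computed past contribution $\int_{0}^{kT}h(\cdot-z)\overline{\lambda}(z)\,dz$ merely shifts the continuous term $f_{0}$ — produces a unique continuous $\overline{\lambda}:\mathbb{R}_{+}\to\mathbb{R}_{+}$ with $\overline{\lambda}=\Gamma(\overline{\lambda})$, unique even among locally bounded functions, and depending only on $\Psi_{0}$, $h$ and $f_{0}$.

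Next I would prove $(i)$. Let $(\overline{N}_{t})_{t\geq 0}$ solve \eqref{eq:limit:equation:counting:process}. Because $\Psi$ does not depend on the age, the thinning representation shows that the $\mathbb{F}$-intensity of $\overline{N}$ is the deterministic function $g(t):=\Psi_{0}\big(\int_{0}^{t-}h(t-z)\,\mathbb{E}[\overline{N}_{+}(dz)]+f_{0}(t)\big)$, hence $\mathbb{E}[\overline{N}_{+}(dt)]=g(t)\,dt$ and $g$ is locally integrable. Plugging this back and using $\Psi_{0}(x)\leq \Psi_{0}(0)+{\rm Lip}(\Psi_{0})|x|$ gives $g(t)\leq \Psi_{0}(0)+{\rm Lip}(\Psi_{0})\big(|f_{0}(t)|+\int_{0}^{t}|h(t-z)|g(z)\,dz\big)$, so $g$ is locally bounded by the generalized Grönwall Lemma \ref{lem:Gronwall:Picard}; and then $g=\Gamma(g)$ (changing $t-$ into $t$ in the integral is harmless), so $g=\overline{\lambda}$ by uniqueness, i.e. $\mathbb{E}[\overline{N}_{+}(dt)]=\overline{\lambda}(t)\,dt$.

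Finally, for $(ii)$: given $\overline{\lambda}$, the map $t\mapsto\Psi_{0}\big(\int_{0}^{t}h(t-z)\overline{\lambda}(z)\,dz+f_{0}(t)\big)$ is deterministic and locally bounded, so the first line of \eqref{eq:limit:equation:counting:process:rewritten:delta=0} is a classical thinning equation: its solution is a measurable functional of $\Pi$ — hence exists and is unique once $\Pi$ is fixed, and is actually an inhomogeneous Poisson process — and its mean is $\int_{0}^{t}\Psi_{0}(\int_{0}^{t'}h(t'-z)\overline{\lambda}(z)\,dz+f_{0}(t'))\,dt'=\int_{0}^{t}\overline{\lambda}(t')\,dt'$ by $\overline{\lambda}=\Gamma(\overline{\lambda})$, which is the second line; this proves $(ii)$, and the displayed identity for $\overline{\lambda}$ is just $\overline{\lambda}=\Gamma(\overline{\lambda})$. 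For the remaining equivalence, substituting $\mathbb{E}[\overline{N}_{+}(dz)]=\overline{\lambda}(z)\,dz$ shows a solution of \eqref{eq:limit:equation:counting:process:rewritten:delta=0} solves \eqref{eq:limit:equation:counting:process}, while $(i)$ forces any solution of \eqref{eq:limit:equation:counting:process} to solve \eqref{eq:limit:equation:counting:process:rewritten:delta=0}, which has a unique solution; hence \eqref{eq:limit:equation:counting:process} has a unique solution once $\Pi$ is fixed. I expect the only genuinely delicate point to be the a priori local boundedness of $g$ in step $(i)$: since, contrary to Proposition \ref{prop:limit:equation:well:posed:bounded:intensity}, $\Psi$ is not assumed bounded, this must be obtained from the Lipschitz growth of $\Psi_{0}$ (via Grönwall, or via a stopping-time / linear-Hawkes domination argument as in Proposition \ref{prop:Thinning:well:posed}) instead of being automatic.
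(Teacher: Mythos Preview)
Your proposal is correct and follows essentially the same route as the paper: reduce to a deterministic fixed-point equation for the mean intensity (the generalization of \cite[Lemma~24]{delattre2016}, stated in the paper as Lemma~\ref{lem:solution:expectation:uniqueness}), then read off $(i)$, $(ii)$ and the equivalence with \eqref{eq:limit:equation:counting:process} by standard thinning arguments. The one difference is that the paper runs the fixed point on the cumulative $\overline{\Lambda}(t)=\mathbb{E}[\overline{N}_t]$ --- which is automatically non-decreasing and locally bounded as soon as a solution of \eqref{eq:limit:equation:counting:process} is given --- and recovers $\overline{\lambda}$ as its derivative via the $\mathcal{C}^1$ regularity in Lemma~\ref{lem:solution:expectation:uniqueness}, whereas you set up the contraction $\Gamma$ directly on the intensity. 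This buys the paper the a priori local boundedness for free and avoids the small circularity in your step $(i)$: Lemma~\ref{lem:Gronwall:Picard} already \emph{assumes} $u$ locally bounded, which is precisely what you invoke it to prove for $g$. Working at the level of $\overline{\Lambda}$, or integrating your inequality once before applying Gr\"onwall, resolves this immediately.
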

The proof follows \cite[Theorem 8-{\it (i)}]{delattre2016} and is given in  \ref{sec:proof:prop:limit:equation} for sake of exhaustiveness.

\subsection{Link via the age process}

The link between the limit equation \eqref{eq:limit:equation:counting:process} and system \eqref{eq:edp:PPS} is even deeper than what is stated in Proposition \ref{prop:limit:equation:well:posed:bounded:intensity}. Indeed, the distribution of the age process (either the predictable or the standard one) associated with a solution of the limit equation is a solution of \eqref{eq:edp:PPS} as described in the next statement.

\begin{prop}\label{prop:age:process:limit:equation:PPS}
Under the assumptions of Proposition \ref{prop:limit:equation:well:posed:bounded:intensity}, the unique solution $u$ to the system \eqref{eq:edp:PPS} with initial condition that  $u(0,\cdot)=u^{\rm in}$ is such that $u(t,\cdot)$ is the density of the age $\overline{S}_{t-}$ (or $\overline{S}_{t}$ since they are equal a.s.) associated with the solution of the limit equation \eqref{eq:limit:equation:counting:process} given in Proposition \ref{prop:limit:equation:well:posed:bounded:intensity}. 
\end{prop}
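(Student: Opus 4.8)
The plan is to extract and lightly repackage the argument already carried out inside the proof of Proposition~\ref{prop:limit:equation:well:posed:bounded:intensity}$(ii)$. Let $(\overline{N}_t)_{t\geq 0}$ be the solution of the limit equation \eqref{eq:limit:equation:counting:process} furnished by Proposition~\ref{prop:limit:equation:well:posed:bounded:intensity}, and recall that by part~$(i)$ of that proposition its mean point measure is $\mathbb{E}\left[\overline{N}_{+}(dt)\right]=\overline{\lambda}(t)\,dt$ with $\overline{\lambda}(t)=u(t,0)$. First I would note that the thinning representation then forces the $\mathbb{F}$-intensity of $\overline{N}$ to be
\[
\overline{\lambda}_t = \Psi\!\left(\overline{S}_{t-},\ \int_{0}^{t-} h(t-z)\,\overline{\lambda}(z)\,dz + f_{0}(t)\right),
\]
which is a \emph{bounded} function of the pair $(t,\overline{S}_{t-})$ by (\hyperref[ass:Psi:uniformly:bounded]{$\mathcal{A}^{\Psi}_{\infty}$}), and which is continuous in $t$ uniformly in the age variable since $\overline{\lambda}$ and $f_{0}$ are continuous, $h$ is locally integrable, and $\Psi$ is Lipschitz in its second argument (\hyperref[ass:Psi:Lipschitz]{$\mathcal{A}^{\Psi}_{\rm Lip}$}).

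Next I would invoke the age-structured description of a point process from \cite[Section~4.1]{chevallier2015microscopic}, exactly as in the proof of Proposition~\ref{prop:limit:equation:well:posed:bounded:intensity}: since $\overline{N}$ has a bounded intensity that depends only on $t$ and $\overline{S}_{t-}$, the curve of laws $v(t,\cdot):=\mathrm{Law}(\overline{S}_{t-})$ belongs to $\mathcal{BC}(\mathbb{R}_{+},\mathcal{P}(\mathbb{R}_{+}))$ — using Lemma~\ref{lem:continuity:age:process} for the continuity — and is a weak solution of the linear system \eqref{eq:edp:PPS:pre}, that is, of \eqref{eq:edp:PPS:linear} with firing rate $f(t,s)=\Psi\!\big(s,\ \int_{0}^{t} h(t-z)\overline{\lambda}(z)\,dz+f_{0}(t)\big)$ and initial datum $\mathrm{Law}(\overline{S}_{0-})=\mathrm{Law}(-T_{0})$, which by (\hyperref[ass:initial:condition:density]{$\mathcal{A}^{\zeta_{N_{-}}}_{u^{\rm in}}$}) has density $u^{\rm in}$.

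Then I would observe that the solution $u$ of the nonlinear system \eqref{eq:edp:PPS} solves this \emph{same} linear system: indeed $X(t)=\int_{0}^{t} h(t-z)u(z,0)\,dz=\int_{0}^{t} h(t-z)\overline{\lambda}(z)\,dz$ because $\overline{\lambda}(t)=u(t,0)$, so substituting into \eqref{eq:edp:PPS} yields precisely \eqref{eq:edp:PPS:pre}, and $u$ carries the initial datum $u^{\rm in}$ by hypothesis. Since the ingredients of this linear problem are exactly those of Proposition~\ref{prop:existence:uniqueness:PPS:linear}, its weak solution in $\mathcal{BC}(\mathbb{R}_{+},\mathcal{M}(\mathbb{R}_{+}))$ is unique; hence $v\equiv u$, i.e. $u(t,\cdot)$ is the density of $\overline{S}_{t-}$ for every $t\geq 0$, and also of $\overline{S}_{t}$ since $\overline{S}_{t-}=\overline{S}_{t}$ almost surely. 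The only point requiring care — and it is minor, being already dispatched in the earlier proof — is verifying that the frozen firing rate $f$ meets the continuity/boundedness hypotheses of Proposition~\ref{prop:existence:uniqueness:PPS:linear} and that the age law genuinely lies in the bounded-continuous measure space so that the uniqueness statement applies; no genuinely new obstacle arises.
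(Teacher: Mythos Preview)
Your proposal is correct and follows essentially the same route as the paper: the paper's proof simply points back to the argument below Equation~\eqref{eq:edp:PPS:pre} in the proof of Proposition~\ref{prop:limit:equation:well:posed:bounded:intensity}, which is precisely the reasoning you have spelled out. The only cosmetic difference is that the paper invokes \cite[Section~4.1]{chevallier2015microscopic} directly for both existence and uniqueness of the age-law solution, whereas you appeal to that reference for the identification of $v$ as a solution and then to Proposition~\ref{prop:existence:uniqueness:PPS:linear} for uniqueness; both arguments are valid.
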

This result is in fact given in the proof of Proposition \ref{prop:limit:equation:well:posed:bounded:intensity} below Equation \eqref{eq:edp:PPS:pre}.

\section{Mean-field dynamics}
\label{sec:mean:field}
The convergence to the limit dynamics is proved by using a path-wise coupling (like in \cite{delattre2016,Meleard_96}) between the processes given by Representation \ref{def:Hawkes:Thinning} on the one hand and by the limit equation on the other hand. Then, this coupling is studied in two different cases: when the intensity is bounded, i.e. $\Psi$ satisfies (\hyperref[ass:Psi:uniformly:bounded]{$\mathcal{A}^{\Psi}_{\infty}$}), or when the intensity does not depend on the age process, i.e. $\Psi$ satisfies (\hyperref[ass:Psi:=Psi0]{$\mathcal{A}_{\Psi=\Psi_{0}}$}).
 
The precise statement regarding the convergence of the $n$-particle system towards point processes of the McKean-Vlasov type whose intensity depends on time and on the age is given in Corollary \ref{cor:mean:field:approx:counting:and:age:process}.
 
\subsection{Coupling}
Once the limit equation is well-posed, following the ideas of Sznitman in \cite{Sznitman_91}, it is easy to construct a suitable coupling between ADRHPs and i.i.d. solutions of the limit equation \eqref{eq:limit:equation:counting:process}. More precisely, consider
$\bullet$ a sequence $(N_{-}^{i})_{i\geq 1}$ of i.i.d. point processes distributed according to $\zeta_{N_{-}}$;

$\bullet$ an infinite matrix $(H_{ij})_{i,j\geq 1}$ (independent of $(N_{-}^{i})_{i\geq 1}$) with entries distributed according to $\mu_{H}$ such that 
\begin{equation}\label{eq:indep:Hij:infinite}
{\small
\begin{cases}
\text{for any fixed $i\geq 1$, the variables $H_{i1},\dots ,H_{in},\dots$ are independent,}\\
\text{the sequences $H_{i1},\dots ,H_{in},\dots$ are exchangeable (with respect to $i$),}\\
\text{the matrix $(H_{ij})_{i,j\geq 1}$ is independent from $(N_{-}^{i})_{i=1,..,n}$;}
\end{cases}
}
\end{equation}

$\bullet$ an infinite matrix $(F_{ij})_{i,j\geq 1}$ with entries distributed according to $\nu_{F}$ such that 
\begin{equation}\label{eq:indep:Fij:infinite}
{\small
\begin{cases}
\text{for any fixed $i\geq 1$, the variables $F_{i1},\dots ,F_{in},\dots$ are independent,}\\
\text{the sequences $F_{i1},\dots ,F_{in},\dots$ are exchangeable (with respect to $i$),}
\end{cases}
}
\end{equation}

$\bullet$ a sequence $(\Pi^i(dt',dx))_{i\geq 1}$ of i.i.d. $\mathbb{F}$-Poisson measures with intensity $1$ on $\mathbb{R}_{+}^{2}$.
\medskip

\noindent Notice that \eqref{eq:indep:Hij:infinite} (resp. \eqref{eq:indep:Fij:infinite}) is the equivalent of \eqref{eq:indep:Hij} (resp. \eqref{eq:indep:Fij}) for infinite matrices.
Under (\hyperref[ass:mu:H:locally:square:integrable]{$\mathcal{A}^{\mu_{H}}_{\infty,2}$}), (\hyperref[ass:nu:F:variance]{$\mathcal{A}^{\nu_{F}}_{2}$}) and (\hyperref[ass:Psi:Lipschitz]{$\mathcal{A}^{\Psi}_{\rm Lip}$}), the assumptions of Proposition \ref{prop:Thinning:well:posed} are satisfied. Furthermore, if we assume either:
\begin{itemize}
\nsp\nsp
\item $\mathcal{H}_{1}$: ``(\hyperref[ass:Psi:uniformly:bounded]{$\mathcal{A}^{\Psi}_{\infty}$}) and (\hyperref[ass:initial:condition:density]{$\mathcal{A}^{\zeta_{N_{-}}}_{u^{\rm in}}$}) are satisfied", \phantomsection \label{ass:H1}
\item $\mathcal{H}_{2}$: ``(\hyperref[ass:Psi:=Psi0]{$\mathcal{A}_{\Psi=\Psi_{0}}$}) is satisfied", \phantomsection  \label{ass:H2}
\end{itemize}
then $m_{\mu_{H}}$ and $m_{\nu_{F}}$ (defined in (\hyperref[ass:mu:H:infty]{$\mathcal{A}^{\mu_{H}}_{\infty}$}) and (\hyperref[ass:nu:F:locally:bounded]{$\mathcal{A}^{\nu_{F}}_{1}$})) satisfy the assumptions of either Proposition \ref{prop:limit:equation:well:posed:bounded:intensity} (under \hyperref[ass:H1]{$\mathcal{H}_{1}$}) or \ref{prop:limit:equation:well:posed:delta=0} (under \hyperref[ass:H2]{$\mathcal{H}_{2}$}) so one can build simultaneously:
\smallskip

 - a sequence\footnote{The sequence is indexed by $n\geq 1$.} $(N^{n,i})_{i=1,\dots ,n}$ of ADRHPs with parameters $(n,\mu_{H},\nu_{F},\Psi,\zeta_{N_{-}})$ according to Representation~\ref{def:Hawkes:Thinning} that is
\begin{equation}\label{eq:definition:coupling:ADRHP}
N^{n,i}_t= \int_0^t \int_0^\infty \mathds{1}_{  \left\{x \leq \Psi \left(S^{n,i}_{t'-},\frac{1}{n}  \sum_{j=1}^n \left[\int_{0}^{t'-} H_{ij}(t'-z)N^{n,j}_{+}(dz) + F_{ij}(t') \right] \right) \right\}} \Pi^i(dt',dx)
\end{equation}
with predictable age processes $(S^{n,i}_{t-})_{t\geq 0}^{i=1,\dots,n}$ and past given by $N^{i}_{-}$,

 - and a sequence $(\overline{N}^i_t)^{i\geq 1}_{t\geq 0}$ of i.i.d. solutions of the limit equation with parameters $(m_{\mu_{H}},m_{\nu_{F}},\Psi,\zeta_{N_{-}})$ that is
\begin{equation}\label{eq:definition:coupling:limit:equation}
\overline{N}^{i}_t= \int_0^t \int_0^\infty \mathds{1}_{ \left\{x \leq \Psi \left( \overline{S}^{i}_{t'-},\int_0^{t'} m_{\mu_{H}}(t'-z)\overline{\lambda}(z) dz +m_{\nu_{F}}(t') \right)\right\}} \Pi^i(dt',dx),
\end{equation}
where $\overline{\lambda}$ is defined either in Proposition \ref{prop:limit:equation:well:posed:bounded:intensity} (under \hyperref[ass:H1]{$\mathcal{H}_{1}$}) or \ref{prop:limit:equation:well:posed:delta=0} (under \hyperref[ass:H2]{$\mathcal{H}_{2}$}) and $(\overline{S}^{i}_{t-})_{t\geq 0}$ is the predictable age process associated with $\overline{N}^{i}=N^{i}_{-}\cup \overline{N}^{i}_{+}$.
\smallskip

Notice that this coupling is based on the sharing of a common past $(N_{-}^{i})_{i\geq 1}$ and a common underlying randomness, that are the $\mathbb{F}$-Poisson measures $(\Pi^i(dt',dx))_{i\geq 1}$.
Note that the sequence of ADRHPs is indexed by the size of the network $n$ whereas the solutions of the limit equation that represent the behaviour under the mean-field approximation are not.

The following result states the control of the mean-field approximation.

\begin{thm}\label{thm:counting:process:convergence}
Under (\hyperref[ass:mu:H:locally:square:integrable]{$\mathcal{A}^{\mu_{H}}_{\infty,2}$}), (\hyperref[ass:nu:F:variance]{$\mathcal{A}^{\nu_{F}}_{2}$}) and (\hyperref[ass:Psi:Lipschitz]{$\mathcal{A}^{\Psi}_{\rm Lip}$}), assume either \hyperref[ass:H1]{$\mathcal{H}_{1}$} or  \hyperref[ass:H2]{$\mathcal{H}_{2}$}.

Then, the sequence of ADRHP $(N^{n,i})_{i=1,..,n}$ (with $\mathbb{F}$-intensities on $\mathbb{R}_{+}$ denoted by $(\lambda^{n,i})_{i=1,..,n}$) defined by \eqref{eq:definition:coupling:ADRHP} and the i.i.d. copies $(\overline{N}^i_t)^{i\geq 1}_{t\geq 0}$ of the solution of the limit equation (with $\mathbb{F}$-intensities on $\mathbb{R}_{+}$ denoted by $(\overline{\lambda}^{i})_{i=1,..,n}$) defined by \eqref{eq:definition:coupling:limit:equation} are such that for all $i=1,\dots ,n$ and $\theta>0$,
\begin{equation}\label{eq:bound:full:expect:counting:process}
\mathbb{E}\left[ \sup_{t\in [0,\theta]} | N^{n,i}_t -\overline{N}^i_t | \right] \leq  \int_{0}^{\theta} \mathbb{E}\left[| \lambda^{n,i}_t - \overline{\lambda}^{i}_{t} | \right]dt  \leq  
C(\theta,\Psi,\mu_{H},\nu_{F}) \, n^{-1/2},
\end{equation}
where the constant $C(\theta,\Psi,\mu_{H},\nu_{F})$ does not depend on $n$.
\end{thm}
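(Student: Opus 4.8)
The plan is to exploit the path-wise coupling \eqref{eq:definition:coupling:ADRHP}--\eqref{eq:definition:coupling:limit:equation}, in which $N^{n,i}$ and $\overline{N}^i$ are driven by the same Poisson measure $\Pi^i$ and share the same past $N^i_-$. I would first introduce the disagreement measure $\Delta^i(dz):=\int_0^\infty|\mathds{1}_{\{x\leq\lambda^{n,i}_{z-}\}}-\mathds{1}_{\{x\leq\overline{\lambda}^i_{z-}\}}|\,\Pi^i(dz,dx)$ and note the pathwise bound $|N^{n,i}_t-\overline{N}^i_t|\leq\Delta^i([0,t])$, whose right-hand side is non-decreasing in $t$; hence $\sup_{t\in[0,\theta]}|N^{n,i}_t-\overline{N}^i_t|\leq\Delta^i([0,\theta])$, and compensating $\Pi^i$ and integrating the indicator in $x$ gives $\mathbb{E}[\Delta^i([0,\theta])]=\int_0^\theta\mathbb{E}[|\lambda^{n,i}_t-\overline{\lambda}^i_t|]\,dt$. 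This already yields the first inequality in \eqref{eq:bound:full:expect:counting:process}; by exchangeability $\phi^n(t):=\mathbb{E}[|\lambda^{n,1}_t-\overline{\lambda}^1_t|]$ is independent of $i$, so everything reduces to proving $\int_0^\theta\phi^n(t)\,dt\leq C(\theta,\Psi,\mu_H,\nu_F)\,n^{-1/2}$.

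Next I would estimate $\phi^n(t)$ by writing $\lambda^{n,i}_t=\Psi(S^{n,i}_{t-},A^{n,i}_t)$ and $\overline{\lambda}^i_t=\Psi(\overline{S}^i_{t-},\overline{A}^i_t)$, with $A^{n,i}_t=\tfrac1n\sum_j(\int_0^{t-}H_{ij}(t-z)N^{n,j}_+(dz)+F_{ij}(t))$ and $\overline{A}^i_t=\int_0^t m_{\mu_H}(t-z)\overline{\lambda}(z)\,dz+m_{\nu_F}(t)$, and using $(\mathcal{A}^{\Psi}_{\rm Lip})$ to split
\[
|\lambda^{n,i}_t-\overline{\lambda}^i_t|\leq{\rm Lip}(\Psi)\,|A^{n,i}_t-\overline{A}^i_t|+|\Psi(S^{n,i}_{t-},\overline{A}^i_t)-\Psi(\overline{S}^i_{t-},\overline{A}^i_t)|.
\]
Under $\mathcal{H}_2$ the last term vanishes; under $\mathcal{H}_1$ it is at most $2\|\Psi\|_\infty\mathds{1}_{\{S^{n,i}_{t-}\neq\overline{S}^i_{t-}\}}$, and since the two processes share $N^i_-$, an age discrepancy at $t-$ forces a point of $N^{n,i}\triangle\overline{N}^i$ inside $(0,t)$, hence an atom counted by $\Delta^i([0,t))$, so $\mathbb{E}[\mathds{1}_{\{S^{n,i}_{t-}\neq\overline{S}^i_{t-}\}}]\leq\int_0^t\phi^n(z)\,dz$. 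For $\mathbb{E}[|A^{n,i}_t-\overline{A}^i_t|]$ I would interpose $B^{n,i}_t:=\tfrac1n\sum_j(\int_0^{t-}H_{ij}(t-z)\overline{N}^j_+(dz)+F_{ij}(t))$: using $|H_{ij}|\leq M_{\mu_H}$ pathwise one gets $\mathbb{E}[|A^{n,i}_t-B^{n,i}_t|]\leq\tfrac1n\sum_j\mathbb{E}[\int_0^{t-}M_{\mu_H}(t-z)\Delta^j(dz)]=\int_0^t M_{\mu_H}(t-z)\phi^n(z)\,dz$; and $B^{n,i}_t-\overline{A}^i_t=\tfrac1n\sum_j\zeta^{(t)}_j$ where, for fixed $i$, the variables $\zeta^{(t)}_j:=\int_0^{t-}H_{ij}(t-z)\overline{N}^j_+(dz)+F_{ij}(t)-\int_0^t m_{\mu_H}(t-z)\overline{\lambda}(z)\,dz-m_{\nu_F}(t)$ are i.i.d.\ in $j$ (by the independence structure of the coupling) and centered (using $H_{ij}\perp\overline{N}^j$, $\mathbb{E}[H_{ij}(\cdot)]=m_{\mu_H}$, $\mathbb{E}[F_{ij}(t)]=m_{\nu_F}(t)$ and $\mathbb{E}[\overline{\lambda}^j_z]=\overline{\lambda}(z)$ from Proposition \ref{prop:limit:equation:well:posed:bounded:intensity} or \ref{prop:limit:equation:well:posed:delta=0}), so $\mathbb{E}[|B^{n,i}_t-\overline{A}^i_t|]\leq n^{-1/2}\,{\rm Var}(\zeta^{(t)}_1)^{1/2}=:n^{-1/2}\kappa(t)$.

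The step I expect to be the main obstacle is showing $\kappa\in L^1([0,\theta])$ (and, under $\mathcal{H}_1$, making precise that the age discrepancy is indeed dominated by the disagreement count). For $\kappa$, I would bound ${\rm Var}(\zeta^{(t)}_1)\leq C\big(\mathbb{E}[(\int_0^{t-}M_{\mu_H}(t-z)\overline{N}^j_+(dz))^2]+V_{\nu_F}(t)+m_{\nu_F}(t)^2\big)$ and control the first term by a computation on the second factorial moment measure of $\overline{N}^j$, whose intensity is bounded on $[0,\theta]$ by some $\Lambda_\theta$ (by $(\mathcal{A}^{\Psi}_\infty)$ under $\mathcal{H}_1$, by continuity of $\overline{\lambda}$ under $\mathcal{H}_2$), giving $\mathbb{E}[(\int_0^{t-}M_{\mu_H}(t-z)\overline{N}^j_+(dz))^2]\leq\Lambda_\theta\|M_{\mu_H}\|_{L^2([0,t])}^2+\Lambda_\theta^2\|M_{\mu_H}\|_{L^1([0,t])}^2$; this is exactly where $(\mathcal{A}^{\mu_H}_{\infty,2})$ is needed, while $(\mathcal{A}^{\nu_F}_2)$ controls the $F$-contribution and ensures $\int_0^\theta\kappa(t)\,dt<\infty$. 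Collecting the estimates, $\phi^n$ obeys a renewal inequality $\phi^n(t)\leq g^n(t)+\int_0^t K(t-z)\phi^n(z)\,dz$ with $g^n(t)={\rm Lip}(\Psi)\,n^{-1/2}\kappa(t)$ and locally integrable kernel $K={\rm Lip}(\Psi)M_{\mu_H}+2\|\Psi\|_\infty$ under $\mathcal{H}_1$ (resp.\ $K={\rm Lip}(\Psi)M_{\mu_H}$ under $\mathcal{H}_2$), neither depending on $n$. The generalized Gr\"onwall lemma (Lemma \ref{lem:Gronwall:Picard}) then gives $\phi^n(t)\leq g^n(t)+\int_0^t\varrho(t-z)g^n(z)\,dz$ for the locally integrable resolvent $\varrho$ of $K$, whence $\int_0^\theta\phi^n(t)\,dt\leq(1+\|\varrho\|_{L^1([0,\theta])})\,{\rm Lip}(\Psi)\,n^{-1/2}\int_0^\theta\kappa(t)\,dt=C(\theta,\Psi,\mu_H,\nu_F)\,n^{-1/2}$, with $C$ manifestly independent of $n$. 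Together with the reduction in the first paragraph, this establishes both inequalities in \eqref{eq:bound:full:expect:counting:process}.
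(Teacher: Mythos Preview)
Your approach is essentially the paper's coupling argument, and the first paragraph (the reduction to $\int_0^\theta\mathbb{E}[|\lambda^{n,1}_t-\overline\lambda^1_t|]\,dt$ via the disagreement measure and exchangeability) is exactly what the paper does. The splitting into an ``age mismatch'' term and a Lipschitz term, and the bound $\mathbb{P}(S^{n,i}_{t-}\neq\overline S^i_{t-})\leq\int_0^t\phi^n(z)\,dz$ via the shared past, also match the paper's proof. Two differences are worth noting.

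\medskip
\textbf{A genuine gap: the i.i.d.\ claim for $\zeta^{(t)}_j$.} You assert that the $\zeta^{(t)}_j=\int_0^{t-}H_{ij}(t-z)\overline N^j_+(dz)+F_{ij}(t)-(\cdots)$ are i.i.d.\ in $j$. The hypotheses \eqref{eq:indep:Hij} and \eqref{eq:indep:Fij} give that the rows $(H_{ij})_j$ and $(F_{ij})_j$ are separately independent, and that $\mathbf H\perp(N^j_-)_j$, but \emph{no joint independence between the $H$-row and the $F$-row} is assumed; in particular $F_{ik}$ could in principle depend on $H_{ij}$ for $j\neq k$, so neither independence nor even uncorrelatedness of the $\zeta^{(t)}_j$ follows directly. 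The paper avoids this by decomposing $B^{n,i}_t-\overline A^i_t$ into three pieces handled separately: a compensated martingale term $\tfrac1n\sum_j\int H_{ij}(t-z)[\overline N^j_+(dz)-\overline\lambda^j_z\,dz]$ (controlled by the bracket, giving the $\|M_{\mu_H}\|_{L^2}$ contribution), a term $\tfrac1n\sum_j\int [H_{ij}(t-z)\overline\lambda^j_z-m_{\mu_H}(t-z)\overline\lambda(z)]\,dz$ (here the summands are genuinely i.i.d.\ because $H_{ij}\perp\overline\lambda^j$), and the $F$-term $\tfrac1n\sum_j[F_{ij}(t)-m_{\nu_F}(t)]$ (i.i.d.\ by \eqref{eq:indep:Fij}). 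Your single-block treatment can be repaired in exactly this way.

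\medskip
\textbf{A technical difference: Gr\"onwall for $\phi^n$ versus $\delta_n$.} You derive a renewal inequality for $\phi^n(t)$ and invoke a resolvent form of Gr\"onwall to conclude. The paper instead integrates first and obtains a renewal inequality for $\delta_n(\theta)=\int_0^\theta\phi^n$; this is then fed into Lemma~\ref{lem:Gronwall:Picard}, which requires the forcing term to be locally \emph{bounded}. Your forcing $g^n(t)={\rm Lip}(\Psi)\,n^{-1/2}\kappa(t)$ contains $V_{\nu_F}(t)^{1/2}$, which under $(\mathcal{A}^{\nu_F}_2)$ is only locally integrable, not locally bounded; so Lemma~\ref{lem:Gronwall:Picard} as stated does not apply to $\phi^n$ directly, and you genuinely need the resolvent version (or else integrate first as the paper does, after which the forcing $\int_0^\theta\kappa$ \emph{is} locally bounded). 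Either route works, but be explicit about which Gr\"onwall statement you are using.
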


\begin{proof}
First, let us note that in each cases, the coupling is well-defined thanks to either Proposition \ref{prop:limit:equation:well:posed:bounded:intensity} or \ref{prop:limit:equation:well:posed:delta=0}.
Let us fix some $n\geq 1$. Let us denote $\Delta^{i}_{n}(t)= \int_{0}^{t} |N^{n,i}_{+}(dt')-\overline{N}^i_{+}(dt')|$ and $\delta^{i}_n(t)=\mathbb{E}[\Delta^{i}_{n}(t)]$ its expectation. Denoting $A \triangle B$ the symmetric difference of the sets $A$ and $B$ and $\card(A)$ the cardinal of the set $A$ we have
\begin{equation}\label{eq:def:Delta:i}
\Delta^{i}_{n}(t)= \int_{0}^{t} |N^{n,i}_{+}(dt')-\overline{N}_{+}^i(dt')|=\card \left( ( N^{n,i}\triangle\, \overline{N}^{i} )  \cap [0,t]\right) ,
\end{equation}
that is the number of points that are not common to $N^{n,i}$ and $\overline{N}^i$ between 0 and $t$.
Then, it is clear that, for all $i=1,\dots ,n$ and $\theta>0$, $\sup_{t\in [0,\theta]} |N^{i}_t - \overline{N}^i_t| \leq \Delta^{i}_n(\theta)$  and so
\begin{equation}\label{eq:bound:L1:coupling}
\mathbb{E}\left[ \sup_{t\in [0,\theta]} |N^{n,i}_t - \overline{N}^i_t| \right] \leq \mathbb{E}\Big[ \Delta_{n}^{i}(\theta) \Big] = \delta^{i}_n(\theta).
\end{equation}
On the one hand, the $\overline{N}^{i}$'s are i.i.d. hence exchangeable. On the other hand, thanks to the form of the intensity and the assumptions on the matrices $(H_{ij})_{i,j\geq 1}$ and $(F_{ij})_{i,j\geq 1}$ - \eqref{eq:indep:Hij:infinite} and \eqref{eq:indep:Fij:infinite} - the family $(N^{n,i})_{i=1,\dots,n}$ is exchangeable too. 
Hence $\delta^{i}_n$ does not depend on $i$ and is simply denoted by $\delta_{n}$ in the sequel. Let us focus on the case $i=1$.
First, let us remind that $\overline{\lambda}^{1}_{t}$ is the intensity of $\overline{N}^{1}$, so
$$
\Delta^{1}_{n}(\theta)= \int_0^\theta \int_0^\infty \Big| \mathds{1}_{\big\{x \leq \lambda^{n,1}_{t} \big\}} - \mathds{1}_{\big\{x \leq \overline{\lambda}^{1}_{t} \big\}} \Big| \, \Pi^1(dt,dx).
$$
Taking expectation we find
\begin{eqnarray}
\delta_n(\theta)&=& \mathbb{E}\left[ \int_0^\theta \mathbb{E}\left[ \int_0^\infty \Big| \mathds{1}_{\big\{x \leq \lambda^{n,1}_{t} \big\}} - \mathds{1}_{\big\{x \leq \overline{\lambda}^{1}_{t} \big\}} \Big| \, \Pi^1(dt,dx) \middle|\, \mathcal{F}_{t-} \right] \right] \nonumber \\
& = & \mathbb{E}\left[ \int_0^\theta  \int_0^\infty \Big| \mathds{1}_{\big\{x \leq \lambda^{n,1}_{t} \big\}} - \mathds{1}_{\big\{x \leq \overline{\lambda}^{1}_{t} \big\}} \Big| dx dt  \right] \nonumber\\
&=& \mathbb{E}\left[  \int_{0}^{\theta} | \lambda^{n,1}_t - \overline{\lambda}^{1}_{t} | dt \right] = \int_{0}^{\theta} \mathbb{E}\left[\Big| \lambda^{n,1}_{t} - \overline{\lambda}^{1}_{t} \Big|  \right] dt,\label{eq:delta:n:intensity}
\end{eqnarray}
where the last equality comes from Fubini's Theorem. It remains to show that the rate of convergence is $n^{-1/2}$.

Computations given in Section \ref{sec:proof:thm:counting:process:convergence} show that in each cases there exists some locally bounded function $g$ depending on $\Psi$, $\mu_{H}$ and $\nu_{F}$ such that $\delta_{n}$ satisfies,
\begin{equation}\label{eq:bound:delta:both:cases}
\left\{
\begin{aligned}
& \delta_{n}(\theta)\leq n^{-1/2} g(\theta) +  \int_{0}^{\theta} \left[ || \Psi ||_{\infty} + {\rm Lip}(\Psi) M_{\mu_{H}}(\theta-z)\right] \delta_n(z) dz && \text{(under \hyperref[ass:H1]{$\mathcal{H}_{1}$})}, \\
& \delta_{n}(\theta)\leq n^{-1/2} g(\theta)  +  \int_{0}^{\theta} {\rm Lip}(\Psi) M_{\mu_{H}}(\theta-z) \delta_n(z) dz && \text{(under \hyperref[ass:H2]{$\mathcal{H}_{2}$})}.
\end{aligned}
\right.
\end{equation}
\begin{sloppypar}
Remark that the only dependence with respect to $n$ lies in $\delta_{n}$. 
Since $g$ is locally bounded and $M_{\mu_{H}}$ is locally integrable, using Lemma \ref{lem:Gronwall:Picard}-{\it (i)}, we end up with $\delta_{n}(\theta)\leq C(\theta,\Psi,\mu_{H},\nu_{F}) \, n^{-1/2}$ where $C$ does not depend on $n$.
\end{sloppypar}
\end{proof}

There are mainly two reasons for the dichotomy of the assumptions \hyperref[ass:H1]{$\mathcal{H}_{1}$} and \hyperref[ass:H2]{$\mathcal{H}_{2}$}. Firstly, up to our knowledge, existence/uniqueness results on the macroscopic system \eqref{eq:edp:PPS} are only valid if the function $\Psi$ is bounded. Secondly, as it appears in \eqref{eq:bound:delta:both:cases}, when the intensity of the $n$-particle system depends on the age, the control of $\delta_{n}(\theta)$ involves the $L^{\infty}$ norm of the function $\Psi$. This boundedness condition is used in order to control the coupling as soon as the ages of the $n$-particle system on the one hand and the i.i.d. copies of the solution of the limit equation on the other hand are different.
Notice that even under \hyperref[ass:H2]{$\mathcal{H}_{2}$}, this coupling result extends \cite[Theorem 8-{\it (ii)}]{delattre2016} since there are two novelties in the present article: random interaction functions $H_{ij}$ as well as dependences with respect to the past $F_{ij}$.

Under more restrictive assumptions (corresponding informally to uniform controls instead of local ones), the rate (with respect to $\theta$) of $C(\theta,\Psi,\mu_{H},\nu_{F})$ given in Theorem \ref{thm:counting:process:convergence} is linear in comparison with a rate which is at least exponential in general. The main assumption corresponds to the stability criterion of Hawkes processes \cite{Bre_Massou}.
\begin{prop}\label{prop:linear:bound}
Under (\hyperref[ass:mu:H:locally:square:integrable]{$\mathcal{A}^{\mu_{H}}_{\infty,2}$}), (\hyperref[ass:nu:F:variance]{$\mathcal{A}^{\nu_{F}}_{2}$}) and (\hyperref[ass:Psi:Lipschitz]{$\mathcal{A}^{\Psi}_{\rm Lip}$}), assume either \hyperref[ass:H1]{$\mathcal{H}_{1}$} or \hyperref[ass:H2]{$\mathcal{H}_{2}$}. Furthermore, assume that both:
\begin{itemize}
\nsp\nsp
\item the functions $M_{\mu_{H}}$ and $\Psi$ are such that $\alpha:={\rm Lip}(\Psi) || M_{\mu_{H}} ||_{1}<1$ and $|| M_{\mu_{H}} ||_{2} <\infty$;
\nsp
\item the functions $m_{\nu_{F}}$ and $V_{\nu_{F}}$ are uniformly bounded.
\end{itemize}
\nsp
Then, the constant $C(\theta,\Psi,\mu_{H},\nu_{F})$ given in Theorem~\ref{thm:counting:process:convergence} can be bounded by $\beta(\Psi,\mu_{H},\nu_{F}) \, \theta$ where the constant $\beta(\Psi,\mu_{H},\nu_{F})$ depends neither on $\theta$ nor on $n$. 

This bound holds for all $\theta\geq 0$ under \hyperref[ass:H2]{$\mathcal{H}_{2}$} whereas it holds for $\theta<(1-\alpha)/|| \Psi ||_{\infty}$ under \hyperref[ass:H1]{$\mathcal{H}_{1}$}. 
\end{prop}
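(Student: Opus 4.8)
The plan is to sharpen the integral inequalities \eqref{eq:bound:delta:both:cases} obeyed by $\delta_{n}(\theta)=\mathbb{E}[\Delta^{1}_{n}(\theta)]$ by replacing the generic estimate of Lemma \ref{lem:Gronwall:Picard} (which only yields an at least exponential constant $C(\theta,\Psi,\mu_{H},\nu_{F})$) with a resolvent argument exploiting the sub-criticality hypothesis $\alpha:={\rm Lip}(\Psi)\|M_{\mu_{H}}\|_{1}<1$. Set $\phi_{n}(\theta):=n^{1/2}\delta_{n}(\theta)$ and let $K$ be the convolution operator with kernel $k$, where $k(u)={\rm Lip}(\Psi)M_{\mu_{H}}(u)$ under \hyperref[ass:H2]{$\mathcal{H}_{2}$} and $k(u)=\|\Psi\|_{\infty}+{\rm Lip}(\Psi)M_{\mu_{H}}(u)$ under \hyperref[ass:H1]{$\mathcal{H}_{1}$}; then \eqref{eq:bound:delta:both:cases} reads $\phi_{n}\leq g+K\phi_{n}$ on every interval $[0,\theta]$, with $g$ the source function produced in Section \ref{sec:proof:thm:counting:process:convergence}. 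The bound $C(\theta,\Psi,\mu_{H},\nu_{F})\leq\beta\theta$ will follow from two facts: (a) under the additional hypotheses the source is \emph{linear}, $g(\theta)\leq c\,\theta$ for some $c=c(\Psi,\mu_{H},\nu_{F})$; and (b) whenever the total mass $\rho(\theta):=\int_{0}^{\theta}k(u)\,du$ is $<1$, the operator $I-K$ is boundedly invertible on $\mathcal{C}([0,\theta])$ and its resolvent maps linear functions to linear functions.

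For (a) I would re-examine the variance computation of the appendix. The first point is a $t$-uniform a priori bound on the limit mean intensity $\overline{\lambda}$: under \hyperref[ass:H1]{$\mathcal{H}_{1}$} this is immediate from (\hyperref[ass:Psi:uniformly:bounded]{$\mathcal{A}^{\Psi}_{\infty}$}); under \hyperref[ass:H2]{$\mathcal{H}_{2}$}, the identity $\overline{\lambda}(t)=\Psi_{0}(\int_{0}^{t}m_{\mu_{H}}(t-z)\overline{\lambda}(z)\,dz+m_{\nu_{F}}(t))$ together with $\Psi_{0}(x)\leq\Psi_{0}(0)+{\rm Lip}(\Psi_{0})|x|$, $|m_{\mu_{H}}|\leq M_{\mu_{H}}$, the boundedness of $m_{\nu_{F}}$ and $\alpha<1$ gives, by the same resolvent reasoning, $\|\overline{\lambda}\|_{\infty}\leq(\Psi_{0}(0)+{\rm Lip}(\Psi_{0})\|m_{\nu_{F}}\|_{\infty})/(1-\alpha)<\infty$. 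Given this, the fluctuation of $\frac1n\sum_{j}\int_{0}^{t}H_{ij}(t-z)\overline{N}^{j}_{+}(dz)$ around $\int_{0}^{t}m_{\mu_{H}}(t-z)\overline{\lambda}(z)\,dz$ is a normalized sum of i.i.d.\ (in $j$) centred variables $Z_{j}-\mathbb{E}[Z_{j}]$ with $\mathbb{E}[Z_{j}^{2}]\leq\|\overline{\lambda}\|_{\infty}\|M_{\mu_{H}}\|_{2}^{2}+\|\overline{\lambda}\|_{\infty}^{2}\|M_{\mu_{H}}\|_{1}^{2}$ (a standard second-moment bound for point integrals with intensity $\leq\|\overline{\lambda}\|_{\infty}$), \emph{uniformly in $t$} thanks to $\|M_{\mu_{H}}\|_{2}<\infty$, so its variance is $O(n^{-1})$ uniformly in $t$; likewise the fluctuation of $\frac1n\sum_{j}F_{ij}(t)$ has variance $\leq n^{-1}\|V_{\nu_{F}}\|_{\infty}$, uniform in $t$. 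Feeding these $t$-uniform $O(n^{-1/2})$ inputs through the Lipschitz bound on $\Psi$ and integrating over $[0,\theta]$ yields $g(\theta)\leq c\,\theta$.

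For (b) I would iterate $\phi_{n}\leq g+K\phi_{n}$. Since $K$ is positivity preserving with operator norm $\rho(\theta)$ on $\mathcal{C}([0,\theta])$, and $\phi_{n}$, $g$ are bounded on $[0,\theta]$ (finiteness of $\delta_{n}$ from Proposition \ref{prop:Thinning:well:posed}, local boundedness of $g$ from Theorem \ref{thm:counting:process:convergence}), one obtains $\phi_{n}\leq\sum_{m\geq0}K^{m}g$ on $[0,\theta]$ as soon as $\rho(\theta)<1$. An induction using $\int_{0}^{z}k(z-w)\,w\,dw\leq z\,\rho(z)\leq z\,\rho(\theta)$ gives $K^{m}g(z)\leq c\,\rho(\theta)^{m}z$ for $z\leq\theta$, hence $\phi_{n}(\theta)\leq c\,\theta/(1-\rho(\theta))$, i.e.\ $C(\theta,\Psi,\mu_{H},\nu_{F})\leq c\,\theta/(1-\rho(\theta))$. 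Under \hyperref[ass:H2]{$\mathcal{H}_{2}$} one has $\rho(\theta)\leq\alpha<1$ for all $\theta$, so $\beta=c/(1-\alpha)$ works for every $\theta\geq0$. Under \hyperref[ass:H1]{$\mathcal{H}_{1}$}, $\rho(\theta)=\|\Psi\|_{\infty}\theta+\alpha$, which is $<1$ precisely for $\theta<(1-\alpha)/\|\Psi\|_{\infty}$; on each $[0,\theta_{0}]$ with $\theta_{0}<(1-\alpha)/\|\Psi\|_{\infty}$ the constant $\beta=c/(1-\alpha-\|\Psi\|_{\infty}\theta_{0})$ is admissible, uniform in $\theta\leq\theta_{0}$ and in $n$.

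I expect (a) to be the main obstacle: it forces one to re-run the appendix estimate keeping track of \emph{global} (not merely local) integrability of $M_{\mu_{H}}$ and of the uniform bounds on $m_{\nu_{F}}$ and $V_{\nu_{F}}$, and in particular to first establish the $t$-uniform control of $\overline{\lambda}$ under \hyperref[ass:H2]{$\mathcal{H}_{2}$}, where $\Psi_{0}$ need not be bounded. A secondary point is that under \hyperref[ass:H1]{$\mathcal{H}_{1}$} the constant $\beta$ is uniform only on intervals bounded away from the critical horizon $(1-\alpha)/\|\Psi\|_{\infty}$ and degenerates at the endpoint, which is exactly the restriction appearing in the statement.
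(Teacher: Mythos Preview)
Your proposal is correct and follows the same overall strategy as the paper: first show the source term $g$ grows at most linearly in $\theta$ (using global integrability of $M_{\mu_H}$, boundedness of $V_{\nu_F}$, and a uniform bound on $\overline{\lambda}$, the latter obtained under \hyperref[ass:H2]{$\mathcal{H}_2$} exactly as you do, which is the content of Lemma~\ref{lem:boundedness:mean:intensity}), then close the convolution inequality \eqref{eq:bound:delta:both:cases}.

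The only noteworthy difference is in how you close the inequality. You invoke a Neumann series for the operator $K$ and control $K^{m}g$ by induction. The paper instead exploits the elementary fact that $\delta_n$ is \emph{nondecreasing} (it is the expectation of a counting process), which gives in one line
\[
\int_0^\theta k(\theta-z)\,\delta_n(z)\,dz \le \rho(\theta)\,\delta_n(\theta),
\]
and then simply rearranges $\delta_n(\theta)\le n^{-1/2}c\,\theta + \rho(\theta)\,\delta_n(\theta)$ to obtain $\delta_n(\theta)\le n^{-1/2}c\,\theta/(1-\rho(\theta))$. This yields the same constant as your resolvent computation but avoids the iteration entirely. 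Your observation that under \hyperref[ass:H1]{$\mathcal{H}_1$} the constant $\beta$ degenerates as $\theta\uparrow(1-\alpha)/\|\Psi\|_\infty$ is accurate and matches the paper's explicit formula, in which $\beta$ contains the factor $(1-\alpha-\|\Psi\|_\infty\theta)^{-1}$.
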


A proof is given in  \ref{sec:proof:prop:linear:bound} where an explicit expression of $\beta$ can be found in Equation \eqref{eq:explicit:bound:beta:delta>0} or \eqref{eq:explicit:bound:beta} depending on the context.\\

As said in the introduction, Hawkes processes seem to be the microscopic point processes underpinning the \eqref{eq:PPS:intro} system introduced in \cite{pakdaman2010dynamics}. There is a striking similarity, modulo a change of time, between :
\begin{itemize}
\item on the one hand, the mean intensity, denoted by $m(t)$, of a linear Hawkes process which is a linear function of $\int_{0}^{t} h(t-z)m(z)dz$ (this is not true in the nonlinear case since a nonlinear intensity function does not commute with the expectation),
\item and on the other hand, the firing rate $p$ in \eqref{eq:PPS:intro} which is a function of $\int_{0}^{t} d(z) n(0,t-z)dz$. 
\end{itemize}
A first step in this direction has been made in \cite{chevallier2015microscopic} in the framework of a network of  i.i.d. Hawkes processes. 
In that case, there is no direct bridge between Hawkes processes (even linear Hawkes processes) and the \eqref{eq:PPS:intro} system as it is shown in \cite{chevallier2015microscopic}. Indeed, when the size of the network goes to infinity, one recovers conditional expectation of the intensity with respect to the age (instead of the mean intensity). By comparison, in the mean-field framework of the present article, the averaging phenomenon takes place at the level of the point measures as it can be seen in Equation \eqref{eq:limit:equation:counting:process} (there is no need to commute the intensity function with the expectation). Equation \eqref{eq:mckean:vlasov:intensity} indeed gives the limit intensity $\overline{\lambda}(t)$ as a function of $\int_{0}^{t} h(t-z)\overline{\lambda}(z)dz$. Furthermore, this limit intensity is given by the macroscopic system.

System \eqref{eq:definition:coupling:ADRHP}-\eqref{eq:definition:coupling:limit:equation} provides an efficient coupling between the spikes attached with the $n$-particle system and the spikes associated with the limiting process. In order to go one step further, a natural question is to wonder about a possible coupling between the ages associated with the two dynamics.
This question is not addressed in \cite{delattre2016} in which the propagation of chaos is discussed at the level of the counting processes only. In comparison, we are here willing to investigate this question carefully. The underlying motivation is not of a mathematical essence only: exhibiting a suitable coupling between the ages of the ADRHP and the ages of the point processes of the McKean-Vlasov type whose intensity depends on time and on the age is the right and proper way to make the connection between the microscopic description of the neural dynamics and the macroscopic equation \eqref{eq:PPS:intro}.\\

In the sequel, Assumption (\hyperref[ass:initial:condition:bounded]{$\mathcal{A}^{\zeta_{N_{-}}}_{\infty}$}) is used. 
It appears that the dependence of the age at time $0$ with respect to the initial condition generates additional difficulties for investigating the mean-field approximation.
To limit the complexity of the analysis, it is quite convenient to assume that the age at time $0$ is bounded, which is precisely what Assumption (\hyperref[ass:initial:condition:bounded]{$\mathcal{A}^{\zeta_{N_{-}}}_{\infty}$}) says.

\begin{cor}\label{cor:age:process:convergence}
With the notations and assumptions of Theorem \ref{thm:counting:process:convergence}, assume that (\hyperref[ass:initial:condition:bounded]{$\mathcal{A}^{\zeta_{N_{-}}}_{\infty}$}) is satisfied.

Then, the age processes $(S^{n,i}_{t-})^{i=1,\dots,n}_{t\geq 0}$ associated with the sequence of ADRHP $(N^{n,i})_{i=1,..,n}$ and the age processes $(\overline{S}^{i}_{t-})^{i\geq 1}_{t\geq 0}$ associated with the i.i.d. solutions $(\overline{N}^i_t)^{i\geq 1}_{t\geq 0}$ of the limit equation satisfy for all $i=1,\dots ,n$ and $\theta>0$,
\begin{equation}\label{eq:wasserstein:age:process}
\mathbb{E}\left[ \sup_{t\in [0,\theta]} | S^{n,i}_{t-} -\overline{S}^i_{t-} |  \right] \leq (M_{T_{0}}+\theta) \,  C(\theta,\Psi,\mu_{H},\nu_{F}) \, n^{-1/2},
\end{equation}
where $C(\theta,\Psi,\mu_{H},\nu_{F})$ is  given in Theorem \ref{thm:counting:process:convergence} and $M_{T_{0}}$ is defined in (\hyperref[ass:initial:condition:bounded]{$\mathcal{A}^{\zeta_{N_{-}}}_{\infty}$}).
\end{cor}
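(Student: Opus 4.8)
The plan is to deduce the control of the age processes directly from the control of the counting processes established in Theorem~\ref{thm:counting:process:convergence}, taking advantage of the fact that the coupling \eqref{eq:definition:coupling:ADRHP}--\eqref{eq:definition:coupling:limit:equation} makes $N^{n,i}$ and $\overline{N}^{i}$ share both the past $N^{i}_{-}$ and the Poisson measure $\Pi^{i}$. The key elementary observation is that, for any $t\in[0,\theta]$, the predictable age $S^{n,i}_{t-}$ (resp. $\overline{S}^{i}_{t-}$) depends only on the restriction of $N^{n,i}$ (resp. $\overline{N}^{i}$) to $(-\infty,t)$. Since $N^{n,i}_{-}=\overline{N}^{i}_{-}$, the two predictable age processes agree on the whole of $[0,\theta]$ as soon as $N^{n,i}$ and $\overline{N}^{i}$ have no point of their symmetric difference in $[0,\theta]$, i.e. as soon as $\Delta^{i}_{n}(\theta)=0$ with the notation \eqref{eq:def:Delta:i} from the proof of Theorem~\ref{thm:counting:process:convergence}.

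First I would record a deterministic pointwise bound on the age. For any $t\in[0,\theta]$, either $N^{n,i}$ has a point in $[0,t)$, in which case $S^{n,i}_{t-}\le t\le\theta$, or it has none, in which case $S^{n,i}_{t-}=t-T_{0}\le\theta+M_{T_{0}}$ by Assumption~(\hyperref[ass:initial:condition:bounded]{$\mathcal{A}^{\zeta_{N_{-}}}_{\infty}$}); the same holds for $\overline{S}^{i}_{t-}$. Hence $|S^{n,i}_{t-}-\overline{S}^{i}_{t-}|\le\theta+M_{T_{0}}$ for every $t\in[0,\theta]$. Combining this with the observation above yields the bound
\[
\sup_{t\in[0,\theta]}\big|S^{n,i}_{t-}-\overline{S}^{i}_{t-}\big|\le(\theta+M_{T_{0}})\,\mathds{1}_{\{\Delta^{i}_{n}(\theta)\ge1\}}.
\]

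Then I would take expectations and apply Markov's inequality together with the estimate already proved in Theorem~\ref{thm:counting:process:convergence}: since $\Delta^{i}_{n}$ is integer-valued, $\mathbb{P}(\Delta^{i}_{n}(\theta)\ge1)\le\mathbb{E}[\Delta^{i}_{n}(\theta)]=\delta_{n}(\theta)\le C(\theta,\Psi,\mu_{H},\nu_{F})\,n^{-1/2}$, which gives
\[
\mathbb{E}\Big[\sup_{t\in[0,\theta]}\big|S^{n,i}_{t-}-\overline{S}^{i}_{t-}\big|\Big]\le(\theta+M_{T_{0}})\,\delta_{n}(\theta)\le(M_{T_{0}}+\theta)\,C(\theta,\Psi,\mu_{H},\nu_{F})\,n^{-1/2},
\]
namely the claimed inequality~\eqref{eq:wasserstein:age:process}; exchangeability of the two families — already used in the proof of Theorem~\ref{thm:counting:process:convergence} — makes the bound independent of $i$.

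There is no serious obstacle here. The only points deserving care are: the verification that $\Delta^{i}_{n}(\theta)=0$ forces the two predictable age processes to coincide on $[0,\theta]$ (which uses that the age at time $t$ involves only points strictly before $t$, that the shared initial condition gives agreement on $(-\infty,0]$, and that the processes admit an intensity so that no point sits exactly at a prescribed deterministic time); and the a priori pointwise bound on the age, which is exactly where Assumption~(\hyperref[ass:initial:condition:bounded]{$\mathcal{A}^{\zeta_{N_{-}}}_{\infty}$}) is used. The passage from "the point processes are close" (a control on the number of non-common points) to "the ages are close" is brutal in the worst case, which is precisely why boundedness of the initial age is imposed; a finer coupling would be needed to avoid it, but it is not required here to reach the rate $n^{-1/2}$.
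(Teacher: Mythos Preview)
Your proof is correct and follows essentially the same route as the paper's: bound the age discrepancy by $(M_{T_0}+\theta)$ times the indicator that the two point processes differ on $[0,\theta]$, then control the probability of that event by $\delta_n(\theta)$ via Markov's inequality and invoke Theorem~\ref{thm:counting:process:convergence}. The only cosmetic difference is that you work directly with $\Delta^{i}_{n}(\theta)$ while the paper routes through $\sup_{t\in[0,\theta]}|N^{n,i}_{t}-\overline{N}^{i}_{t}|$; the two are equivalent for the purpose at hand.
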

\begin{rem}\label{rem:double:age:process}
Reminding the strong connection between the predictable age process and the standard one, stated below Equation \eqref{eq:def:age:process:predictable}, it is clear that the bound \eqref{eq:wasserstein:age:process} is also valid when replacing the predictable age processes by their standard counterparts.
\end{rem}

\begin{proof}
Let us note that, for all $n\geq 1$ and $i=1,\dots ,n$, $N^{n,i}$ and $\overline{N}^{i}$ coincide on the non-positive part, i.e. $N^{n,i}_{-}=\overline{N}^{i}_{-}$. Therefore, $S^{n,i}_{0}=\overline{S}^{i}_{0}$ and  $ \sup_{t\in [0,\theta]} | S^{n,i}_{t-} -\overline{S}^i_{t-} | $ is a.s. upper bounded by $M_{T_{0}}+\theta$ when the trajectories $(S^{n,i}_{t-})_{t\in[0,\theta]}$ and $(\overline{S}^{i}_{t-})_{t\in [0,\theta]}$ are different and is equal to 0 otherwise. Therefore, we have the following bound
\begin{equation*}
\mathbb{E}\left[  \sup_{t\in [0,\theta]} | S^{n,i}_{t-} -\overline{S}^i_{t-} |  \right]\leq (M_{T_{0}}+\theta) \mathbb{P}\left( \left(S_{t-}^{n,i}\right)_{t\in\left[0,\theta\right]}\neq\left(\overline{S}_{t-}^{i}\right)_{t\in\left[0,\theta\right]} \right).
\end{equation*}
Yet if the trajectories are different, there is at least one point between $0$ and $\theta$ which is not common to both $N^{n,i}_{+}$ and $\overline{N}^{i}_{+}$, that is $\sup_{t\in [0,\theta]} |N^{n,i}_{t} - \overline{N}^{i}_{t}| \neq 0$, hence
\begin{equation}\label{eq:age:different:bound}
\mathbb{P}\left( \left(S_{t-}^{n,i}\right)_{t\in\left[0,\theta\right]}\neq\left(\overline{S}_{t-}^{i}\right)_{t\in\left[0,\theta\right]} \right) = \mathbb{P}\left( \sup_{t\in [0,\theta]} |N^{n,i}_{t} - \overline{N}^{i}_{t}| \neq 0 \right).
\end{equation}
Moreover, since counting processes are piecewise constant with jumps of height $1$ a.s., it is clear that
\begin{equation}\label{eq:more:than:one:point:bound}
\mathbb{P}\Big( \sup_{t\in [0,\theta]} |N^{n,1}_{t} - \overline{N}^{1}_{t}| \neq 0 \Big)  \leq \mathbb{E}\Big[ \sup_{t\in [0,\theta]} | N^{n,1}_{t} -\overline{N}^1_{t} | \Big],
\end{equation}
where we used Markov's inequality.
Finally, inequality~\eqref{eq:wasserstein:age:process} clearly follows from Theorem~\ref{thm:counting:process:convergence}.
\end{proof}

\subsection{Mean-field approximations}
\label{sec:mean:field:counting}
Inspired by the seminal work of Sznitman \cite{Sznitman_91}, we now obtain, from the results of the previous section, the convergence of the $n$-particle system towards the limit equation:
\begin{itemize}
\item the empirical distribution of the point processes associated with the $n$-particle system converges to the distribution of the point process solution of the limit equation,
\item the empirical distribution of the age processes associated with the $n$-particle system converges to the distribution of the age process associated with the solution of the limit equation,
\end{itemize}
This result, together with the ones from the previous paragraphs, are typical of what is known as the \emph{propagation of chaos theory of interacting particle system}. In particular, it says that $k$ fixed neurons behave independently and identically when the size of the network goes to infinity. Their spiking dynamics being described by the limit equation \eqref{eq:limit:equation:counting:process}.

\begin{cor}\label{cor:mean:field:approx:counting:and:age:process}
Let $\mathcal{L}(X)$ denote the distribution of some random variable $X$ and $\mathcal{D}(\mathbb{R}_{+})$ denote the space of c\`adl\`ag functions from $\mathbb{R}_{+}$ to $\mathbb{R}$ endowed with the Skorokhod topology.

With notations and assumptions of Theorem~\ref{thm:counting:process:convergence}, we have the following mean-field approximations:
\begin{itemize}
\item the weak convergence in $\mathcal{P}(\mathcal{D}(\mathbb{R}_{+}))$ of the empirical measure of counting processes,
\begin{equation}\label{eq:mean:field:counting}
\frac{1}{n}\sum_{i = 1}^n \delta_{(N^{n,i}_t)_{t\geq 0}}\xrightarrow[n\rightarrow \infty]{} \mathcal{L}\big(( \overline{N}^{1}_t)_{t\geq 0}\big);
\end{equation}
\item if furthermore (\hyperref[ass:initial:condition:bounded]{$\mathcal{A}^{\zeta_{N_{-}}}_{\infty}$}) holds, the weak convergence in $\mathcal{P}(\mathcal{D}(\mathbb{R}_{+}))$ of the empirical measure of the standard age processes,
\begin{equation}\label{eq:mean:field:age}
\frac{1}{n}\sum_{i = 1}^n \delta_{(S^{n,i}_{t})_{t\geq 0}}\xrightarrow[n\rightarrow \infty]{} \mathcal{L}\big(( \overline{S}^{1}_{t})_{t\geq 0}\big).
\end{equation}
\end{itemize}
Both convergences also hold in probability since the limits are constant in $\mathcal{P}(\mathcal{D}(\mathbb{R}_{+}))$.

\begin{sloppypar}
Finally, if furthermore (\hyperref[ass:Psi:uniformly:bounded]{$\mathcal{A}^{\Psi}_{\infty}$}) and (\hyperref[ass:initial:condition:density]{$\mathcal{A}^{\zeta_{N_{-}}}_{u^{\rm in}}$}) hold, then the unique solution $u$ of the system \eqref{eq:edp:PPS} with initial condition that  $u(0,\cdot)=u^{\rm in}$ is such that $u_{t}:=u(t,\cdot)$ is the density of the age $\overline{S}^{1}_{t-}$ and for all $\theta>0$, 
\begin{equation}\label{eq:rate:Wasserstein}
\sup_{t\in [0,\theta]} \mathbb{E}\left[ W_{1}\left( \frac{1}{n}\sum_{i = 1}^n \delta_{S^{n,i}_{t-}} , u_{t} \right) \right]  \leq   
D(\theta,\Psi,\mu_{H},\nu_{F},M_{T_{0}}) \, n^{-1/2},
\end{equation}
where $W_{1}$ denotes the standard $1$-Wasserstein distance and the constant $D(\theta,\Psi,\mu_{H},\nu_{F},M_{T_{0}})$ does not depend on $n$.
\end{sloppypar}
\end{cor}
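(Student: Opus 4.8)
The plan is to run the classical propagation-of-chaos argument of Sznitman, lifting the pathwise couplings of Theorem~\ref{thm:counting:process:convergence} and Corollary~\ref{cor:age:process:convergence} to the level of empirical measures. Equip $\mathcal{D}(\mathbb{R}_{+})$ with a complete separable metric $\rho$ metrizing the Skorokhod topology and satisfying $\rho(f,g)\leq\int_{0}^{\infty}e^{-u}\bigl(1\wedge\|f-g\|_{\infty,[0,u]}\bigr)\,du$ (a standard construction on $\mathcal{D}(\mathbb{R}_{+})$), and equip $\mathcal{P}(\mathcal{D}(\mathbb{R}_{+}))$ with the associated Kantorovich--Rubinstein (bounded Lipschitz) distance $d$, which is bounded, metrizes weak convergence, and satisfies $d\bigl(\frac1n\sum_{i}\delta_{x_{i}},\frac1n\sum_{i}\delta_{y_{i}}\bigr)\leq\frac1n\sum_{i}\rho(x_{i},y_{i})$ by coupling along the common index.

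For \eqref{eq:mean:field:counting}, set $\mu_{n}=\frac1n\sum_{i=1}^{n}\delta_{(N^{n,i}_{t})_{t\geq0}}$ and $\nu_{n}=\frac1n\sum_{i=1}^{n}\delta_{(\overline{N}^{i}_{t})_{t\geq0}}$ and write $d(\mu_{n},\mathcal{L}(\overline{N}^{1}))\leq d(\mu_{n},\nu_{n})+d(\nu_{n},\mathcal{L}(\overline{N}^{1}))$. By exchangeability and the choice of $\rho$, $\mathbb{E}[d(\mu_{n},\nu_{n})]\leq\mathbb{E}[\rho(N^{n,1},\overline{N}^{1})]\leq\int_{0}^{\infty}e^{-u}\,\mathbb{E}\bigl[1\wedge\sup_{t\in[0,u]}|N^{n,1}_{t}-\overline{N}^{1}_{t}|\bigr]\,du$, and Theorem~\ref{thm:counting:process:convergence} bounds the inner expectation by $C(u,\Psi,\mu_{H},\nu_{F})\,n^{-1/2}$, so dominated convergence in $u$ gives $\mathbb{E}[d(\mu_{n},\nu_{n})]\to0$. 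Since the $\overline{N}^{i}$ are i.i.d. with values in the Polish space $\mathcal{D}(\mathbb{R}_{+})$, Varadarajan's theorem yields $d(\nu_{n},\mathcal{L}(\overline{N}^{1}))\to0$ a.s., hence in $L^{1}$ because $d$ is bounded. Thus $\mathbb{E}[d(\mu_{n},\mathcal{L}(\overline{N}^{1}))]\to0$: the empirical measure converges to $\mathcal{L}(\overline{N}^{1})$ in $L^{1}(d)$, hence in probability, hence in distribution, which is \eqref{eq:mean:field:counting}. The statement \eqref{eq:mean:field:age} follows verbatim, replacing the counting processes by the standard age processes $(S^{n,i}_{t})_{t\geq0}$ and $(\overline{S}^{i}_{t})_{t\geq0}$ (which are c\`adl\`ag), using Corollary~\ref{cor:age:process:convergence} together with Remark~\ref{rem:double:age:process} in place of Theorem~\ref{thm:counting:process:convergence} --- here (\hyperref[ass:initial:condition:bounded]{$\mathcal{A}^{\zeta_{N_{-}}}_{\infty}$}) is required --- and noting that, since $N^{n,i}_{-}=\overline{N}^{i}_{-}$, the two age paths coincide on $[0,u]$ as soon as the counting processes do, so the inner expectation is again controlled by $\mathbb{E}[\sup_{t\in[0,u]}|S^{n,1}_{t}-\overline{S}^{1}_{t}|]\to0$.

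For the quantitative bound, Proposition~\ref{prop:age:process:limit:equation:PPS} (valid under $\mathcal{H}_{1}$) identifies $u_{t}=u(t,\cdot)$ with the law/density of $\overline{S}^{1}_{t-}$. Fix $t\in[0,\theta]$ and split
\begin{equation*}
W_{1}\!\left(\tfrac{1}{n}\sum_{i=1}^{n}\delta_{S^{n,i}_{t-}},u_{t}\right)\leq W_{1}\!\left(\tfrac{1}{n}\sum_{i=1}^{n}\delta_{S^{n,i}_{t-}},\tfrac{1}{n}\sum_{i=1}^{n}\delta_{\overline{S}^{i}_{t-}}\right)+W_{1}\!\left(\tfrac{1}{n}\sum_{i=1}^{n}\delta_{\overline{S}^{i}_{t-}},u_{t}\right).
\end{equation*}
The first term is at most $\frac1n\sum_{i}|S^{n,i}_{t-}-\overline{S}^{i}_{t-}|$, whose expectation, by exchangeability and Corollary~\ref{cor:age:process:convergence}, is bounded by $(M_{T_{0}}+\theta)\,C(\theta,\Psi,\mu_{H},\nu_{F})\,n^{-1/2}$, uniformly in $t\in[0,\theta]$. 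For the second term, note that under (\hyperref[ass:initial:condition:bounded]{$\mathcal{A}^{\zeta_{N_{-}}}_{\infty}$}) one has $0\leq\overline{S}^{i}_{t-}\leq\theta+M_{T_{0}}$ a.s., so the $\overline{S}^{i}_{t-}$ are i.i.d. real variables supported in $[0,\theta+M_{T_{0}}]$; using the one-dimensional identity $W_{1}(\alpha,\beta)=\int_{\mathbb{R}}|F_{\alpha}(x)-F_{\beta}(x)|\,dx$ and the bound $\mathbb{E}\,|F_{n}(x)-F(x)|\leq\bigl(F(x)(1-F(x))/n\bigr)^{1/2}\leq\tfrac12 n^{-1/2}$ for the empirical c.d.f. $F_{n}$ of the sample and the c.d.f. $F$ of $u_{t}$, integration over the support gives $\mathbb{E}\bigl[W_{1}(\frac1n\sum_{i}\delta_{\overline{S}^{i}_{t-}},u_{t})\bigr]\leq\tfrac12(\theta+M_{T_{0}})\,n^{-1/2}$, again uniformly in $t\in[0,\theta]$. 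Summing the two bounds and taking $\sup_{t\in[0,\theta]}$ yields \eqref{eq:rate:Wasserstein} with $D$ depending only on $\theta,\Psi,\mu_{H},\nu_{F},M_{T_{0}}$.

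No estimate beyond Theorem~\ref{thm:counting:process:convergence}, Corollary~\ref{cor:age:process:convergence} and Proposition~\ref{prop:age:process:limit:equation:PPS} is needed; the argument is essentially bookkeeping. The one point requiring care is the Skorokhod topology on the non-compact half-line: it forces the metric $\rho$ to be built by geometric damping over compact time intervals so that the coupling bounds --- available only on each $[0,u]$ with a constant that grows in $u$ --- can still be recombined by dominated convergence. I expect this, rather than any probabilistic difficulty, to be the delicate step.
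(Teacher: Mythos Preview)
Your proof is correct. For the two weak-convergence statements you take a genuinely different route from the paper: instead of invoking Sznitman's equivalence (that convergence of the empirical measure to a deterministic law is equivalent to convergence in law of any fixed pair $(N^{n,1},N^{n,2})$ to an independent pair, checked via Lipschitz test functions and \eqref{eq:bound:full:expect:counting:process}), you metrize $\mathcal{P}(\mathcal{D}(\mathbb{R}_+))$ directly and split $d(\mu_n,\mathcal{L}(\overline{N}^1))$ through the i.i.d.\ empirical measure $\nu_n$, controlling the first piece by the coupling and the second by Varadarajan. Both approaches work; yours yields $L^1$-convergence of the metric as a byproduct, while the paper's avoids having to build a Skorokhod metric on $\mathbb{R}_+$ with the damping property you invoke (which is indeed where the only care is needed, as you note).

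For the quantitative bound \eqref{eq:rate:Wasserstein} your argument coincides with the paper's for the first term of the triangle inequality. For the second term the paper simply cites \cite[Theorem~1]{fournier2014rate} for i.i.d.\ bounded real variables, whereas you give the elementary one-dimensional computation via $W_1=\int|F_n-F|$ and $\mathbb{E}|F_n(x)-F(x)|\le\tfrac12 n^{-1/2}$; this is more self-contained and even gives an explicit constant $\tfrac12(\theta+M_{T_0})$.
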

\begin{rem}
Of course, the convergence \eqref{eq:mean:field:age} is also valid when replacing the standard age processes by their predictable counterparts. However, let us mention that the predictable age processes belong to $\mathcal{G}(\mathbb{R}_{+})$, the space of càglàd functions (continuous to the left with right limits). Hence the convergence of the empirical measure of the predictable age processes holds in $\mathcal{P}(\mathcal{G}(\mathbb{R}_{+}))$, where we endow $\mathcal{G}(\mathbb{R}_{+})$ with an analoguous of the Skorokhod topology
\end{rem}
\begin{proof}
The space of càdlàg functions $\mathcal{D}(\mathbb{R}_{+})$ endowed with the Skorokhod topology is a Polish space. 
So, according to \cite[Proposition 2.2]{Sznitman_91} or \cite[Proposition 4.2]{Meleard_96}, to show the first limit \eqref{eq:mean:field:counting}, it suffices to check that $((N^{n,1}_t)_{t\geq 0},(N^{n,2}_t)_{t\geq 0})$ converges in distribution, as $n\rightarrow +\infty$, to two independent copies of $(\overline{N}^{1}_t)_{t\geq 0}$. Since the convergence with respect to Lipschitz continuous test functions is sufficient in order to prove the convergence in distribution (Portemanteau Theorem \cite{Klenke2007}), the first limit clearly follows from both \eqref{eq:bound:full:expect:counting:process} and the fact that the uniform convergence topology on compact time intervals is finer than the Skorokhod topology.
The proof of the second limit is similar with the difference that it follows from \eqref{eq:wasserstein:age:process} instead of \eqref{eq:bound:full:expect:counting:process}.

The link between the solution of \eqref{eq:edp:PPS} and the age processes associated with the solution of the limit equation \eqref{eq:limit:equation:counting:process} is given by Proposition \ref{prop:age:process:limit:equation:PPS}. 

The rate of convergence for the $1$-Wasserstein distance stated in \eqref{eq:rate:Wasserstein} is a consequence of the rate of convergence for i.i.d. real valued random variables. Fix $\theta>0$ and let $t$ be in $[0,\theta]$. First, using the exchangeability of the particles, it follows from Corollary \ref{cor:age:process:convergence} that there exists a constant $C(\theta,\Psi,\mu_{H},\nu_{F},M_{T_{0}})$ such that
\begin{equation*}
\mathbb{E}\left[ W_{1}\left( \frac{1}{n}\sum_{i = 1}^n \delta_{S^{n,i}_{t-}} , \frac{1}{n}\sum_{i = 1}^n \delta_{\overline{S}^{i}_{t-}} \right) \right] \leq   C(\theta,\Psi,\mu_{H},\nu_{F},M_{T_{0}}) \, n^{-1/2}.
\end{equation*} 
Then, applying \cite[Theorem 1]{fournier2014rate} to the i.i.d. random variables $\overline{S}^{i}_{t-}$, that are bounded by $M_{T_{0}}+ \theta$, we deduce that there exists a constant $\tilde{C}(\theta,M_{T_{0}})$ such that $\mathbb{E}[ W_{1}( \frac{1}{n}\sum_{i = 1}^n \delta_{\overline{S}^{i}_{t-}} , u_{t} ) ]\leq \tilde{C}(M_{T_{0}},\theta) n^{-1/2}$.
Finally, the triangular inequality for the $1$-Wasserstein distance gives \eqref{eq:rate:Wasserstein}.
\end{proof}

The first mean-field approximation \eqref{eq:mean:field:counting} is a generalization of the one given in \cite[Theorem 8-{\it (iii)}]{delattre2016} where mean-field interacting Hawkes processes are approximated by Poisson processes of the McKean-Vlasov type. Here, the limit processes are point processes of the McKean-Vlasov type whose intensity depends on time (like Poisson processes) and on the age.

Moreover, Equation \eqref{eq:rate:Wasserstein} extends the result on the rate of convergence for the age processes given in \cite[Section 5]{quininao2015microscopic}.

\section{Conclusion}

We present a generalization of mean-field interacting Hawkes processes, namely age dependent random Hawkes processes (ADRHPs), which are well-adapted to neuroscience modelling. From a biological point of view, they encompass some interesting features such as refractory period, synaptic integration or random synaptic weights. These processes are studied in a mean-field situation and we show in Theorem \ref{thm:counting:process:convergence} and Corollary \ref{cor:mean:field:approx:counting:and:age:process} that, as the number of particles goes to infinity, they can be well approximated by point processes of the McKean-Vlasov type whose intensity depends on time and on the age. These limit point processes are closely related to the age structured PDE system introduced by Pakdaman, Perthame and Salort, namely \eqref{eq:PPS:intro}, as shown in Proposition \ref{prop:age:process:limit:equation:PPS}.

Hence, using the theory of mean-field approximations, the present article makes a bridge between the microscopic modelling given by Hawkes processes, or more generally age dependent random Hawkes processes, and the macroscopic modelling given by the \eqref{eq:PPS:intro} system. This bridge is presented under the main assumption that the intensity of the microscopic point processes is bounded. In this sense, the present article offers an answer to the question left open in \cite{chevallier2015microscopic}. This legitimises the convolution term $X(t)$ in the \eqref{eq:PPS:intro} system as well as opens the way to the study of new assumptions on the spiking rate $p$ appearing in the \eqref{eq:PPS:intro} system from a more analytical point of view. Up to our knowledge, this has not been done yet.

The present article gives somehow the law of large numbers for a generalization of Hawkes processes. It could be interesting to investigate how these processes fluctuates around their mean limit or in other words find some kind of functional central limit theorem for Hawkes processes in a mean-field framework.

As noted, random synaptic weights can be considered in this study. However, they are supposed to be, in some sense, independent and identically distributed which can be considered as an unrealistic assumption. Inspired by \cite{faugeras2014asymptotic}, it could be interesting to see how correlated synaptic weights could be handled in the Hawkes processes framework.

On a different path, it could be interesting to see how locally stationary Hawkes processes, as introduced in \cite{rouefftime}, behave in a mean-field situation. Indeed, these processes may take into account the dynamics of the synaptic weights occurring in the neural network.

\paragraph{Acknowledgement}
This research was partly supported by the french Agence Nationale de la Recherche (ANR 2011 BS01 010 01 projet Calibration) and by the interdisciplanary axis MTC-NSC of the University of Nice Sophia-Antipolis.
I would like to thank Patricia Reynaud-Bouret and François Delarue for helpful discussions which improved this paper.

\pagebreak
\appendix

\section{Proofs}

\subsection{Proof of Proposition \ref{prop:Thinning:well:posed}}
\label{sec:proof:prop:Thinning:well:posed}

Let us denote $G_{i}:t\mapsto  \sup_{s\geq 0} \Psi(s,0) + {\rm Lip}(\Psi) n^{-1} \sum_{j=1}^{n} | F_{ij}(t) |$. Thanks to (\hyperref[ass:nu:F:locally:bounded]{$\mathcal{A}^{\nu_{F}}_{1}$}) and Fubini's Theorem we have, for all $T>0$, $\mathbb{E}[\int_{0}^{T} |F_{11}(t)| dt ]= \int_{0}^{T} \mathbb{E}[|F_{11}(t)|] dt<+\infty$. In particular, for all $1\leq i,j\leq n$, $t\mapsto | F_{ij}(t) |$ is locally integrable almost surely. Hence, there exists a subset $\Omega$ of probability 1 such that, on $\Omega$, $G_{i}$ is locally integrable for all $i$. Fixing the $G_{i}$'s, one can apply Lemma \ref{lem:stochastic:domination} (with $a_{i}={\rm Lip}(\Psi)$ and $g_{i}=G_{i}$) to deduce that the processes $(N_t^i)^{i=1,..,n}_{t \geq 0}$ are dominated by the processes $(\tilde{N}_t^i)^{i=1,..,n}_{t \geq 0}$ (defined by \eqref{eq:Hawkes:Thinning:dominating}) and so are  well-defined.

It remains to show that the function $t\mapsto \mathbb{E}\left[ N_{t}^{1} \right]$ is locally bounded. First, let us study the dominating processes. We have
\begin{equation*}
\mathbb{E}\left[ \frac{1}{n} \sum_{i=1}^{n} \tilde{N}^{i}_{t} \right]  \leq   \frac{1}{n} \sum_{i=1}^{n} \int_{0}^{t}  \mathbb{E}\left[G_{i}(t')\right]  dt' 
+ {\rm Lip}(\Psi)  \int_{0}^{t} M_{\mu_{H}}(t-t') \mathbb{E}\left[ \frac{1}{n}\sum_{j=1}^{n} \tilde{N}^{j}_{t'} \right] dt',
\end{equation*}
where we used Lemma \ref{lem:double:Fubini}. Next, $t\mapsto \mathbb{E}\left[G_{i}(t)\right]=\sup_{s\geq 0} \Psi(s,0) + {\rm Lip}(\Psi) \mathbb{E}\left[ | F_{11}(t) | \right]$ is locally bounded and $M_{\mu_{H}}$ is locally integrable so Lemma \ref{lem:Gronwall:Picard}-{\it (i)} gives that $$t\mapsto \mathbb{E}\left[ n^{-1} \sum_{i=1}^{n} \tilde{N}^{i}_{t} \right]=\mathbb{E}\left[ \tilde{N}^{1}_{t} \right]$$ is locally bounded. Finally, the stochastic domination (in particular, $\mathbb{E}[ N_{t}^{1}  ]\leq \mathbb{E}[ \tilde{N}_{t}^{1} ]$) gives the result.

\subsection{Proof of Proposition \ref{prop:existence:uniqueness:PPS:linear}}
\label{sec:proof:prop:existence:uniqueness:PPS:linear}

First, in order to be consistent with the formalism used in \cite{canizo2013measure} we must rewrite the system \eqref{eq:edp:PPS:linear} in a single equation in the following way
\begin{equation}\label{eq:PPS:linear:CCC}
\frac{\partial u\left(t,s\right)}{\partial t}+\frac{\partial u\left(t,s\right)}{\partial s} = N_{1}(t,s,u)+ \delta_{s=0}n_{2}(t,u),
\end{equation}
with initial condition $u^{\rm in}$ where
\begin{equation*}
\begin{cases}
N_{1}(t,s,u):= -f(t,s)u(t,s)\\ 
n_{2}(t,u):= \int_{0}^{+\infty} f(t,s') u(t,s') ds'.
\end{cases}
\end{equation*}
The use of the Dirac mass localized in age equal to $0$ represents the boundary condition that is the second equation of \eqref{eq:edp:PPS:linear}. 

Note that the general result \cite[Theorem 2.4.]{canizo2013measure} gives existence and uniqueness of solution in $BC(\mathbb{R}_{+},\mathcal{M}(\mathbb{R}))$ and not $BC(\mathbb{R}_{+},\mathcal{M}(\mathbb{R}_{+}))$ even if the initial condition has support contained in $\mathbb{R}_{+}$. However, as explained in \cite[Section 3.3.]{canizo2013measure}, it suffices to extend the equation for $s$ in $\mathbb{R}$, to apply \cite[Theorem 2.4.]{canizo2013measure} and then to check that the support of the solution is conserved in the sense that: if $u^{\rm in}$ has support on $\mathbb{R}_{+}$ then the unique solution given by the Theorem has also support contained in $\mathbb{R}_{+}$ for all time $t\geq 0$.

Hence, consider Equation \eqref{eq:PPS:linear:CCC} but with $s$ being in $\mathbb{R}$ by mirror symmetry for definiteness (that is $N_{1}(t,s,u)=N_{1}(t,-s,u)$ and so $f(t,s)=f(t,-s)$). Let us check that the assumptions of \cite[Theorem 2.4.]{canizo2013measure} are satisfied.

-(H1) and (H2) are clearly satisfied.

-(H3). We need to verify that $N_{1}$ and $n_{2}$ are continuous in $t$ with respect to the usual topology and in $u$ with respect to the topology induced by the bounded Lipschitz norm (denoted by $||.||_{BL}$). On the one hand, using the boundedness of $f$, we have, with $u_{t}$ denoting the measure $u(t,\cdot)$,
\begin{multline*}
|| N_{1}(t+t',s,u+\tilde{u}) - N_{1}(t,s,u) ||_{BL} \leq \sup_{s\in \mathbb{R}} |f(t+t',s)-f(t,s)| || u_{t} ||_{BL}\\
+ || f ||_{\infty} || \tilde{u}_{t+t'}||_{BL} + || f ||_{\infty} ||u_{t+t'}-u_{t}||_{BL}.
\end{multline*}
As $t'$ and $\tilde{u}$ converge to $0$, the first term converges to $0$ since $f$ is (uniformly in $s$) continuous with respect to $t$, the second one clearly converges to $0$ and the third one converges to $0$ since $u$ belongs to $\mathcal{BC}(\mathbb{R}_{+},\mathcal{M}(\mathbb{R}))$.

On the other hand, using once again the boundedness of $f$, we have,
\begin{equation*}
| n_{2}(t+t',u+\tilde{u}) - n_{2}(t,u) | \leq \sup_{s\in \mathbb{R}} |f(t+t',s)-f(t,s)|  || u_{t} ||_{BL}
 + || f ||_{\infty} ||u_{t+t'}-u_{t}||_{BL},
\end{equation*}
which converges to $0$ as $t'$ tends to $0$.

-(H4). It suffices to show that $N_{1}$ and $n_{2}$ are Lipschitz continuous with respect to the variable $u$. On the one hand, we have
\begin{equation*}
|| N_{1}(t,s,u) - N_{1}(t,s,v) ||_{BL} \leq || f ||_{\infty} ||u_{t}-v_{t}||_{BL}.
\end{equation*}
On the other hand, we have
\begin{equation*}
| n_{2}(t,u) - n_{2}(t,v) |\leq || f ||_{\infty} ||u_{t}-v_{t}||_{BL}.
\end{equation*}

-(H5). Here, it suffices to check that $N_{1}(t,s,u)+ \delta_{s=0}n_{2}(t,u)$ carries bounded sets in total variation norm to bounded sets in total variation norm. Denoting $||.||_{TV}$ the total variation norm, we have $||N_{1}(t,s,u) + \delta_{s=0}n_{2}(t,u)||_{TV}\leq 2 ||f||_{\infty} || u||_{TV}$.

Finally, the argument to prove conservation of the support for solutions being the same as the one elaborated in \cite[Section 3.3.]{canizo2013measure}, it is not reproduced here.

\subsection{Proof of Proposition \ref{prop:characteristics:PPS:linear}}
\label{sec:proof:prop:characteristics:PPS:linear}
 
The method of characteristics applied to the first equation of \eqref{eq:edp:PPS:linear} suggests to consider:
\begin{itemize}
\item for all $z\geq 0$, $u^{z}:t\mapsto u(t,t+z)$ satisfying
\begin{equation*}
\frac{d}{dt} u^{z}(t) = -f(t,z+t)u^{z}(t)
\end{equation*}
hence for all $t\geq 0$,
\begin{equation*}
u^{z}(t)=u^{z}(0) \exp\left( -\int_{0}^{t} f(t',z+t') dt' \right)
\end{equation*}
and so, using that $u^{z}(0)=u^{\rm in}(z)$ and letting $s=z+t$, one has \eqref{eq:characteristics:s>t}.
\item for all $z\geq 0$, $u^{z}:s\mapsto u(s+z,s)$ satisfying
\begin{equation*}
\frac{d}{ds} u^{z}(s) = -f(s+z,s) u^{z}(s)
\end{equation*}
hence for all $s\geq 0$,
\begin{equation*}
u^{z}(s)= u^{z}(0) \exp\left( -\int_{0}^{s} f(s'+z,s') ds' \right)
\end{equation*}
and so, using that $u^{z}(0)=u(z,0)$ and letting $t=z+s$, one has
\begin{equation*}
u(t,s) = u(t-s,0) \exp\left( -\int_{0}^{s} f(t-s+s',s') ds'\right),\quad \text{for $t\geq s$,}
\end{equation*}
which is not exactly \eqref{eq:characteristics:t>s}. 
Here, $u(t-s,0)$ is just a parameter which is not constrained by the first equation of \eqref{eq:edp:PPS:linear}. However, it is characterized by the second equation of \eqref{eq:edp:PPS:linear} as explained below. 
\end{itemize}
For any $T>0$, consider the map
\begin{equation*}
\begin{array}{cccc}
G_{T}:\ & L^{\infty}([0,T]) &\longrightarrow& L^{\infty}([0,T])\\
 & u_{0} &\longmapsto &\left(\displaystyle t\mapsto G(u_{0})(t) \right),
\end{array}
\end{equation*}
where 
\begin{multline*}
G(u_{0})(t):=  \int_{0}^{t} f(t,s) u_{0}(t-s) \exp\left( -\int_{0}^{s} f(t-s+s',s') ds'\right) ds\\
+ \int_{t}^{+\infty} f(t,s) u^{\rm in}(s-t) \exp\left( -\int_{0}^{t} f(t',s-t+t') dt'\right) ds.
\end{multline*}
Note that the characteristics and the second equation of \eqref{eq:edp:PPS:linear} suggest that, denoting $u$ the solution of \eqref{eq:edp:PPS:linear}, $u(\cdot ,0)$ is bounded and that its trace on $[0,T]$ is a fixed point of $G_{T}$.  Using the boundedness of $f$ and the fact that the argument in the exponential is non-positive, we have for any $u_{0},v_{0}$ in $L^{\infty}([0,T])$,
\begin{equation*}
\left| G_{T}(u_{0})(t) -G_{T}(v_{0})(t) \right| \leq  ||f||_{\infty} \int_{0}^{t} |u_{0}(t-s) - v_{0}(t-s)| ds
\leq  T ||f||_{\infty} ||u_{0}-v_{0}||_{ L^{\infty}([0,T])}. 
\end{equation*}
Now, fix $T>0$ such that $T|| f ||_{\infty}\leq 1/2$ so that $G_{T}$ is a contraction and admits a unique fixed point. Note that $G_{T}$ maps non-negative functions to non-negative functions, so that the fixed point of $G_{T}$ is a non-negative function.
Iterating this fixed point gives the existence and uniqueness of a locally bounded function $u_{0}$ (which is non-negative) such that for all $t\geq 0$, $u_{0}(t)=G(u_{0})(t)$ (see the end of the proof of Lemma \ref{lem:Xu:well:defined} for the same kind of argument in a more detailed form). 
Until the end of the proof, $u_{0}$ will denote this fixed point.

It only remains to check that $u$ is a solution of \eqref{eq:edp:PPS:linear} in the weak sense.
Let $\varphi$ be in $\mathcal{C}^{\infty}_{c,b}(\mathbb{R}_{+}^{2})$, let us compute $\int_{\mathbb{R}_{+}^{2}} \left(\frac{\partial}{\partial t}+\frac{\partial}{\partial s}\right) \varphi\left(t,s\right)u\left( t, s\right) dt ds$. Sticking with the decomposition of the representation given by \eqref{eq:characteristics:s>t}-\eqref{eq:characteristics:t>s} and using integration by parts to go backward in the heuristic given by the method of characteristics, one has
\begin{multline}\label{eq:weak:sense:s>t}
\int_{s\geq t} \left(\frac{\partial}{\partial t}+\frac{\partial}{\partial s}\right) \varphi\left(t,s\right)u\left( t, s\right) dt ds = \int_{s\geq t} \varphi(t,s) f(t,s) u(t,s) dt ds \\
- \int_{s=0}^{+\infty} \varphi(0,s) u^{\rm in}(s) ds,
\end{multline}
and
\begin{multline}\label{eq:weak:sense:t>s}
\int_{t\geq s} \left(\frac{\partial}{\partial t}+\frac{\partial}{\partial s}\right) \varphi\left(t,s\right)u\left( t, s\right) dt ds = \int_{t\geq s} \varphi(t,s) f(t,s) u(t,s) dt ds \\
- \int_{t=0}^{+\infty} \varphi(t,0) u_{0}(t) dt.
\end{multline}
Remarking that the definition of $u_{0}$ as the fixed point of $G$ gives that $u$ defined by \eqref{eq:characteristics:s>t}-\eqref{eq:characteristics:t>s} satisfies the second equation of \eqref{eq:edp:PPS:linear} in a strong sense, one deduces that \eqref{eq:edp:PPS:linear:weak:sense} is satisfied by gathering \eqref{eq:weak:sense:s>t} and \eqref{eq:weak:sense:t>s}.

\subsection{Proof of \eqref{eq:F:contraction}}
\label{sec:proof:thm:existence:uniqueness:PPS}
Let $Y_{1}$ and $Y_{2}$ be two fixed locally bounded functions and denote $u_{1}:=u_{Y_{1}}$ and $u_{2}:=u_{Y_{2}}$ the two solutions associated with $Y_{1}$ and $Y_{2}$ (with same initial condition $u^{\rm in}$) and $\tilde{u}(t):=u_{1}(t,0) - u_{2}(t,0)$ for all $t\geq 0$. Using the characteristics, i.e. \eqref{eq:characteristics:s>t} and \eqref{eq:characteristics:t>s}, one deduces from the second equation in \eqref{eq:edp:PPS:frozen} that for $i=1$ or $2$,
$
u_{i}(t,0)= \upsilon_{i}^{0,t} + \upsilon_{i}^{t,\infty}
$
where
\begin{equation*}
\left\{
\begin{aligned}
& \upsilon_{i}^{0,t}:= \int_{0}^{t} \Psi (s,Y_{i}(t)+f_{0}(t)) u_{i}(t-s,0) e^{-\int_{0}^{s} \Psi(s', Y_{i}(t-s+s') +f_{0}(t-s+s')) ds'} ds\\
& \upsilon_{i}^{t,\infty} :=\int_{t}^{+\infty} \Psi (s,Y_{i}(t)+f_{0}(t)) u^{\rm in}(s-t) e^{ -\int_{0}^{t} \Psi(s-t+t', Y_{i}(t') +f_{0}(t')) dt'}ds.
\end{aligned}
\right.
\end{equation*}
and so $\tilde{u}(t)=A(t)+B(t)$ where $A(t):=  \upsilon_{1}^{0,t} - \upsilon_{2}^{0,t}$ and $B(t):= \upsilon_{1}^{t,\infty} - \upsilon_{2}^{t,\infty}$. Before studying the functions $A$ and $B$, let us remark that in order to prove \eqref{eq:F:contraction}, it suffices to prove that there exists a non-decreasing function $C$ (which depends neither on the initial condition nor on $f_{0}$) such that for all $T\geq 0$,
\begin{equation}\label{eq:control:A:B}
|| \tilde{u} ||_{L^{\infty}([0,T])} \leq C(T) || Y_{1}-Y_{2} ||_{L^{\infty}([0,T])}.
\end{equation}
Indeed, using the definition of $F_{T}$ given by \eqref{eq:def:FT}, one then deduces that for all $T\geq 0$,
\begin{equation*}
|| F_{T}(Y_{1}) - F_{T}(Y_{2}) ||_{L^{\infty}([0,T])} \leq C(T) || Y_{1}-Y_{2} ||_{L^{\infty}([0,T])} \int_{0}^{T} | h(z) | dz
\end{equation*}
and since $h$ is locally integrable and $C$ is non-decreasing, there exists $T>0$ small enough such that $C(T) \int_{0}^{T} | h(z) | dz\leq 1/2$ which ends the proof. To prove \eqref{eq:control:A:B}, let $t$ be a positive real number.
\paragraph{Study of $A$.}
We have $A=A_{1}+A_{2}+A_{3}$ with
\begin{equation*}
\begin{cases}
A_{1}(t):= \int_{0}^{t} [\Psi (s,Y_{1}(t)+f_{0}(t))- \Psi (s,Y_{2}(t)+f_{0}(t))] u_{1}(t-s,0)
 e^{ -\int_{0}^{s} \Psi(s', Y_{1}+f_{0}) ds' } ds \\
A_{2}(t):= \int_{0}^{t} \Psi (s,Y_{2}(t)+f_{0}(t)) [u_{1}(t-s,0) - u_{2}(t-s,0)]  e^{ -\int_{0}^{s} \Psi(s', Y_{1} +f_{0}) ds' } ds \\
A_{3}(t):= \int_{0}^{t} \Psi (s,Y_{2}(t)+f_{0}(t))  u_{2}(t-s,0) [e^{ -\int_{0}^{s} \Psi(s', Y_{1} +f_{0}) ds'} - e^{ -\int_{0}^{s} \Psi(s', Y_{2} +f_{0}) ds'}]  ds.
\end{cases}
\end{equation*}
where the arguments ``$t-s+s'$'' in the exponentials are not written for simplicity.

- \underline{Study of $A_{1}$.}
Using the Lipschitz continuity of $\Psi$, the fact that the argument in the exponential is non-positive and the a priori bound on $u_{1}$, we have
\begin{multline}\label{eq:control:A1}
|A_{1}(t)|\leq {\rm Lip}(\Psi) \int_{0}^{t} u_{1}(t-s,0) |Y_{1}(t) -Y_{2}(t) | ds \\
\leq {\rm Lip}(\Psi) \max(M,|| \Psi ||_{\infty}) \, t\,  || Y_{1}-Y_{2} ||_{L^{\infty}([0,t])}.
\end{multline}

- \underline{Study of $A_{2}$.}
Using the boundedness of $\Psi$ and the fact that the argument in the exponential is non-positive, we have
\begin{equation}\label{eq:control:A2}
|A_{2}(t)|\leq || \Psi ||_{\infty} \int_{0}^{t} |u_{1}(z,0) - u_{2}(z,0)|dz = || \Psi ||_{\infty} \int_{0}^{t} |\tilde{u}(z,0)|dz.
\end{equation}

- \underline{Study of $A_{3}$.}
The arguments of the exponentials are non-positive and the exponential function is Lipschitz with constant 1 on $\mathbb{R}_{-}$ so, using the a priori bound on $u_{2}$, we have
\begin{eqnarray}
|A_{3}(t)| &\leq & || \Psi ||_{\infty} || u_{2} ||_{\infty} \! \int_{0}^{t} \left| \int_{0}^{s} \Psi(s', Y_{1} +f_{0}) ds' - \int_{0}^{s} \Psi(s', Y_{2} +f_{0}) ds' \right|  ds \nonumber\\
  & \leq & || \Psi ||_{\infty} \max(M,|| \Psi ||_{\infty}) {\rm Lip}(\Psi) \, t^{2}\,  || Y_{1}-Y_{2} ||_{L^{\infty}([0,t])}. \label{eq:control:A3}
\end{eqnarray}
where the arguments ``$t-s+s'$'' are not written in the first equation for simplicity.

\paragraph{Study of $B$.}
We have $B=B_{1}+B_{2}$ with
\begin{equation*}
\begin{cases}
&B_{1}(t):= \int_{t}^{+\infty} [\Psi (s,Y_{1}(t)+f_{0}(t))-\Psi (s,Y_{2}(t)+f_{0}(t))] \\
&\hphantom{B_{1}(t):= \int_{t}^{+\infty} [f}  u^{\rm in}(s-t) \exp\left( -\int_{0}^{t} \Psi(s-t+t', Y_{1}(t') +f_{0}(t')) dt'\right)ds\\
&B_{2}(t):= \int_{t}^{+\infty} \Psi (s,Y_{2}(t)+f_{0}(t)) u^{\rm in}(s-t) \\
&\hphantom{B_{2}(t):= \int_{t}^{+\infty} f} [e^{-\int_{0}^{t} \Psi(s-t+t', Y_{1}(t') +f_{0}(t')) dt'}- e^{-\int_{0}^{t} \Psi(s-t+t', Y_{2}(t') +f_{0}(t')) dt'}]ds.
\end{cases}
\end{equation*}

- \underline{Study of $B_{1}$.}
Using the Lipschitz continuity of $\Psi$ and the fact that the argument in the exponential is non-positive, we have
\begin{equation}\label{eq:control:B1}
|B_{1}(t)|  \leq  {\rm Lip}(\Psi) \int_{t}^{+\infty} |Y_{1}(t) - Y_{2}(t)| u^{\rm in}(s-t)ds \leq {\rm Lip}(\Psi) || Y_{1}-Y_{2} ||_{L^{\infty}([0,t])},
\end{equation}
where we used that $\int_{0}^{+\infty} u^{\rm in}(s)ds =1$.

- \underline{Study of $B_{2}$.}
As for $A_{3}$, we have
\begin{equation}\label{eq:control:B2}
|B_{2}(t)| \leq || \Psi ||_{\infty} {\rm Lip}(\Psi) \, t\, || Y_{1}-Y_{2} ||_{L^{\infty}([0,t])}, 
\end{equation}
where we used once again that $\int_{0}^{+\infty} u^{\rm in}(s)ds =1$.

Gathering \eqref{eq:control:A1}, \eqref{eq:control:A2}, \eqref{eq:control:A3}, \eqref{eq:control:B1} and \eqref{eq:control:B2}, we get that there exists a non-decreasing function $g$ such that for all $t>0$,
\begin{equation*}
| \tilde{u}(t) |\leq g(t) || Y_{1}-Y_{2} ||_{L^{\infty}([0,t])}+ || \Psi ||_{\infty} \int_{0}^{t} |\tilde{u}(z)|dz.
\end{equation*}
Then, Lemma \ref{lem:Gronwall:Picard} gives that for all $T\geq 0$,
\begin{equation*}
|| \tilde{u} ||_{L^{\infty}([0,T])} \leq C_{T} \sup_{t\in [0,T]} \left\{ g(t) || Y_{1}-Y_{2} ||_{L^{\infty}([0,t])} \right\},
\end{equation*}
with $C_{T}$ being a non-decreasing function of $T$. Then, \eqref{eq:control:A:B} follows since $g$ is non-decreasing.

\subsection{Proof of Proposition \ref{prop:limit:equation:well:posed:delta=0}}
\label{sec:proof:prop:limit:equation}
\ \nsp\nsp

-$(i)$ Suppose that $(\overline{N}_t)_{t\geq 0}$ is a solution of \eqref{eq:limit:equation:counting:process}. Then, the mean cumulative intensity $\overline{\Lambda}(t)=\mathbb{E}[\overline{N}_t]$ is a non-decreasing locally bounded function satisfying
\begin{equation} \label{eq:solution:expectation}
\overline{\Lambda}(t) = \int_0^t \Psi_{0}\left( \int_0^{t'} h(t'-z) d\overline{\Lambda}_{z} + f_{0}(t')\right)dt'\;\;\text{for every}\;\;t\geq 0.
\end{equation}
By Lemma \ref{lem:solution:expectation:uniqueness}, we know that~\eqref{eq:solution:expectation} admits a unique solution which is furthermore of class $\mathcal{C}^{1}$ and we denote $\overline{\lambda}$ its derivative. Thus, we have $\mathbb{E}\left[ \overline{N}_{+}(dt) \right]=\overline{\Lambda}'(t)dt=\overline{\lambda}(t)dt$. 

\medskip
-$(ii)$ The first equation of \eqref{eq:limit:equation:counting:process:rewritten:delta=0} is a classical thinning equation so its solution $(\overline{N}_{t})_{t\geq 0}$ is a measurable function of $\Pi$ hence it is unique (once $\Pi$ is fixed).

To conclude  this step, it suffices to check that $(\overline{N}_{t})_{t\geq 0}$ satisfies the second equation of \eqref{eq:limit:equation:counting:process:rewritten:delta=0} where we remind that $\overline{\lambda}$ is the derivative of $\overline{\Lambda}$ which is the unique solution of \eqref{eq:solution:expectation}. But $\mathbb{E}[\overline{N}_t]=\int_{0}^{t}  \Psi_{0}(\int_0^{t'} h(t'-z) \overline{\lambda}(z) dz +f_{0}(t'))dt'$, which is equal to $\overline{\Lambda}(t)=\int_{0}^{t} \overline{\lambda}(t') dt'$ since $\overline{\Lambda}$ is a solution of \eqref{eq:solution:expectation}.\\

Finally, the two remaining points are rather simple. 
First, taking the derivative of \eqref{eq:solution:expectation} gives that $\overline{\lambda}(t)=\Psi_{0}\left(  \int_{0}^{t-} h(t-z) \overline{\lambda}(z)dz + f_{0}(t) \right) $.
Secondly, the solution of \eqref{eq:limit:equation:counting:process:rewritten:delta=0} is clearly a solution of \eqref{eq:limit:equation:counting:process} and $(i)$ tells that a solution of \eqref{eq:limit:equation:counting:process} is necessarily a solution of \eqref{eq:limit:equation:counting:process:rewritten:delta=0} which gives uniqueness.

\subsection{Proof of \eqref{eq:bound:delta:both:cases}}
\label{sec:proof:thm:counting:process:convergence}

For simplicity of notations in \eqref{eq:definition:coupling:ADRHP} and \eqref{eq:definition:coupling:limit:equation}, let us denote for all $t\geq 0$,
\begin{equation}\label{eq:def:gamma}
\left\{
\begin{aligned}
&\gamma^{n,i}_{t}:= \frac{1}{n}  \sum_{j=1}^n \left( \int_{0}^{t-} H_{ij}(t-z)N^{n,j}_{+}(dz) + F_{ij}(t) \right) , \\
&\overline{\gamma}(t) :=  \int_0^{t} m_{\mu_{H}}(t-z)\overline{\lambda}(z) dz +m_{\nu_{F}}(t),
\end{aligned}
\right.
\end{equation}
where $\overline{\lambda}$ is defined either in Proposition \ref{prop:limit:equation:well:posed:bounded:intensity} (under \hyperref[ass:H1]{$\mathcal{H}_{1}$}) or \ref{prop:limit:equation:well:posed:delta=0} (under \hyperref[ass:H2]{$\mathcal{H}_{2}$}). We have $\lambda^{n,i}_{t}=\Psi(S^{n,i}_{t-},\gamma^{n,i}_{t})$ and $ \overline{\lambda}^{i}_{t}=\Psi(\overline{S}^{i}_{t-},\overline{\gamma}(t))$. Notice that $\overline{\gamma}$ is a deterministic function (which does not depend on $i$) whereas $\gamma^{n,i}$ is random.

\paragraph{First point of \eqref{eq:bound:delta:both:cases}.} Assume that \hyperref[ass:H1]{$\mathcal{H}_{1}$} is satisfied. Then one can use the decomposition, 
\begin{equation}\label{eq:decomposition:delta:age:process:exact}
1=\mathds{1}_{\{S^{n,1}_{t-}=\overline{S}^{1}_{t-}\}} + \mathds{1}_{\{S^{n,1}_{t-}\neq \overline{S}^{1}_{t-}\}}
\end{equation}
and the fact that $| \lambda^{n,1}_{t} - \overline{\lambda}^{1}_{t} |\leq ||\Psi||_{\infty}$ to deduce from \eqref{eq:delta:n:intensity} that
\begin{equation}\label{eq:decomposition:delta:age:process}
\delta_{n}(\theta)\leq  \int_{0}^{\theta} \mathbb{E}\left[\Big| \lambda^{n,1}_{t} - \overline{\lambda}^{1}_{t} \Big| \mathds{1}_{\{S^{n,1}_{t-}=\overline{S}^{1}_{t-}\}}   \right] dt + || \Psi ||_{\infty} \int_{0}^{\theta} \mathbb{P}\left(S^{n,1}_{t-}\neq\overline{S}^{1}_{t-}  \right) dt.
\end{equation}
Let us denote 
$$
\left\{
\begin{aligned}
&A^{n}(\theta):=\int_{0}^{\theta} \mathbb{E}\left[\Big| \lambda^{n,1}_{t} - \overline{\lambda}^{1}_{t} \Big| \mathds{1}_{\{S^{n,1}_{t-}=\overline{S}^{1}_{t-}\}}   \right] dt, \\
&D^{n}(\theta):=\int_{0}^{\theta} \mathbb{P}\left(S^{n,1}_{t-}\neq\overline{S}^{1}_{t-}  \right) dt.
\end{aligned}
\right.
$$
\subparagraph{Study of $A^{n}(\theta)$.}
Using the Lipschitz continuity of $\Psi$, it is clear that for all $t$ in $[0,\theta]$, if $S^{n,1}_{t-}=\overline{S}^{1}_{t-}$, then
$
| \lambda^{n,1}_{t} - \overline{\lambda}^{1}_{t} | \leq {\rm Lip}(\Psi) | \gamma^{n,1}_{t} - \overline{\gamma}(t) |.
$
So, one deduces that
\begin{equation}\label{eq:decomposition:delta:ABC}
A^{n}(\theta) \leq {\rm Lip}(\Psi)(B^{n}_{1}(\theta)+B^{n}_{2}(\theta)+B^{n}_{3}(\theta)+C^{n}(\theta)),
\end{equation}
where
\begin{equation}\label{eq:definition:ABC}
\!\!\!\!\!\!\!\!\!\!\!\!\!\!\!\!
\left\{ 
\begin{array}{l}
\displaystyle B^{n}_{1}(\theta):=\int_{0}^{\theta} \mathbb{E}\Big[\Big|\int_0^{t} \frac{1}{n}\sum_{j=1}^n H_{1j}(t-z)[N^{n,j}_{+}(dz) - \overline{N}^{j}_{+}(dz)]\Big| \Big]  dt,\\
\displaystyle B^{n}_{2}(\theta):= \int_{0}^{\theta} \mathbb{E}\Big[\Big| \int_0^{t} \frac{1}{n}\sum_{j=1}^n H_{1j}(t-z) [\overline{N}^{j}_{+}(dz) - \overline{\lambda}^{j}_{z} dz]\Big| \Big]  dt,\\
\displaystyle B^{n}_{3}(\theta):= \int_{0}^{\theta} \mathbb{E}\Big[\Big| \int_0^{t} \frac{1}{n}\sum_{j=1}^n \left( H_{1j}(t-z) \overline{\lambda}^{j}_{z} - m_{\mu_{H}}(t-z)\overline{\lambda}(z) \right) dz\Big| \Big]  dt,\\
\displaystyle C^{n}(\theta):= \int_{0}^{\theta} \mathbb{E}\Big[\Big| \frac{1}{n}\sum_{j=1}^n F_{1j}(t) - m_{\nu_{F}}(t)\Big|  \Big]  dt.
\end{array}
\right.
\end{equation}

- \underline{Study of $B^{n}_{1}$.} Firstly, using (\hyperref[ass:mu:H:infty]{$\mathcal{A}^{\mu_{H}}_{\infty}$}) and then Lemma \ref{lem:double:Fubini}, we have
\begin{eqnarray}
B^{n}_{1}(\theta) &\leq & \int_{0}^{\theta} \mathbb{E}\Big[\int_0^{t} \frac{1}{n}\sum_{j=1}^n \Big|H_{1j}(t-z)\Big| \Big|N^{n,j}_{+}(dz) - \overline{N}^{j}_{+}(dz)\Big| \Big]  dt \nonumber\\
& \leq & \int_{0}^{\theta} \mathbb{E}\Big[ \int_0^{t} \frac{1}{n}\sum_{j=1}^n M_{\mu_{H}}(t-z)  \Big|N^{n,j}_{+}(dz) - \overline{N}^{j}_{+}(dz)\Big| \Big]  dt \nonumber\\ 
& = & \int_{0}^{\theta} \frac{1}{n}\sum_{j=1}^n \mathbb{E}\Big[\int_0^{t} M_{\mu_{H}}(t-z) d\Delta^{j}_{n}(z)\Big] dt \nonumber\\
& = & \int_{0}^{\theta} M_{\mu_{H}}(\theta-z) \delta_n(z) dz, \label{eq:bound:gronwall}
\end{eqnarray}
where the $\Delta^{j}_{n}$'s are given in \eqref{eq:def:Delta:i}.

- \underline{Study of $B^{n}_{2}$.} Secondly, 
using Cauchy-Schwartz inequality, we have
\begin{eqnarray}
B^{n}_{2}(\theta) & \leq &  \frac{1}{n} \int_{0}^{\theta} \mathbb{E}\Big[ \Big| \sum_{j=1}^n \int_0^{t} H_{1j}(t-z) [\overline{N}^{j}_{+}(dz) - \overline{\lambda}^{j}_{z} dz ] \Big|^{2} \Big]^{1/2} dt \nonumber \\
& = & \frac{1}{n} \int_{0}^{\theta} \mathbb{E}\Big[ \sum_{j=1}^n \int_0^{t} H_{1j}(t-z)^{2} \overline{\lambda}^{j}_{z} dz \Big]^{1/2} dt \nonumber \\
& \leq &  \frac{1}{n} \int_{0}^{\theta}  \Big(  n \int_0^{t} M_{\mu_{H}}(t-z)^{2}  || \Psi ||_{\infty} dz \Big)^{1/2} dt \nonumber \\
& = & \!\!\!\! \frac{1}{\sqrt{n}} || \Psi ||_{\infty}^{1/2} \!\! \int_{0}^{\theta} \!\!  \left(  \int_0^{t} M_{\mu_{H}}(t-z)^{2}  dz \right)^{1/2} \!\!\!\! dt:=\frac{1}{\sqrt{n}} \tilde{B}_{2}(\theta), \label{eq:bound:variance:mean:field:process}
\end{eqnarray}
by computing the bracket of a sum over a compensated point process (see \cite[Proposition II.4.1.]{gill_1997}), using (\hyperref[ass:mu:H:infty]{$\mathcal{A}^{\mu_{H}}_{\infty}$}) and the fact that $\overline{\lambda}^{j}$ is bounded by $|| \Psi ||_{\infty}$.

- \underline{Study of $B^{n}_{3}$.} Let us fix some $t$ in $[0,\theta]$ and $z$ in $[0,t]$ and denote $Y_{j}:=H_{1j}(t-z)\overline{\lambda}^{j}_{z}$. Since $\overline{\lambda}^{j}_{z}$ is the intensity of a solution of the limit equation, it is independent of $H_{1j}$. Moreover the $H_{1j}$'s are independent (see \eqref{eq:indep:Hij}) and the $\overline{\lambda}^{j}$'s are independent so the $Y_{j}$'s are independent. Hence,
\begin{equation*}
\mathbb{E}\left[ Y_{j} \right] = \mathbb{E}\left[ H_{1j}(t-z) \right] \mathbb{E}\left[ \overline{\lambda}^{j}_{z} \right] = m_{\mu_{H}}(t-z)\overline{\lambda}(z),
\end{equation*}
and
\begin{equation*}
\mathbb{V}\text{ar} \left( Y_{j} \right) = \mathbb{V}\text{ar} \left( H_{1j}(t-z) \right)\mathbb{V}\text{ar} \left( \overline{\lambda}^{j}_{z}  \right) + \mathbb{V}\text{ar} \left( H_{1j}(t-z) \right)\overline{\lambda}(z)^{2} 
+m_{\mu_{H}}(t-z)^{2}\mathbb{V}\text{ar} \left( \overline{\lambda}^{j}_{z}  \right).
\end{equation*}
Then, it follows from (\hyperref[ass:mu:H:infty]{$\mathcal{A}^{\mu_{H}}_{\infty}$}) and (\hyperref[ass:Psi:uniformly:bounded]{$\mathcal{A}^{\Psi}_{\infty}$}) that $\mathbb{V}\text{ar} \left( Y_{j} \right) \leq  3 M_{\mu_{H}}(t-z)^{2}|| \Psi ||_{\infty}^{2}$
since $m_{\mu_{H}}$ is dominated by $M_{\mu_{H}}$ and $\overline{\lambda}$ is bounded by $|| \Psi ||_{\infty}$. So, from the definition of $B^{n}_{3}$, using Cauchy-Schwartz inequality and the fact that the $Y_{j}$'s are independent, one has
\begin{eqnarray}
B^{n}_{3}(\theta) &\leq &\int_{0}^{\theta} \int_{0}^{t} \left( \frac{3}{n} M_{\mu_{H}}(t-z)^{2}|| \Psi ||_{\infty}^{2} \right)^{1/2} dz dt \nonumber\\
&\leq &\frac{\sqrt{3}}{\sqrt{n}} || \Psi ||_{\infty} \int_{0}^{\theta} \int_{0}^{t} M_{\mu_{H}}(t-z)  dz dt:= \frac{1}{\sqrt{n}} \tilde{B}_{3}(\theta) \label{eq:bound:product:variance}
\end{eqnarray}

- \underline{Study of $C^{n}$.} Since for all $t\geq 0$, $F_{11}(t),\dots ,F_{1n}(t)$ are i.i.d. random variables (see \eqref{eq:indep:Fij}) with expectation $m_{\nu_{F}}(t)$ and variance $V_{\nu_{F}}(t)$, one deduces from Cauchy-Schwartz inequality that
\begin{equation}\label{eq:bound:F0:variance}
C^{n}(\theta) \leq \frac{1}{\sqrt{n}} \int_{0}^{\theta} V_{\nu_{F}}(t)^{1/2} dt := \frac{1}{\sqrt{n}} \tilde{C}(\theta).
\end{equation}

Finally, one deduces from \eqref{eq:decomposition:delta:ABC}, \eqref{eq:bound:gronwall}, \eqref{eq:bound:variance:mean:field:process}, \eqref{eq:bound:product:variance} and \eqref{eq:bound:F0:variance} that
\begin{equation}\label{eq:gamma:n:final:bound}
A^{n}(\theta)\leq \frac{{\rm Lip}(\Psi)}{\sqrt{n}} \left( \tilde{B}_{2}(\theta) + \tilde{B}_{3}(\theta) + \tilde{C}(\theta) \right) + {\rm Lip}(\Psi) \int_{0}^{\theta} M_{\mu_{H}}(\theta-z) \delta_n(z) dz.
\end{equation}

\subparagraph{Study of $D^{n}(\theta)$.}
Since the initial conditions of $N^{n,1}$ and $\overline{N}^{1}$ are the same (equal to $N^{1}_{-}$), it holds that $S^{n,1}_{0}=\overline{S}^{1}_{0}$ a.s. If, at a fixed time $t\geq 0$, $S^{n,1}_{t-}\neq \overline{S}^{1}_{t-}$, then there is at least one point between $0$ and $t$ which is not common to both $N^{n,1}_{+}$ and $\overline{N}^{1}_{+}$, that is $\sup_{t'\in [0,t]} |N^{n,i}_{t'} - \overline{N}^{i}_{t'}| \neq 0$, hence
$
\mathbb{P}( S^{n,1}_{t-}\neq \overline{S}^{1}_{t-} )\leq \mathbb{P}( \sup_{t'\in [0,t]} |N^{n,i}_{t'} - \overline{N}^{i}_{t'}| \neq 0 ).
$
Moreover, since counting processes are piecewise constant with jumps of height $1$ a.s., it is clear that
\begin{equation*}
\mathbb{P}\Big( \sup_{t'\in [0,t]} |N^{n,1}_{t'} - \overline{N}^{1}_{t'}| \neq 0 \Big) \leq \mathbb{E}\Big[ \sup_{t'\in [0,t]} | N^{n,1}_{t'} -\overline{N}^1_{t'} | \Big],
\end{equation*}
where we used Markov's inequality. Using \eqref{eq:bound:L1:coupling}, one has $\mathbb{P}\left( S^{n,1}_{t-}\neq \overline{S}^{1}_{t-} \right) \leq \delta_{n}(t)$ and so
\begin{equation}\label{eq:bound:age:different:term}
D^{n}(\theta) \leq \int_{0}^{\theta} \delta_{n}(t) dt.
\end{equation}\\

Rewriting \eqref{eq:decomposition:delta:age:process} under the form
$
\delta_{n}(\theta)\leq A^{n}(\theta) + || \Psi ||_{\infty} D^{n}(\theta)
$,
one deduces from \eqref{eq:gamma:n:final:bound} and \eqref{eq:bound:age:different:term} that
\begin{equation}\label{eq:bound:final:delta:>0}
\delta_{n}(\theta)\leq \frac{{\rm Lip}(\Psi)}{\sqrt{n}} \left( \tilde{B}_{2}(\theta) + \tilde{B}_{3}(\theta) + \tilde{C}(\theta) \right) 
+  \int_{0}^{\theta} \left[ || \Psi ||_{\infty} + {\rm Lip}(\Psi) M_{\mu_{H}}(\theta-z)\right] \delta_n(z) dz
\end{equation}
where $\tilde{B}_{2}$, $\tilde{B}_{3}$, $\tilde{C}$ are respectively defined in \eqref{eq:bound:variance:mean:field:process}, \eqref{eq:bound:product:variance} and \eqref{eq:bound:F0:variance}. 
Since $M_{\mu_{H}}$ is locally square integrable,  $\theta\mapsto \tilde{B}_{2}(\theta)$ is locally bounded; since $M_{\mu_{H}}$ is locally integrable,  $\theta\mapsto \tilde{B}_{3}(\theta)$ is locally bounded; since $V_{\nu_{F}}$ is locally square root integrable,  $\theta\mapsto \tilde{C}(\theta)$ is locally bounded. Hence we proved the first point of \eqref{eq:bound:delta:both:cases}.

\paragraph{Second point of \eqref{eq:bound:delta:both:cases}.} Assume that \hyperref[ass:H2]{$\mathcal{H}_{2}$} is satisfied. Then the decomposition \eqref{eq:decomposition:delta:age:process:exact} is not helpful anymore. Whatever the age processes are, one always has $ \lambda^{n,i}_{t} =  \Psi_{0}(\gamma^{n,i}_{t})$ and 
$\overline{\lambda}^{i}_{t} = \Psi_{0}(\overline{\gamma}(t))$ for all $i=1,\dots ,n$ and $t> 0$. Remark that in this case, the intensities $\overline{\lambda}^{i}$ of the limit processes defined by \eqref{eq:definition:coupling:limit:equation} are deterministic and equal to $\overline{\lambda}$ defined in Proposition \ref{prop:limit:equation:well:posed:delta=0}. Instead of \eqref{eq:decomposition:delta:age:process} one should start from
$
\delta_{n}(\theta) =  \int_{0}^{\theta} \mathbb{E}[| \lambda^{n,1}_{t} - \overline{\lambda}^{1}_{t} |  ] dt .
$
One can prove in the same way as above that
\begin{equation}\label{eq:decomposition:delta:ABC:delta=0}
\delta_{n}(\theta) \leq {\rm Lip}(\Psi_{0}) (B^{n}_{1}(\theta)+B^{n}_{2}(\theta)+B^{n}_{3}(\theta)+C^{n}(\theta)),
\end{equation}
where $B^{n}_{1}$, $B^{n}_{2}$, $B^{n}_{3}$ and $C^{n}$ are defined by \eqref{eq:definition:ABC}. Since the uniform boundedness of $\Psi$ was not used in the study of $B^{n}_{1}$ and $C^{n}$ then \eqref{eq:bound:gronwall} and \eqref{eq:bound:F0:variance} still hold. It remains to control $B^{n}_{2}$ and $B^{n}_{3}$ under Assumption (\hyperref[ass:Psi:=Psi0]{$\mathcal{A}_{\Psi=\Psi_{0}}$}).

- \underline{Study of $B^{n}_{2}$.} Firstly, remind that for all $j$, $\overline{\lambda}^{j}_{t}=\overline{\lambda}(t)$ so,
using Cauchy-Schwartz inequality, we have
\begin{eqnarray}
B^{n}_{2}(\theta) & \leq &  \frac{1}{n} \int_{0}^{\theta} \mathbb{E}\Big[ \Big| \sum_{j=1}^n \int_0^{t} H_{1j}(t-z) [\overline{N}^{j}_{+}(dz) - \overline{\lambda}(z) dz ] \Big|^{2} \Big]^{1/2} dt \nonumber \\
& = & \frac{1}{n} \int_{0}^{\theta} \mathbb{E}\Big[ \sum_{j=1}^n \int_0^{t} H_{1j}(t-z)^{2} \overline{\lambda}(z) dz \Big]^{1/2} dt \nonumber \\
& \leq &  \frac{1}{\sqrt{n}} \int_{0}^{\theta}  \left(  \int_0^{t} M_{\mu_{H}}(t-z)^{2}  \overline{\lambda}(z) dz \right)^{1/2} dt:=\frac{1}{\sqrt{n}} \overline{B}_{2}(\theta), \label{eq:bound:variance:mean:field:process:delta=0}
\end{eqnarray}
by computing the bracket of a sum over a compensated point process (see \cite[Proposition II.4.1.]{gill_1997}) and then using (\hyperref[ass:mu:H:infty]{$\mathcal{A}^{\mu_{H}}_{\infty}$}). 

- \underline{Study of $B^{n}_{3}$.} Secondly, since $\overline{\lambda}^{j}_{t}=\overline{\lambda}(t)$ for all $j$, then $B^{n}_{3}$ rewrites as 
\begin{multline*}
B^{n}_{3}(\theta)=\int_{0}^{\theta} \mathbb{E}\Big[\Big| \int_0^{t} \frac{1}{n}\sum_{j=1}^n \left( H_{1j}(t-z)  - m_{\mu_{H}}(t-z) \right)\overline{\lambda}(z) dz\Big| \Big]  dt \\ 
\leq \int_{0}^{\theta}  \int_0^{t} \mathbb{E}\Big[\Big| \frac{1}{n}\sum_{j=1}^n \left( H_{1j}(t-z)  - m_{\mu_{H}}(t-z) \right) \Big|\Big]\overline{\lambda}(z) dz   dt.
\end{multline*}
Using Cauchy-Schwartz inequality, the fact that the $H_{1j}$'s are i.i.d. with mean function $m_{\mu_{H}}$ and that  for all $s\geq 0$, $\mathbb{V}\text{ar}(H_{1j}(s))\leq M_{\mu_{H}}(s)^{2}$ (which follows from (\hyperref[ass:mu:H:infty]{$\mathcal{A}^{\mu_{H}}_{\infty}$})), we have
\begin{equation}\label{eq:bound:product:variance:delta=0}
B^{n}_{3}(\theta)\leq \frac{1}{\sqrt{n}} \int_{0}^{\theta}  \int_0^{t}  M_{\mu_{H}}(t-z) \overline{\lambda}(z) dz dt := \frac{1}{\sqrt{n}} \overline{B}_{3}(\theta).
\end{equation}

Finally, one deduces from \eqref{eq:decomposition:delta:ABC:delta=0}, \eqref{eq:bound:gronwall}, \eqref{eq:bound:variance:mean:field:process:delta=0}, \eqref{eq:bound:product:variance:delta=0} and \eqref{eq:bound:F0:variance} that
\begin{equation}\label{eq:bound:final:delta:=0}
\delta_{n}(\theta)\leq \frac{{\rm Lip}(\Psi_{0})}{\sqrt{n}} \left( \overline{B}_{2}(\theta) + \overline{B}_{3}(\theta) + \tilde{C}(\theta) \right) +  \int_{0}^{\theta}  {\rm Lip}(\Psi_{0}) M_{\mu_{H}}(\theta-z) \delta_n(z) dz,
\end{equation}
where $\overline{B}_{2}$, $\overline{B}_{3}$, $\tilde{C}$ are respectively defined in \eqref{eq:bound:variance:mean:field:process:delta=0}, \eqref{eq:bound:product:variance:delta=0} and \eqref{eq:bound:F0:variance}. 

Since $M_{\mu_{H}}$ is locally square integrable and $\overline{\lambda}$ is continuous, $\theta\mapsto \overline{B}_{2}(\theta)$ is locally bounded; since $M_{\mu_{H}}$ is locally integrable and $\overline{\lambda}$ is continuous, $\theta\mapsto \overline{B}_{3}(\theta)$ is locally bounded; since $V_{\nu_{F}}$ is locally square root integrable, $\theta\mapsto \tilde{C}(\theta)$ is locally bounded. Hence we proved the second point of \eqref{eq:bound:delta:both:cases}.

\subsection{Proof of Proposition \ref{prop:linear:bound}}
\label{sec:proof:prop:linear:bound}
\ \nsp\nsp

-1. Assume that \hyperref[ass:H1]{$\mathcal{H}_{1}$} is satisfied. The mean intensity $\overline{\lambda}$ defined in Proposition \ref{prop:limit:equation:well:posed:bounded:intensity} is clearly uniformly bounded by $|| \Psi ||_{\infty}$.

Looking at \eqref{eq:bound:final:delta:>0}, one wants to find some uniform bounds on $\tilde{B}_{2}$, $\tilde{B}_{3}$, $\tilde{C}$ respectively defined in \eqref{eq:bound:variance:mean:field:process}, \eqref{eq:bound:product:variance} and \eqref{eq:bound:F0:variance}.
Firstly, since $M_{\mu_{H}}$ is square integrable and $\overline{\lambda}$ is uniformly bounded, it is clear from~\eqref{eq:bound:variance:mean:field:process} that 
\begin{equation}\label{eq:bound:variance:mean:field:process:enhanced:delta>0}
\tilde{B}_{2}(\theta)\leq || M_{\mu_{H}} ||_{2} || \Psi ||_{\infty}^{1/2} \theta.
\end{equation}
Secondly, using the integrability of $M_{\mu_{H}}$ and the boundedness of $\overline{\lambda}$, one deduces from~\eqref{eq:bound:product:variance} that
\begin{equation}\label{eq:bound:product:variance:enhanced:delta>0}
\tilde{B}_{3}(\theta)\leq \sqrt{3} || M_{\mu_{H}} ||_{1} || \Psi ||_{\infty}\theta.
\end{equation}
Finally, since $V_{\nu_{F}}$ is uniformly bounded, one deduces from \eqref{eq:bound:F0:variance} that
\begin{equation}\label{eq:bound:F0:variance:enhanced:delta>0}
\tilde{C}(\theta)\leq || V_{\nu_{F}} ||_{\infty}^{1/2} \theta.
\end{equation}
Using the fact that $\delta_{n}$ is a non-decreasing function, we find (with $\alpha={\rm Lip}(\Psi) || M_{\mu_{H}} ||_{1}$)
\begin{equation}\label{eq:bound:gronwall:enhanced:delta>0}
\begin{cases}
\int_{0}^{\theta}  {\rm Lip}(\Psi) M_{\mu_{H}}(\theta-z) \delta_n(z) dz \leq \alpha \delta_{n}(\theta)\\
\int_{0}^{\theta}  || \Psi ||_{\infty} \delta_n(z) dz \leq || \Psi ||_{\infty} \theta \delta_{n}(\theta).
\end{cases}
\end{equation}
Gathering \eqref{eq:bound:variance:mean:field:process:enhanced:delta>0}, \eqref{eq:bound:product:variance:enhanced:delta>0}, \eqref{eq:bound:F0:variance:enhanced:delta>0} and \eqref{eq:bound:gronwall:enhanced:delta>0} one deduces from \eqref{eq:bound:final:delta:>0} that $\delta_{n}(\theta)\leq \beta(\Psi,\mu_{H},\nu_{F}) \theta n^{-1/2}$ with
\begin{equation}\label{eq:explicit:bound:beta:delta>0}
\beta(\Psi,\mu_{H},\nu_{F}) := \frac{{\rm Lip}(\Psi)}{1-(\alpha+|| \Psi ||_{\infty} \theta)} \left( || M_{\mu_{H}} ||_{2} || \Psi ||_{\infty}^{1/2}  \right. \\
\left. + \sqrt{3} || M_{\mu_{H}} ||_{1} || \Psi ||_{\infty} + || V_{\nu_{F}} ||_{\infty}^{1/2} \right),
\end{equation}
as soon as $\alpha + || \Psi ||_{\infty} \theta<1$, i.e.  $\theta < (1-\alpha)/|| \Psi ||_{\infty}$. 

\medskip
-2. Assume \hyperref[ass:H2]{$\mathcal{H}_{2}$} is satisfied. Under the assumptions of Proposition \ref{prop:linear:bound}, the mean intensity $\overline{\lambda}$ defined in Proposition \ref{prop:limit:equation:well:posed:delta=0} is uniformly bounded thanks to Lemma \ref{lem:boundedness:mean:intensity}.

Looking at \eqref{eq:bound:final:delta:=0}, one wants to find some uniform bounds on $\overline{B}_{2}$, $\overline{B}_{3}$, $\tilde{C}$ respectively defined in \eqref{eq:bound:variance:mean:field:process:delta=0}, \eqref{eq:bound:product:variance:delta=0} and \eqref{eq:bound:F0:variance}. In the same way as above, one can deduce from \eqref{eq:bound:variance:mean:field:process:delta=0}, \eqref{eq:bound:product:variance:delta=0} and \eqref{eq:bound:F0:variance} that 
\begin{equation}\label{eq:bound:enhanced}
\begin{cases}
\overline{B}_{2}(\theta)\leq || M_{\mu_{H}} ||_{2} || \overline{\lambda} ||_{\infty}^{1/2} \theta\\
\overline{B}_{3}(\theta)\leq || M_{\mu_{H}} ||_{1} || \overline{\lambda} ||_{\infty}\theta\\
\tilde{C}(\theta)\leq || V_{\nu_{F}} ||_{\infty}^{1/2} \theta.
\end{cases}
\end{equation}
Using \eqref{eq:bound:enhanced} and the first equation of \eqref{eq:bound:gronwall:enhanced:delta>0} which is still valid is this case, one deduces from \eqref{eq:bound:final:delta:=0} that
\begin{equation*}
\delta_{n}(\theta)\leq \frac{{\rm Lip}(\Psi)}{\sqrt{n}} \left( || M_{\mu_{H}} ||_{2} || \overline{\lambda} ||_{\infty}^{1/2}  +  || M_{\mu_{H}} ||_{1} || \overline{\lambda} ||_{\infty} + || V_{\nu_{F}} ||_{\infty}^{1/2} \right) \theta + \alpha\delta_{n}(\theta),
\end{equation*}
which leads to $\delta_{n}(\theta)\leq \beta(\Psi,\mu_{H},\nu_{F}) \theta n^{-1/2}$ with
\begin{equation}\label{eq:explicit:bound:beta}
\beta(\Psi,\mu_{H},\nu_{F}) := \frac{{\rm Lip}(\Psi)}{1-\alpha} \left( || M_{\mu_{H}} ||_{2} || \overline{\lambda} ||_{\infty}^{1/2}  +  || M_{\mu_{H}} ||_{1} || \overline{\lambda} ||_{\infty} + || V_{\nu_{F}} ||_{\infty}^{1/2} \right),
\end{equation}
for every $\theta\geq 0$. Notice that an explicit expression of $|| \overline{\lambda} ||_{\infty}$ with respect to $M_{\mu_{H}}$, $m_{\nu_{F}}$ and $\Psi$ can be obtained thanks to Lemma \ref{lem:boundedness:mean:intensity}.

\section{Lemmas}

\subsection{Point processes}
Here we collect some technical lemmas about point processes. 

The following lemma is used to show the well-posedness of the studied point processes.

\begin{lem}\label{lem:stochastic:domination}
Let $n\geq 1$ be an integer, let $(g_{i})_{i=1,\dots ,n}$ be a family of locally integrable functions, $(a_{i})_{i=1,\dots ,n}$ be a family of non-negative real numbers and $h:\mathbb{R}_{+}\to \mathbb{R}$ be a locally integrable function. Let $(\Pi^{i}(dt,dx))_{i\geq 1}$ be some i.i.d. $\mathbb{F}$-Poisson measures with intensity $1$ on $\mathbb{R}_{+}^{2}$.

Let $(N_t^i)^{i=1,..,n}_{t \geq 0}$ be a family of counting processes such that for $i=1,..,n$ and all $t\geq 0$ ,
\begin{equation}\label{eq:Hawkes:Thinning:dominated}
N_t^i= \int_0^t \int_0^\infty \mathds{1}_{\displaystyle \{ x\leq \lambda^{i}_{t'} \}} \, \Pi^i(dt',dx),
\end{equation}
where the $\lambda^{i}$'s are $\mathbb{F}$-predictable processes such that 
$$\lambda^{i}_{t}\leq g_{i}(t) + a_{i} \frac{1}{n}\sum_{j=1}^{n} \int_{0}^{t-} |h(t-z)| N^{j}(dz).$$ 
Then, the linear multivariate Hawkes process $(\tilde{N}_t^i)^{i=1,..,n}_{t \geq 0}$ defined by
\begin{equation}\label{eq:Hawkes:Thinning:dominating}
\tilde{N}_t^i= \int_0^t \int_0^\infty \mathds{1}_{\displaystyle \Big\{ x\leq g_{i}(t') + a_{i} \frac{1}{n}\sum_{j=1}^{n} \int_{0}^{t'-} |h(t'-z)|\tilde{N}^{j}(dz) \Big\}} \, \Pi^i(dt',dx),
\end{equation}
is such that for all $i=1,\dots ,n$, $\tilde{N}^{i}$ stochastically dominates $N^{i}$ in the sense that $N^{i}\subset \tilde{N}^{i}$ where $N^{i}$ (resp. $\tilde{N}^{i}$) denotes the point process associated with the counting process $(N^{i}_{t})_{t\geq 0}$ (resp. $(\tilde{N}^{i}_{t})_{t\geq 0}$). In particular, the processes $(N_t^i)^{i=1,..,n}_{t \geq 0}$ are well-defined.
\end{lem}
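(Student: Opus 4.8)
The plan is to build the dominating process $(\tilde N^i)$ first, then couple the two systems on the same Poisson measures $\Pi^i$, and finally prove the inclusion $N^i\subset\tilde N^i$ for all $i$ by an induction over the successive jump times of the union of all the $\tilde N^j$'s. The existence and non-explosiveness of the linear multivariate Hawkes process $(\tilde N^i)$ defined by \eqref{eq:Hawkes:Thinning:dominating} is classical: it follows from the branching (immigration–birth) representation of linear Hawkes processes, see \cite{hawkes_1974}, since the $g_i$'s are locally integrable and $h$ is locally integrable. In particular $t\mapsto\mathbb{E}[\tilde N^i_t]$ is locally bounded. So it suffices to establish the pathwise domination $N^i\subset\tilde N^i$ a.s., which then automatically gives that $(N^i)$ is well-defined (no explosion) since it is contained in a non-explosive process.

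First I would argue pathwise on a fixed realization of the i.i.d.\ Poisson measures $(\Pi^i)$. Both systems \eqref{eq:Hawkes:Thinning:dominated} and \eqref{eq:Hawkes:Thinning:dominating} are driven by the same $\Pi^i$. Let $0<\tau_1<\tau_2<\dots$ denote the ordered atoms (projected on the time axis, over all indices $i$) of the points of $\bigcup_i\tilde N^i$ on $(0,+\infty)$; since $(\tilde N^i)$ is non-explosive, $\tau_k\to+\infty$ a.s. I claim, by induction on $k$, that for every $i$ and every $t\le\tau_k$ one has $N^i\cap(0,t]\subset\tilde N^i\cap(0,t]$. The base case $k=0$ (i.e. $t\le \tau_1$, before any point of $\bigcup\tilde N^j$) is immediate: on $(0,\tau_1)$ no $\tilde N^j$ has a point, so $\tilde N^i$ has intensity $g_i(t')$, while by the hypothesis on $\lambda^i$ and the fact that on that interval $\int_0^{t'-}|h(t'-z)|N^j(dz)$ is still $0$ as long as $N^j$ has no point — here one uses the induction to see $N^j$ also has no point before the first common jump — we get $\lambda^i_{t'}\le g_i(t')$ for $t'<\tau_1$, hence every atom $(t',x)$ of $\Pi^i$ accepted by $N^i$ (i.e. with $x\le\lambda^i_{t'}$) is also accepted by $\tilde N^i$ (i.e. $x\le g_i(t')$). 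So the first point of $N^i$, if any, is a point of $\tilde N^i$; but the first point of $\tilde N^i$ over all $i$ is $\tau_1$, so actually $N^i$ has no point strictly before $\tau_1$ either, and at $\tau_1$ itself the probability of an atom is null.

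For the induction step, suppose the inclusion holds up to $\tau_k$. On the interval $(\tau_k,\tau_{k+1})$ no new point of $\bigcup_j\tilde N^j$ appears, so for $t'\in(\tau_k,\tau_{k+1})$ the intensity of $\tilde N^i$ is $g_i(t')+a_i\frac1n\sum_j\int_0^{t'-}|h(t'-z)|\tilde N^j(dz)$ with the $\tilde N^j$ frozen at their state at $\tau_k$. By the induction hypothesis $N^j\cap(0,t'-]\subset\tilde N^j\cap(0,t'-]$ for each $j$, and since $|h|\ge0$ this yields
\begin{equation*}
\frac1n\sum_{j=1}^n\int_0^{t'-}|h(t'-z)|\,N^j(dz)\ \le\ \frac1n\sum_{j=1}^n\int_0^{t'-}|h(t'-z)|\,\tilde N^j(dz),
\end{equation*}
whence $\lambda^i_{t'}\le g_i(t')+a_i\frac1n\sum_j\int_0^{t'-}|h(t'-z)|\tilde N^j(dz)$, i.e. the acceptance region for $N^i$ is contained in that for $\tilde N^i$. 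Therefore any atom $(t',x)$ of $\Pi^i$ with $t'\in(\tau_k,\tau_{k+1})$ kept by $N^i$ is also kept by $\tilde N^i$; but $\tilde N^i$ has no point in $(\tau_k,\tau_{k+1})$ by definition of the $\tau$'s, so $N^i$ has none there either, and at $\tau_{k+1}$ the probability of an atom of $\Pi^i$ is null. Hence $N^i\cap(0,t]\subset\tilde N^i\cap(0,t]$ for all $t\le\tau_{k+1}$, completing the induction. Letting $k\to\infty$ and using $\tau_k\to+\infty$ gives $N^i\subset\tilde N^i$ a.s. for every $i$, and in particular $N^i_t\le\tilde N^i_t<+\infty$ for all $t$, so the $(N^i)$ are well-defined.

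\emph{Main obstacle.} The only genuinely delicate point is the simultaneous (rather than index-by-index) nature of the induction: one cannot compare $N^i$ with $\tilde N^i$ in isolation, because the intensity $\lambda^i$ depends on all the $N^j$'s through the mean-field sum, so the comparison at index $i$ requires the inclusion $N^j\subset\tilde N^j$ for \emph{all} $j$ up to the current time. Organizing the induction along the atoms of the union $\bigcup_j\tilde N^j$ (which are exactly the candidate times at which any $N^j$ could gain a point it does not already share) is what makes the argument go through cleanly; one also has to invoke at each step that, $N$ and $\tilde N$ admitting intensities, the probability that a fixed deterministic time $\tau_k$ carries an atom of $\Pi^i$ is zero, so nothing happens exactly at the boundary times.
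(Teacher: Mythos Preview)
Your proof is correct and follows essentially the same approach as the paper: build the linear dominating process via the branching representation, couple on the same Poisson measures, and prove the pathwise inclusion by induction over successive jump times. The paper inducts over the points of $N\cup\tilde N$ rather than over the points of $\bigcup_j\tilde N^j$ alone; this is a posteriori the same set (since one is proving $N\subset\tilde N$), but the paper's choice avoids the slight circularity in your induction step where you invoke ``$N^j\cap(0,t'-]\subset\tilde N^j\cap(0,t'-]$'' for $t'\in(\tau_k,\tau_{k+1})$ when the hypothesis only gives it up to $\tau_k$ --- to close that gap cleanly you should argue via the \emph{first} $N$-point after $\tau_k$, exactly as the paper does by making that point one of the $T_k$'s.

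One small correction: your remark that ``at $\tau_{k+1}$ the probability of an atom of $\Pi^i$ is null'' is not right (the $\tau_k$ are random, and by definition some $\Pi^{j}$ \emph{does} have an atom at $\tau_{k+1}$) and is in any case unnecessary. If $N^i$ has a point at $\tau_{k+1}$, the intensity comparison at $\tau_{k+1}-$ (which you have established) forces $\tilde N^i$ to accept the same atom, so the inclusion extends to $\tau_{k+1}$ automatically.
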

\begin{proof}
First, let us note that the processes $(\tilde{N}_t^i)^{i=1,..,n}_{t \geq 0}$ are well-defined by the Galton-Watson representation of the linear Hawkes process introduced in \cite{hawkes_1974} when the $g_{i}$'s are constant in time (see \cite[Proposition B.3]{chevallier2015microscopic} when the $g_{i}$'s are more generally locally integrable functions).

We are going to show by induction that 
$$\forall t\geq 0,\ \lambda^{i}_{t}\leq \tilde{\lambda}^{i}_{t}:=g_{i}(t) + a_{i} \frac{1}{n}\sum_{j=1}^{n} \int_{0}^{t-} |h(t-u)|\tilde{N}^{j}(du).$$ 
Indeed, for all time $t$ less than the first point of either $N:=\cup_{i=1}^{n} N^{i}$ or $\tilde{N}:=\cup_{i=1}^{n} \tilde{N}^{i}$, the respective intensities are such that $\lambda^{i}_{t} \leq g_{i}(t) = \tilde{\lambda}^{i}_{t}$.
Hence, the first point of $N\cup\tilde{N}$ (denoted by $T_{1}$) is a point of $\tilde{N}$ (and possibly a point of $N$). Let us denote $(T_{k})_{k\geq 1}$ the ordered sequence of the points of $N\cup\tilde{N}$.

Let us fix some $k_{0}\geq 1$. Suppose that for every $i=1,\dots ,n$, $\lambda^{i}_{t} \leq  \tilde{\lambda}^{i}_{t}$, for all $t\leq T_{k_{0}}$. Then, it is clear that for every $k=1,\dots ,k_{0}$, $T_{k}\in \tilde{N}$, hence for every $i=1,\dots ,n$, $\tilde{N}^{i}$ stochastically dominates $N^{i}$ up to time $T_{k_{0}+1-}$. Moreover, it implies that for every $i=1,\dots ,n$ and for all $t$ in $(T_{k_{0}},T_{k_{0}+1}]$,
\begin{eqnarray*}
\lambda^{i}_{t} &\leq &g_{i}(t) + a_{i} \frac{1}{n}\sum_{j=1}^{n} \int_{0}^{t-} |h(t-z)|N^{j}(dz)\\
 &\leq &g_{i}(t) + a_{i} \frac{1}{n}\sum_{j=1}^{n} \int_{0}^{t-} |h(t-z)|\tilde{N}^{j}(dz) = \tilde{\lambda}^{i}_{t},
\end{eqnarray*}
since $|h|$ is a non negative function.
Therefore, by induction on $k$, the desired stochastic domination holds true for all time. In particular, the dominated processes are well-defined.
\end{proof}

\begin{lem}\label{lem:continuity:age:process}
If $N$ admits the bounded $\mathbb{F}$-intensity $\lambda_{t}$ and $(S_{t-})_{t\geq 0}$ denote its associated predictable age process, then the distribution of $S_{t-}$ denoted by $w_{t}$ is such that $t\mapsto w_{t}$ belongs to $\mathcal{BC}(\mathbb{R}_{+},\mathcal{P}(\mathbb{R}_{+}))$.
\end{lem}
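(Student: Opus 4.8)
The plan is to prove directly that $t\mapsto w_{t}$ is Lipschitz continuous from $\mathbb{R}_{+}$ into $\mathcal{P}(\mathbb{R}_{+})$ equipped with the modified $1$-Wasserstein distance $\tilde{W}_{1}$ of \eqref{eq:def:modified:Wasserstein:distance}, which (by the Kantorovich--Rubinstein duality recalled just below that equation) induces the same topology as the bounded Lipschitz norm used in \cite{canizo2013measure} to define $\mathcal{BC}(\mathbb{R}_{+},\mathcal{P}(\mathbb{R}_{+}))$. Boundedness of the curve will be automatic since $\tilde{W}_{1}\le 1$ on $\mathcal{P}(\mathbb{R}_{+})$, so the only issue is continuity. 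The key point is to exploit the natural coupling: for $0\le s<t$ the pair $(S_{s-},S_{t-})$ has marginals $w_{s}$ and $w_{t}$, hence
\begin{equation*}
\tilde{W}_{1}(w_{s},w_{t})\le \mathbb{E}\big[\min(|S_{t-}-S_{s-}|,1)\big].
\end{equation*}

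The main step will then be to estimate this expectation using two elementary facts about the predictable age process. First, between two consecutive points of $N$ the predictable age grows with slope $1$: on the event $\{N\cap(s,t]=\emptyset\}$ one checks, using that $\mathbb{P}(s\in N)=0$ (which holds because $N$ admits an intensity), that almost surely the last point of $N$ strictly before $t$ coincides with the last point strictly before $s$, and therefore $S_{t-}=S_{s-}+(t-s)$. Second, on the complementary event one bounds $\min(|S_{t-}-S_{s-}|,1)\le 1$, and the probability of that event is controlled by the bounded intensity:
\begin{equation*}
\mathbb{P}\big(N\cap(s,t]\ne\emptyset\big)\le \mathbb{E}[N_{t}-N_{s}]=\mathbb{E}\Big[\int_{s}^{t}\lambda_{u}\,du\Big]\le \|\lambda\|_{\infty}(t-s),
\end{equation*}
where the first equality uses that boundedness of $\lambda$ promotes the local martingale $N_{\cdot}-\int_{0}^{\cdot}\lambda_{u}\,du$ to a true martingale. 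Splitting according to these two cases yields
\begin{equation*}
\tilde{W}_{1}(w_{s},w_{t})\le (t-s)\,\mathbb{P}\big(N\cap(s,t]=\emptyset\big)+\mathbb{P}\big(N\cap(s,t]\ne\emptyset\big)\le (1+\|\lambda\|_{\infty})\,(t-s).
\end{equation*}

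Consequently $t\mapsto w_{t}$ is $(1+\|\lambda\|_{\infty})$-Lipschitz, in particular continuous, and being bounded it belongs to $\mathcal{BC}(\mathbb{R}_{+},\mathcal{P}(\mathbb{R}_{+}))$, which is the claim. The only delicate points — and hence the closest thing to an obstacle — are the bookkeeping at the endpoint $t=0$, where $S_{0-}=-T_{0}$ is determined by $N\cap\mathbb{R}_{-}$ and one uses the standing convention $\mathbb{P}(0\in N)=0$ so that $S_{t-}=S_{0-}+t$ still holds on $\{N\cap(0,t]=\emptyset\}$, and the verification that the compensated process is a genuine martingale; both are routine once the intensity is known to be bounded.
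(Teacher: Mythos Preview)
Your proof is correct and follows essentially the same approach as the paper's: both use the natural coupling $(S_{s-},S_{t-})$, split according to whether $N$ has a point in the interval between $s$ and $t$, and control the probability of the latter event via the bounded intensity. Your version is slightly more explicit in that it extracts the Lipschitz constant $1+\|\lambda\|_{\infty}$, whereas the paper only records continuity.
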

\begin{proof}
This continuity result comes from the fact that the probability that $N$ has a point in an interval goes to $0$ as the size of the interval goes to $0$. Indeed, let $t,t'$ be positive real numbers, $\mathbb{P}\left( \overline{N}([t,t+t'))\neq 0 \right)\leq \mathbb{E}\left[ \overline{N}([t,t+t')) \right] = \mathbb{E}[ \int_{t}^{t+t'} \lambda_{z} dz ]$ goes to $0$ as $t'$ goes to $0$. Moreover, $S_{(t+t')-}=S_{t-}+t'$ as soon as there is no point of $N$ in the interval $[t,t+t')$ and so one has (reminding that $\tilde{W}_{1}$ denotes the modified Wasserstein distance defined in \eqref{eq:def:modified:Wasserstein:distance})
\begin{equation*}
\tilde{W}_{1}(w_{t+t'},w_{t})\leq \mathbb{E}\left[ \min\left( \left| S_{(t+t')-}-S_{t-} \right|, 1\right) \right]\leq t' + \mathbb{P}\left( \overline{N}([t,t+t'))\neq 0 \right),
\end{equation*}
which goes to $0$ as $t'$ goes to $0$. The same argument for $t'<0$ gives continuity.
\end{proof}

\subsection{Analytic lemmas}

Here, we collect some analytic lemmas regarding the convolution equations used throughout the present paper. First, here are two lemmas introduced in \cite{delattre2016}. 

The first one is an easy application of Fubini's Theorem (see \cite[Lemma 22]{delattre2016} for a proof).

\begin{lem}\label{lem:double:Fubini}Let $\Phi:\mathbb{R}_{+}\rightarrow\mathbb{R}$
be locally integrable and let $\alpha:\mathbb{R}_{+}\rightarrow\mathbb{R}$
have bounded variations on compact intervals, satisfying $\alpha\left(0\right)=0$.
Then, for all $t\geq0$,
$$
\int_{0}^{t}\int_{0}^{s}\Phi\left(s-u\right)d\alpha\left(u\right)ds=\int_{0}^{t}\Phi\left(t-s\right)\alpha\left(s\right)ds,
$$
where the integral has to be understood in the Stieltjes' sense.
\end{lem}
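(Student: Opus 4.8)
The plan is to show that both sides of the identity reduce to the single quantity $\int_{0}^{t} G(t-u)\,d\alpha(u)$, where $G(r):=\int_{0}^{r}\Phi(v)\,dv$ is the (locally absolutely continuous) primitive of $\Phi$ vanishing at $0$. Since both integrals involve only products of measurable functions and the measure $d\alpha$ has finite total variation $|d\alpha|([0,t])<\infty$ on every compact $[0,t]$, the whole argument is two applications of Fubini's theorem on the triangle $\{(s,u):0\le u\le s\le t\}$, together with an elementary change of variables; no Stieltjes integration-by-parts formula is needed.

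For the left-hand side I would first observe, via Tonelli's theorem, that $\int_{0}^{t}\int_{0}^{s}|\Phi(s-u)|\,|d\alpha|(u)\,ds\le \big(\int_{0}^{t}|\Phi(v)|\,dv\big)\,|d\alpha|([0,t])<\infty$, so that Fubini applies and the order of integration may be exchanged:
$$\int_{0}^{t}\int_{0}^{s}\Phi(s-u)\,d\alpha(u)\,ds=\int_{0}^{t}\Big(\int_{u}^{t}\Phi(s-u)\,ds\Big)d\alpha(u).$$
The substitution $v=s-u$ in the inner integral turns $\int_{u}^{t}\Phi(s-u)\,ds$ into $\int_{0}^{t-u}\Phi(v)\,dv=G(t-u)$, hence the left-hand side equals $\int_{0}^{t}G(t-u)\,d\alpha(u)$.

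For the right-hand side I would use the hypothesis $\alpha(0)=0$ to write $\alpha(s)=\int_{0}^{t}\mathds{1}_{\{u\le s\}}\,d\alpha(u)$ for every $s\in[0,t]$, substitute this into $\int_{0}^{t}\Phi(t-s)\alpha(s)\,ds$, and exchange the order of integration once more (justified exactly as above, by Tonelli). This yields $\int_{0}^{t}\big(\int_{u}^{t}\Phi(t-s)\,ds\big)d\alpha(u)$, and the substitution $w=t-s$ again produces $\int_{0}^{t-u}\Phi(w)\,dw=G(t-u)$ in the inner integral, so the right-hand side also equals $\int_{0}^{t}G(t-u)\,d\alpha(u)$. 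Comparing the two expressions finishes the proof.

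The only point that deserves a little care — and the closest thing to an obstacle — is the measure-theoretic bookkeeping near the endpoints when $\alpha$ has jumps: one must check that replacing $\alpha(s)$ by $d\alpha((0,s])$ and interchanging the integrals is legitimate. This is harmless because the set of discontinuities of $\alpha$ is countable, hence Lebesgue-null for the outer $ds$-integration, and $d\alpha$ is a genuine signed Radon measure of finite variation on $[0,t]$, so the two Tonelli/Fubini exchanges are valid verbatim.
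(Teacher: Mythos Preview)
Your proposal is correct and follows exactly the approach the paper indicates: the paper does not give its own proof but simply states that the lemma ``is an easy application of Fubini's Theorem'' and refers to \cite[Lemma~22]{delattre2016}. Your two Fubini exchanges on the triangle $\{0\le u\le s\le t\}$, reducing both sides to $\int_{0}^{t}G(t-u)\,d\alpha(u)$, are precisely what is meant, and your remark on endpoint jumps is the right level of care.
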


The second one is a rather classical generalization of Gr\"onwall Lemma (see \cite[Lemma 23]{delattre2016} for a proof).

\begin{lem}\label{lem:Gronwall:Picard}
Let $\Phi:\mathbb{R}_{+}\rightarrow\mathbb{R}_{+}$
be locally integrable and $g:\mathbb{R}_{+}\rightarrow\mathbb{R}_{+}$
be locally bounded.
Consider a locally bounded non-negative function $u$ such that for
all $t\geq0$, $u_{t}\leq g_{t}+\int_{0}^{t}\Phi\left(t-s\right)u_{s}ds$.
Then, $\sup_{t\in \left[0,T\right]}u_{t}\leq C_{T}\sup_{t\in \left[0,T\right]}g_{t}$,
for some constant $C_{T}$ depending only on $T>0$ and $\Phi$. Moreover, $C_{T}$ can be taken as a non-decreasing function of $T$.
\end{lem}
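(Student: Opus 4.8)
The plan is to treat $T>0$ as fixed and to prove the quantitative bound $\sup_{t\in[0,T]}u_{t}\le C_{T}\sup_{t\in[0,T]}g_{t}$ with an explicit $C_{T}$; the local boundedness hypotheses on $u$ and $g$ guarantee that both suprema are finite on $[0,T]$, so the statement is meaningful, and write $\bar u_{T}:=\sup_{[0,T]}u$, $\bar g_{T}:=\sup_{[0,T]}g$. Since $\Phi$ is locally integrable it lies in $L^{1}([0,T])$, and the single analytic input I would isolate is that the weighted mass $\int_{0}^{T}e^{-\beta r}\Phi(r)\,dr$ can be made arbitrarily small by taking $\beta$ large: indeed $e^{-\beta r}\Phi(r)\to 0$ for a.e.\ $r\in(0,T]$ as $\beta\to\infty$, while it is dominated by $\Phi\in L^{1}([0,T])$, so dominated convergence applies. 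I then fix $\beta=\beta(T)>0$, depending only on $T$ and $\Phi$, such that $q_{T}:=\int_{0}^{T}e^{-\beta r}\Phi(r)\,dr\le 1/2$.

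Next I would multiply the hypothesis $u_{t}\le g_{t}+\int_{0}^{t}\Phi(t-s)u_{s}\,ds$ by $e^{-\beta t}$ and rewrite the convolution term as $\int_{0}^{t}e^{-\beta(t-s)}\Phi(t-s)\,e^{-\beta s}u_{s}\,ds$. Putting $\tilde u_{t}:=e^{-\beta t}u_{t}$, $\tilde g_{t}:=e^{-\beta t}g_{t}$ and $\tilde\Phi(r):=e^{-\beta r}\Phi(r)$, this becomes $\tilde u_{t}\le\tilde g_{t}+\int_{0}^{t}\tilde\Phi(t-s)\tilde u_{s}\,ds$ for all $t\in[0,T]$. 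Taking the supremum over $t\in[0,T]$ and bounding $\int_{0}^{t}\tilde\Phi(t-s)\tilde u_{s}\,ds\le\big(\sup_{[0,T]}\tilde u\big)\int_{0}^{T}\tilde\Phi(r)\,dr=q_{T}\sup_{[0,T]}\tilde u$, one gets $\sup_{[0,T]}\tilde u\le\sup_{[0,T]}\tilde g+q_{T}\sup_{[0,T]}\tilde u$; since $\sup_{[0,T]}\tilde u\le\bar u_{T}<\infty$ this may be rearranged into $\sup_{[0,T]}\tilde u\le(1-q_{T})^{-1}\sup_{[0,T]}\tilde g\le 2\bar g_{T}$, using $\tilde g_{t}\le g_{t}$. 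Undoing the weight, $u_{t}=e^{\beta t}\tilde u_{t}\le 2e^{\beta T}\bar g_{T}$ for every $t\in[0,T]$, so $C_{T}:=2e^{\beta(T)T}$ works. For the monotonicity claim, I note that for $T'\ge T$ the requirement $\int_{0}^{T'}e^{-\beta r}\Phi(r)\,dr\le 1/2$ is strictly more demanding, so the minimal admissible $\beta(T)$ is non-decreasing in $T$, whence $C_{T}=2e^{\beta(T)T}$ is non-decreasing.

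An entirely equivalent route is to iterate the inequality, obtaining after $k$ steps $u_{t}\le\sum_{j=0}^{k-1}(\Phi^{*j}*g)(t)+(\Phi^{*k}*u)(t)$ with the convention $\Phi^{*0}*g:=g$, and then to invoke that $\sum_{k\ge 1}\Phi^{*k}$ converges in $L^{1}([0,T])$ (the resolvent kernel of the Volterra equation); the exponential-weight computation above is precisely a short proof of that summability, since Young's inequality gives $\|\tilde\Phi^{*k}\|_{L^{1}([0,T])}\le q_{T}^{k}$. The only genuine obstacle I expect is the one already flagged: because $\Phi$ is merely locally integrable and not bounded, the naive estimate $\|\Phi^{*k}\|_{L^{1}([0,T])}\le\|\Phi\|_{L^{1}([0,T])}^{k}$ is useless when $\|\Phi\|_{L^{1}([0,T])}\ge 1$, so one must use the dominated-convergence smallness of the weighted mass (equivalently, the continuity of the primitive $r\mapsto\int_{0}^{r}\Phi(s)\,ds$ at $0$) to close the argument; everything else is routine.
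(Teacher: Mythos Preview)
Your proof is correct. The exponential–weight (Laplace shift) argument is clean: the choice of $\beta(T)$ via dominated convergence, the contraction $q_{T}\le 1/2$, and the resulting bound $C_{T}=2e^{\beta(T)T}$ are all sound, and your monotonicity argument for $T\mapsto\beta(T)$ (hence for $C_{T}$) is valid since $\Phi\ge 0$ makes $T\mapsto\int_{0}^{T}e^{-\beta r}\Phi(r)\,dr$ non-decreasing.

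As for comparison with the paper: there is essentially nothing to compare, because the paper does not give a proof. It cites \cite{delattre2016}, Lemma~23, and only adds the remark that the monotonicity of $C_{T}$ ``is a direct consequence of the proof'' found there. The proof in \cite{delattre2016} proceeds by the Picard iteration route you sketch in your last paragraph (iterate the inequality and sum the convolution powers $\Phi^{*k}$), so your primary argument is a genuine alternative: you replace the resolvent–kernel summation by a single weighted contraction. Both approaches hinge on the same analytic fact (the smallness of $\int_{0}^{T}e^{-\beta r}\Phi(r)\,dr$ for large $\beta$, equivalently the summability of $\sum_{k}\tilde\Phi^{*k}$ in $L^{1}$), but yours packages it more directly and yields an explicit $C_{T}$ whose monotonicity is immediate from the construction, whereas in the iteration approach one has to inspect the form of the resolvent bound to extract the monotonicity a posteriori.
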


Note that we added to the first statement that $C_{T}$ can be taken as a non-decreasing function of $T$. It is not given in \cite[Lemma 23]{delattre2016} but it is a direct consequence of the proof.

Then, here is a well-posedness result which is a generalization of \cite[Lemma 24]{delattre2016}.

\begin{lem}\label{lem:solution:expectation:uniqueness}
Let $\Phi :\mathbb{R}\rightarrow \mathbb{R}_{+}$ be Lipschitz-continuous, $h : \mathbb{R}_{+} \rightarrow\mathbb{R}$ be locally integrable and $f_{0}: \mathbb{R}_{+}\rightarrow \mathbb{R}$ be continuous. The equation
\begin{equation} \label{eq:expectation:counting:process}
m_t = \int_0^t \Phi\left(\int_0^{t'} h(t'-z)dm_z + f_{0}(t')\right)dt'
\end{equation}
has a unique locally bounded solution. Furthermore, $m$ is of class $\mathcal{C}^1$
on $\mathbb{R}_{+}$.
\end{lem}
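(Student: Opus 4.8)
The plan is to prove existence and uniqueness of a locally bounded solution of \eqref{eq:expectation:counting:process} by a Picard iteration / fixed point argument on each compact interval $[0,T]$, and then patch the solutions together. Specifically, for fixed $T>0$ I would work in the Banach space $\mathcal{C}([0,T])$ (or $L^\infty([0,T])$) and introduce the map
\[
\mathcal{G}_T(m)(t) := \int_0^t \Phi\left( \int_0^{t'} h(t'-z)\, dm_z + f_0(t') \right) dt',
\]
noting that a locally bounded solution of \eqref{eq:expectation:counting:process} is exactly a fixed point of $\mathcal{G}_T$. One subtlety: since the argument contains $dm_z$, it is cleaner to first observe that any locally bounded solution $m$ is automatically absolutely continuous — indeed $\Phi\ge 0$ is locally bounded along the solution, so $m$ has a locally bounded density — and then rewrite the convolution via integration by parts (Lemma \ref{lem:double:Fubini}) as $\int_0^{t'} h(t'-z)\, dm_z = \int_0^{t'} h(t'-z) m'_z\, dz$ or, after Fubini, as a genuine Stieltjes convolution against $m$ itself. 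Either way one reduces to a standard convolution Volterra equation.

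First I would establish a priori bounds: if $m$ solves \eqref{eq:expectation:counting:process} on $[0,T]$, then writing $\ell := \mathrm{Lip}(\Phi)$ and using $|\Phi(x)| \le |\Phi(0)| + \ell|x|$, I get
\[
|m_t| \le \int_0^t \left( |\Phi(0)| + \ell\left| \int_0^{t'} h(t'-z)\, dm_z + f_0(t') \right| \right) dt',
\]
and after an integration by parts on the inner integral (moving the derivative off $m$ onto $h$, which is only locally integrable, so one keeps it in convolution-against-$m$ form via Lemma \ref{lem:double:Fubini}) this yields $\sup_{[0,t]} |m| \le g(t) + \int_0^t \Phi^*(t-z) \sup_{[0,z]}|m|\, dz$ for a suitable locally bounded $g$ and locally integrable $\Phi^*$ built from $h$ and $f_0$. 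Lemma \ref{lem:Gronwall:Picard} then gives a locally bounded bound on $m$ independent of the particular solution. The same computation applied to the difference $m^{(1)} - m^{(2)}$ of two solutions gives $\sup_{[0,t]}|m^{(1)}-m^{(2)}| \le \ell \int_0^t |h|*(\cdot) $ type estimate and, via the generalized Grönwall lemma again, forces $m^{(1)} = m^{(2)}$, i.e. uniqueness on every $[0,T]$, hence on $\mathbb{R}_+$.

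For existence I would choose $T>0$ small enough that $\ell \int_0^T |h(z)|\, dz \le 1/2$, so that the Lipschitz estimate
\[
\left| \mathcal{G}_T(m^{(1)})(t) - \mathcal{G}_T(m^{(2)})(t) \right| \le \ell \int_0^t |h(t-z)| \left| m^{(1)}_z - m^{(2)}_z \right| dz \le \tfrac12 \| m^{(1)} - m^{(2)} \|_{L^\infty([0,T])}
\]
(obtained after integration by parts, using $m^{(i)}(0)=0$) shows $\mathcal{G}_T$ is a contraction on $\mathcal{C}([0,T])$; the Banach fixed point theorem gives a unique solution on $[0,T]$. Since $T$ depends only on $h$ (not on $f_0$ or the initial data, and the solution stays locally bounded), I iterate: given the solution $W$ on $[0,T]$, restart from $W(T)$ with $f_0$ replaced by $t \mapsto f_0(t+T) + \int_0^T h(\cdot - z)\, dW_z$, exactly as in the proof of Theorem \ref{thm:existence:uniqueness:PPS}, and concatenate to obtain a global locally bounded solution. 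Finally, the $\mathcal{C}^1$ regularity is immediate once existence is known: the integrand $t' \mapsto \Phi\left( \int_0^{t'} h(t'-z)\, dm_z + f_0(t') \right)$ is continuous (continuity of $f_0$, local integrability of $h$, continuity of the convolution, continuity of $\Phi$), so $m_t = \int_0^t (\text{continuous})\, dt'$ is $\mathcal{C}^1$ with $m'_t = \Phi\left( \int_0^t h(t-z)\, dm_z + f_0(t) \right)$.

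The main obstacle is handling the Stieltjes integral $\int_0^{t'} h(t'-z)\, dm_z$ rigorously before one knows $m$ is differentiable: I need to know that the a priori bounds legitimately let me treat $m$ as having a (locally bounded) density so that Fubini/Lemma \ref{lem:double:Fubini} applies cleanly and the contraction estimate goes through. The clean way around this is to note from the outset that any candidate solution $m$ is nondecreasing (since $\Phi \ge 0$) and locally bounded, hence of locally bounded variation with $m(0)=0$, so Lemma \ref{lem:double:Fubini} applies directly to convert $\int_0^{t'} h(t'-z)\, dm_z$ into the ordinary convolution $\int_0^{t'} h(t'-z) m_z\, dz$ only after an extra integration by parts — so really I should phrase the whole fixed point argument in terms of the \emph{cumulative} function $m$ against which $h$ is convolved via Stieltjes integration, and use that the map $\alpha \mapsto (t \mapsto \int_0^t h(t-z)\, d\alpha_z)$ is well-behaved on BV curves. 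This is the same device used throughout Section \ref{sec:study:limit:dynamics}, so it is routine here, but it is the one point where care is needed.
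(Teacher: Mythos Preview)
Your approach is essentially the same as the paper's, which simply refers to \cite[Lemma~24]{delattre2016} and notes that the addition of $f_0$ only requires its local integrability (for local boundedness of the Picard iterates) and its continuity (to propagate $\mathcal{C}^1$ regularity through the iteration). The paper obtains $\mathcal{C}^1$ by induction on the Picard iterates, while you argue it a~posteriori from continuity of the integrand; both are fine.

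There is, however, one genuine technical gap in your contraction estimate. You claim that ``after integration by parts'' one obtains
\[
\bigl|\mathcal{G}_T(m^{(1)})(t)-\mathcal{G}_T(m^{(2)})(t)\bigr| \le \ell \int_0^t |h(t-z)|\,\bigl|m^{(1)}_z-m^{(2)}_z\bigr|\,dz,
\]
but the Lipschitz bound only gives $\ell\int_0^t\bigl|\int_0^{t'}h(t'-z)\,d\alpha_z\bigr|\,dt'$ with $\alpha=m^{(1)}-m^{(2)}$, and Lemma~\ref{lem:double:Fubini} turns $\int_0^t\int_0^{t'}h(t'-z)\,d\alpha_z\,dt'$ (absolute value \emph{outside}) into $\int_0^t h(t-z)\alpha_z\,dz$, not the version with the absolute value \emph{inside}. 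Since $h$ is only locally integrable you cannot integrate by parts inside the inner Stieltjes integral either. The clean fix---which is what \cite{delattre2016} does and the paper implicitly follows---is to observe at the outset that any solution is absolutely continuous and run the fixed point (or the Picard iteration) on the density $\lambda=m'$: the equation becomes $\lambda_t=\Phi\bigl(\int_0^t h(t-z)\lambda_z\,dz+f_0(t)\bigr)$ and the contraction estimate $|\lambda^{(1)}_t-\lambda^{(2)}_t|\le\ell\int_0^t|h(t-z)|\,|\lambda^{(1)}_z-\lambda^{(2)}_z|\,dz$ is then immediate. You essentially say this yourself in your final paragraph; just commit to it from the start and the argument goes through cleanly.
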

\begin{proof}
The proof is similar to \cite[Lemma 24]{delattre2016}. We refrain from reproducing it here; instead, we only indicate the minor changes that are required to make it fit the current framework, i.e. the addition of the function $f_{0}$.
The ``uniqueness'' part is exactly the same.
The ``existence'' part requires $f_{0}$ to be locally integrable in order to have locally boundedness in the Picard iteration.
Finally, we need $f_{0}$ to be continuous to show by induction that at each step of the Picard iteration the function is $\mathcal{C}^{1}$ on $\mathbb{R}_{+}$ and so it is for the limit, that is the solution of~\eqref{eq:expectation:counting:process}.
\end{proof}

Here is given an analytic result which is used to give a uniform bound on the mean intensity of a Hawkes process under stationary conditions.

\begin{lem}\label{lem:boundedness:mean:intensity}
Let $\Phi :\mathbb{R}\rightarrow \mathbb{R}_{+}$ be Lipschitz-continuous and $h : \mathbb{R}_{+} \rightarrow\mathbb{R}$ be integrable such that ${\rm Lip}(\Phi) || h ||_{1}<1$. Moreover, let $f_{0}: \mathbb{R}_{+}\rightarrow \mathbb{R}$ be uniformly bounded. 

If $g:\mathbb{R}_{+}\rightarrow \mathbb{R}_{+}$ is a locally bounded function satisfying
\begin{equation}
g(t) \leq \Phi\left( \int_{0}^{t} h(t-u) g(u)du + f_{0}(t) \right)
\end{equation}
for every $t> 0$, then $g$ is uniformly upper bounded by
$$
M:=\frac{\Phi(0) + {\rm Lip}(\Phi) || f_{0} ||_{\infty}}{1-{\rm Lip}(\Phi) || h ||_{1}}.
$$
\end{lem}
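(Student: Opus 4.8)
The plan is to exploit the Lipschitz bound $\Phi(x)\le \Phi(0)+{\rm Lip}(\Phi)|x|$ together with a supremum argument of Grönwall type. Since $\Phi\ge 0$ and $g\ge 0$, for every $t>0$ one has
\[
g(t)\le \Phi(0)+{\rm Lip}(\Phi)\left| \int_{0}^{t} h(t-u) g(u)\,du + f_{0}(t)\right|
\le \Phi(0)+{\rm Lip}(\Phi)|| f_{0} ||_{\infty}+{\rm Lip}(\Phi)\int_{0}^{t}|h(t-u)|\,g(u)\,du,
\]
where the triangle inequality and the non-negativity of $g$ were used to drop the absolute value inside the integral.

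Next, introduce $G(t):=\sup_{0\le s\le t} g(s)$, which is finite because $g$ is locally bounded, and is non-decreasing in $t$. For any $s\le t$, bounding $g(u)\le G(s)\le G(t)$ inside the convolution and using $\int_{0}^{s}|h(s-u)|\,du=\int_{0}^{s}|h(v)|\,dv\le || h ||_{1}$ gives
\[
g(s)\le \Phi(0)+{\rm Lip}(\Phi)|| f_{0} ||_{\infty}+{\rm Lip}(\Phi)\,|| h ||_{1}\,G(t).
\]
Taking the supremum over $s\in[0,t]$ yields $G(t)\le \Phi(0)+{\rm Lip}(\Phi)|| f_{0} ||_{\infty}+{\rm Lip}(\Phi)\,|| h ||_{1}\,G(t)$.

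Since ${\rm Lip}(\Phi)\,|| h ||_{1}<1$, the term involving $G(t)$ on the right can be absorbed into the left-hand side, which gives $G(t)\bigl(1-{\rm Lip}(\Phi)\,|| h ||_{1}\bigr)\le \Phi(0)+{\rm Lip}(\Phi)|| f_{0} ||_{\infty}$, that is $G(t)\le M$ for every $t\ge 0$; in particular $g(t)\le M$ for all $t$, which is the claim. There is no genuine obstacle in this argument: the only point requiring care is that $G(t)$ must be known to be \emph{finite} before the absorption step is performed, which is precisely why the local boundedness of $g$ is assumed — without it the final inequality would be a vacuous $\infty\le\infty$.
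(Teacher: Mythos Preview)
Your proof is correct and follows essentially the same approach as the paper: apply the Lipschitz bound $\Phi(x)\le \Phi(0)+{\rm Lip}(\Phi)|x|$, take the supremum over $[0,T]$, and absorb the $\sup g$ term using ${\rm Lip}(\Phi)\|h\|_1<1$ and the local boundedness of $g$. The paper's argument is identical in substance, only slightly more terse.
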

\begin{proof}
For any $t> 0$,
$$
g(t) \leq \Phi(0) + {\rm Lip}(\Phi) \left( \int_{0}^{t} |h(t-u)| g(u)du + || f_{0} ||_{\infty} \right)
$$
hence, thanks to the locally boundedness of $g$, for every $T\geq 0$,
$$
\sup_{t\in [0,T]} g(t) \leq \Phi(0) + {\rm Lip}(\Phi) \left( || h ||_{1} \sup_{t\in [0,T]} g(t) + || f_{0} ||_{\infty} \right)
$$
and
$$
\sup_{t\in [0,T]} g(t) \leq \frac{1}{1-{\rm Lip}(\Phi) || h ||_{1}} \left[\Phi(0) + {\rm Lip}(\Phi) || f_{0} ||_{\infty} \right]=M.
$$
\end{proof}

\newpage

\bibliographystyle{abbrv}
{\small \bibliography{references}}

\end{document}